\newtheorem{theorem}{Theorem}[section]
\newtheorem{lemma}[theorem]{Lemma}
\newtheorem{proposition}[theorem]{Proposition}
\newtheorem{corollary}[theorem]{Corollary}
\newtheorem{remark}[theorem]{Remark}
\theoremstyle{definition}
\newtheorem{example}[theorem]{Example}
\theoremstyle{remark}
\numberwithin{equation}{section}
\newcommand\bes{\begin{eqnarray}}
\newcommand\ees{\end{eqnarray}}
\newcommand{\bess}{\begin{eqnarray*}}
\newcommand{\eess}{\end{eqnarray*}}
\newcommand{\ba}{\begin{array}}
\newcommand{\ea}{\end{array}}
\newcommand{\f}{\frac}
\newcommand{\Om}{\Omega}
\newcommand{\var}{\varepsilon}
\newcommand{\lf}{\left}
\newcommand{\rr}{\right}
\newcommand{\dd}{\displaystyle}
\newcommand{\af}{\alpha}
\newcommand{\td}{\tilde}
\newcommand{\wtd}{\widetilde}
\newcommand\yy{\infty}
\newcommand\ttt{t\to\yy}
\newcommand{\ol}{\overline}
\newcommand{\nm}{\nonumber}
\newcommand{\rd}{{\rm d}}
\begin{document}
 \pagestyle{myheadings}


\date{}
\title{ \bf\large{Global stability of nonhomogeneous equilibrium solution for the diffusive Lotka-Volterra competition model}\footnote{Partially supported by a grant from China Scholarship Council, NSF Grant DMS-1715651, and NSFC Grant 11771110.}}
\author{Wenjie Ni\textsuperscript{1}\footnote{Current address: School of Science and Technology, University of New England, Armidale, NSW 2351, Australia},\ \ Junping Shi\textsuperscript{2}\footnote{Corresponding Author.},\ \ Mingxin Wang\textsuperscript{1}
 \\
{\small \textsuperscript{1} School of Mathematics, Harbin Institute of Technology, Harbin, 150001, China.\hfill{\ }}\\
{\small \textsuperscript{2} Department of Mathematics, College of William and Mary,  Williamsburg, VA, 23187-8795, USA.\setcounter{footnote}{-1}{}\footnote{{\it E-mails}: nwj1033159832@163.com (W.-J. Ni), jxshix@wm.edu (J.-P. Shi),
mxwang@hit.edu.cn (M.-X. Wang)}\hfill{\ }}\\}
\maketitle

\vspace{-6mm}
\begin{quote}
\noindent{\bf Abstract.} A diffusive Lotka-Volterra competition model is considered for the combined effect of spatial dispersal and spatial variations of resource on the population persistence and exclusion. First it is shown that in a two-species system in which the diffusion coefficients, resource functions and competition rates are all spatially heterogeneous, the positive equilibrium solution is globally asymptotically stable when it exists. Secondly the existence and  global asymptotic stability of the positive and semi-trivial equilibrium solutions are obtained for the model with arbitrary number of species under the assumption of spatially heterogeneous resource distribution. A new Lyapunov functional method is developed to prove the global stability of a non-constant equilibrium solution in heterogeneous environment.

\noindent
{\bf Keywords:} Diffusive Lotka-Volterra competition model; Spatial heterogeneity; Non-constant equilibrium solutions; Global stability.

\noindent {\bf MSC2000}: 35K51, 35B09, 35B35, 92D25.
\end{quote}

\section {Introduction}{\setlength\arraycolsep{2pt}

The uneven distribution of resources due to the effect of geological and environmental characteristics greatly enriches the diversity of ecosystems. In the past a few decades, the phenomenon of spatial heterogeneity of resources has attracted the attention of many researchers from both biology and mathematics, see \cite{cc2003,levin,lou2006,tilman1994competition}, for example. The dynamical properties (existence, bifurcation and local/global stability of nonconstant steady state) of mathematical models with spatial heterogeneity are more complicated. Simultaneously it brings an additional difficulty to study the global stability of nonconstant equilibrium solutions. 

In this paper we consider the global dynamics of the  diffusive Lotka-Volterra competition model of multi-species in a  nonhomogeneous environment:
\begin{equation}\label{a.1}
\begin{cases}
\dd \partial_t u_i=d_i(x)\Delta u_i+u_i\bigg(m_i(x)-\sum_{j=1}^k a_{ij}(x)u_j\bigg), & x\in \Omega,\; \ t>0,\; 1\leq i\leq k,\\
\dd \partial_\nu u_i=0,& x\in \partial\Omega,\;
 t>0,\; 1\leq i\leq k,\\
u_i(x,0)=\varphi_i(x)\geq, \, \not\equiv 0, & x\in\ol\Omega, \; 1\leq i\leq k,
 \end{cases}
\end{equation}
where $u_i(x,t)$ is the population density of $i$-th biological species, $m_i\in C^{\alpha}(\ol\Omega)$, $i=1,...,k$, represent the densities of non-uniform resources, and the nonnegative function $a_{ij}\in C^{\alpha}(\ol\Omega)$ is the strength of competition for species $u_i$ and $u_j$ at location $x$. Here $d_i(x)>0$ is the diffusion coefficient of $u_i$ at location $x$, the set $\Omega\subset\mathbb{R}^N$ is a bounded domain  with smooth boundary $\partial\Omega\in C^{2+\alpha}$, $\nu$ is the outward unit normal vector over $\partial\Omega$, and the homogeneous Neumann boundary condition indicates that this system is self-contained with zero population flux across the boundary.

If all of $d_i$, $m_i$ and $a_{ij}$ are positive constants (so the environment is spatially homogeneous), the global stability of positive constant equilibrium of \eqref{a.1} had been proved in the weak competition case, see \cite{bp1983, gbs1977} and the references therein. Lyapunov functional methods is used in proving the global stability of positive constant equilibrium of \eqref{a.1} in the homogenous case. However, the spatial heterogeneity of the environment may change the outcome of the competition, and it is an important biological question to understand how the spatially nonhomogeneous environment affects the competition between species. When $k=2$,  $d_i$ are constants, the two species $u_1$ and $u_2$} share the same spatially distributed resource $m_1(x)=m_2(x)$  and have the same competition coefficients $a_{ij}=1$,  it was shown in \cite{dhmp1998} that the species with  smaller diffusion coefficient survives while the other one with larger diffusion coefficient becomes extinct, that is, the slower diffuser prevails. The same question with $k\ge 3$ species remains as an open question. The global dynamics of the two-species case of \eqref{a.1} was recently completely classified for the weak competition regime $a_{11}a_{22} > a_{12}a_{21}$ in \cite{hn2016i,hn2016,hn2017}, assuming  $d_i$ and $a_{ij}$ are constants and $m_i(x)$ are spatially heterogenous. It was shown that  there is always a globally asymptotically stable non-negative equilibrium for the problem, and the dynamics can be completely determined according to the competition strength $a_{ij}$, the diffusion coefficients $d_i$ and heterogeneous resource functions $m_i(x)$ by using linear stability analysis and monotone dynamical system theory.


In this paper we show the global stability of the positive nonconstant equilibrium of \eqref{a.1} with spatially nonhomogeneous  $d_i$, $m_i$ and $a_{ij}$ by using a new Lyapunov functional method. For the two-species case, our result is
\begin{theorem}\label{th1.2} Suppose that $k=2$, and the functions $d_i, m_i,a_{ij}$ satisfy
 \bes\label{a.2}
m_i,a_{ij},d_i\in C^\alpha(\ol\Omega)\ {\rm and} \ d_i(x), m_i(x),a_{ij}(x)>0\ \  {\rm on}\ \ \ol\Omega.
 \ees
Assume that the problem \eqref{a.1} has a positive equilibrium solution $(u_1^*(x),u_2^*(x))$ and there exist $\beta_1>0$, $\beta_2>0$, $\td\beta_1>0$ and  $\td\beta_2>0$
such that
\bes\label{a.3}
\beta_1\leq \frac{u_1^*(x)d_2(x)}{u_2^*(x)d_1(x)}\leq \beta_2,\ \ \td\beta_1\leq \frac{u_1^*(x)}{u_2^*(x)}\leq \td\beta_2,\ \ x\in\ol\Omega.
\ees
Suppose that one of the following conditions holds:
\begin{enumerate}[leftmargin=4em]
\item[$\mathbf{(A_1)}$]\, $\dd\frac{\min_{\ol\Omega}[a_{11}(x)a_{22}(x)]}{\max_{\ol\Omega}a_{12}(x)
 \max_{\ol\Omega}a_{21}(x)}>\sqrt{\frac{\beta_2}{\beta_1}}$;
\item[$\mathbf{\bf(A_2)}$]\, $\dd\min_{\ol\Omega}\frac{a_{11}(x)a_{22}(x)}{a_{12}(x)a_{21}(x)}>\sqrt{\frac{\beta_2}{\beta_1}}$
 when $a_{12}(x)=\lambda a_{21}(x)$ for some constant $\lambda> 0$;
\item[$\mathbf{\bf(A_3)}$]\, $\dd\frac{\min_{\ol\Omega}[{a_{11}(x)a_{22}(x)}/(d_1(x)d_2(x))]}
{\max_{\ol\Omega}[a_{12}(x)/d_1(x)]\max_{\ol\Omega}[a_{21}(x)/d_2(x)]}
>\sqrt{\frac{\td\beta_2}{\td\beta_1}}$; or
 \item[$\mathbf{\bf(A_4)}$]\, $\dd\min_{\ol\Omega}\frac{a_{11}(x)a_{22}(x)}{a_{12}(x)a_{21}(x)}
>\sqrt{\frac{\td\beta_2}{\td\beta_1}}$ when $\dd \frac{a_{12}(x)}{d_1(x)}=\lambda\frac{a_{21}(x)}{d_2(x)}$ for some constant $\lambda>0$.
\end{enumerate}
Then for $i=1,2$, $\dd\lim_{t\to\yy}u_i(x,t)=u_i^*(x)$  in $C^2(\ol\Omega)$, where $(u_1,u_2)$ is the solution of \eqref{a.1} with any initial condition $(\varphi_1,\varphi_2)$ such that $\varphi_i(x)\geq, \, \not\equiv 0$.
  \end{theorem}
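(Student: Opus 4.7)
My plan is to build a weighted Lyapunov functional of the form
\[
V(t) \;=\; \sum_{i=1}^{2} c_i \int_\Omega \frac{u_i^*(x)^2}{d_i(x)}\,\phi\!\left(\frac{u_i(x,t)}{u_i^*(x)}\right) dx, \qquad \phi(s) = s - 1 - \ln s,
\]
where $c_1,c_2>0$ are constants to be chosen according to which of $(A_1)$--$(A_4)$ is in force. Since $\phi\ge 0$ with equality only at $s=1$, we have $V\ge 0$ and $V=0$ iff $u_i\equiv u_i^*$; the strong maximum principle guarantees $u_i(\cdot,t)>0$ for $t>0$ so $V(t)<\infty$ along the flow. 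The spatial weight $(u_i^*)^2/d_i$ is the decisive new ingredient: it is exactly the factor that absorbs, upon integration by parts, every drift term coming from $\nabla u_i^*$ and $\nabla d_i$, leaving a pure gradient-squared dissipation.

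First I would pass to $w_i = u_i/u_i^*$. Using the elliptic equation satisfied by $u_i^*$ and the identity $d_i \Delta w_i + (2 d_i \nabla u_i^* / u_i^*)\cdot\nabla w_i = \frac{d_i}{(u_i^*)^2}\nabla\!\cdot\!((u_i^*)^2 \nabla w_i)$, one finds
\[
\partial_t w_i \;=\; \frac{d_i}{(u_i^*)^2}\,\nabla\!\cdot\!\bigl((u_i^*)^2 \nabla w_i\bigr) \;-\; w_i \sum_{j=1}^{2} a_{ij}\,u_j^*\,(w_j-1),
\]
and $\partial_\nu u_i = \partial_\nu u_i^* = 0$ transfers to $\partial_\nu w_i = 0$. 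Differentiating $V$ using $\phi'(s)=(s-1)/s$, the diffusion contribution telescopes to $\sum_i c_i \int_\Omega \frac{w_i-1}{w_i}\,\nabla\!\cdot\!((u_i^*)^2\nabla w_i)\,dx$, and a single integration by parts collapses this to $-\sum_i c_i \int_\Omega (u_i^*)^2 |\nabla w_i|^2 / w_i^2\, dx \le 0$. The reaction contribution becomes a pointwise quadratic form $-\int_\Omega \sum_{i,j} M_{ij}(x)(w_i-1)(w_j-1)\, dx$ with $M_{ij}(x) = c_i(u_i^*)^2 u_j^* a_{ij}/d_i$.

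Next I would verify that the symmetrised matrix $(M+M^{\top})/2$ can be made pointwise positive definite on $\overline{\Omega}$ by a suitable constant ratio $c_1/c_2$. For $k=2$ this reduces to the scalar inequality
\[
2\sqrt{a_{11}(x)a_{22}(x)} \;\ge\; \rho(x)\,a_{12}(x) + \rho(x)^{-1} a_{21}(x), \qquad \rho(x) = \sqrt{c_1/c_2}\;\sqrt{\frac{u_1^*(x)\,d_2(x)}{u_2^*(x)\,d_1(x)}}.
\]
Under $(A_1)$, the hypothesis $\beta_1\le u_1^*d_2/(u_2^*d_1)\le\beta_2$ confines $\rho$ to a bounded interval determined by $\beta_1,\beta_2$, and AM--GM optimisation of $c_1/c_2$ produces exactly the stated sufficient condition. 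Case $(A_2)$ with $a_{12}=\lambda a_{21}$ collapses the right-hand side to $a_{21}(\lambda\rho+\rho^{-1})$, and an endpoint-equalising choice of $c_1/c_2$ (combined with $s+s^{-1}\le 2s$ for $s\ge 1$) yields the sharper pointwise form. Rewriting $\rho$ as $\sqrt{c_1/c_2}\sqrt{u_1^*/u_2^*}\cdot\sqrt{d_2/d_1}$ and absorbing the diffusivities into the $a_{ij}$ coefficients (so that only $u_1^*/u_2^*$ needs to be controlled by $\tilde\beta_1,\tilde\beta_2$) produces $(A_3)$ and $(A_4)$ by the same AM--GM and endpoint arguments.

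Finally I would close the argument via LaSalle's invariance principle. Standard a priori bounds for competitive systems together with parabolic Schauder estimates yield precompactness of the orbit $\{u(\cdot,t)\}$ in $C^2(\overline{\Omega})$, so the $\omega$-limit set is a nonempty compact invariant set on which $\dot V\equiv 0$. There $\nabla w_i \equiv 0$, so $w_i$ is spatially constant, and the pointwise quadratic form vanishes; the strict inequality in $(A_1)$--$(A_4)$ makes the quadratic form strictly positive definite, forcing $w_1=w_2=1$ and hence $u_i(\cdot,t)\to u_i^*$ in $C^2(\overline{\Omega})$. The main technical obstacle is the third step: identifying $(u_i^*)^2/d_i$ as the unique weight that cancels every unsigned cross-term in the integration by parts, and then engineering a single constant ratio $c_1/c_2$ that secures pointwise positive-definiteness of the quadratic form everywhere on $\overline{\Omega}$ despite the $x$-dependence of all coefficients.
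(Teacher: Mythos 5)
Your functional is in fact \emph{identical} to the paper's: since $\int_{u_i^*}^{u_i}\frac{u_i^*}{d_i}\,\frac{s-u_i^*}{s}\,{\rm d}s=\frac{(u_i^*)^2}{d_i}\,\phi\!\left(\frac{u_i}{u_i^*}\right)$ with $\phi(s)=s-1-\ln s$, your $V$ is the paper's $F$ with $(c_1,c_2)=(1,\xi)$, and your change of variables $w_i=u_i/u_i^*$ followed by one integration by parts is exactly the content of the paper's Lemma \ref{lemma2.3}(i) (the dissipation $\int (u_i^*)^2|\nabla w_i|^2/w_i^2\,{\rm d}x$ equals their $\int u_i^2|\nabla(u_i^*/u_i)|^2\,{\rm d}x$). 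Your pointwise quadratic form, the scalar inequality $2\sqrt{a_{11}a_{22}}>\rho a_{12}+\rho^{-1}a_{21}$, and the AM--GM optimization of $c_1/c_2$ reproduce precisely the paper's choice $\xi=(a_{12}^+/a_{21}^+)\sqrt{\beta_1\beta_2}$ and the conditions $\mathbf{(A_1)}$--$\mathbf{(A_4)}$, so up to this point the two arguments coincide. The only genuine divergence is the endgame. The paper deduces $\int_\Omega(u_i-u_i^*)^2\,{\rm d}x\to 0$ from the differential inequality $F'\le-\psi$ via the uniform parabolic estimates of Theorem \ref{th2.1} and the Barbalat-type calculus Lemma \ref{th2.2}, and then upgrades to $C^2$ convergence by precompactness of the orbit; this route never needs $u_i$ to be bounded away from zero. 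Your LaSalle argument, as stated, has a gap at exactly this point: the standard invariance principle requires $V$ to be continuous (indeed finite) on the closure of the orbit, but $V$ blows up where a component vanishes, and under the hypotheses of Theorem \ref{th1.2} nothing rules out a priori that the $\omega$-limit set touches the extinction boundary (persistence is a consequence of the theorem, not an assumption). The gap is repairable --- boundedness and monotonicity of $V$ exclude uniform extinction along subsequences, and the strong maximum principle applied to complete orbits in the invariant $\omega$-limit set excludes limits vanishing only at some points, so the $\omega$-limit set does lie in the open positive cone --- but these steps must be supplied; alternatively, replacing LaSalle by the integration-of-dissipation argument (the paper's Lemma \ref{th2.2}) closes the proof without any lower bound on the solution.
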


The assumption of existence of a positive equilibrium of \eqref{a.1} in Theorem \ref{th1.2} is not restrictive as the instability of both semitrivial equilibria implies the existence of positive coexistence state from the monotone dynamical system \cite{hess1991,Smith}. On the other hand, our result relies on an \textit{a priori}  estimate \eqref{a.3} of the positive equilibrium solutions. We will show that such estimate can be obtained by using the upper and lower solutions method in some situations (see Corollary \ref{coro3.7}). Our method can also be used to show the global stability of the positive equilibrium of \eqref{a.1} in the degenerate two-species case when one of the diffusion coefficients is zero (see Theorem \ref{thm3.1}).


Next we state the global stability of the nonnegative nonconstant equilibrium solution of \eqref{a.1} with $k\geq 3$ species in a nonhomogeneous environment. Here for convenience we only assume the resource $m_i(x)$ are spatial-dependent,  and all of $a_{ij}$ and $d_i$ are constants. We introduce some notations. Setting $1\leq i_0\leq k$ and
\bes\label{AB}
A=(a_{ij})_{k\times k},\  B=A-I_k,\ A_{0}=(a_{ij})_{i_0\times i_0}\  {\rm and} \ B_{i_0}=A_{i_0}-I_{i_0},
\ees
where $a_{ii}=1$ for $1\leq i\leq k$ and $I_k$, $I_{i_0}$ are the $k\times k$ and $i_0\times i_0$ identity matrices, respectively.
Then we have the following  of the nonnegative nonconstant equilibrium solution of \eqref{a.1}.

\begin{theorem}\label{th1.4}\, Suppose that $k\geq 2$, the functions $d_i, m_i,a_{ij}$ satisfy \eqref{a.2}, and $d_i, a_{ij}$ are constants. Assume that $I_k$, $B$, $I_{i_0}$ and $B_{i_0}$ are defined as in \eqref{AB}.

\begin{enumerate}

\item[{\rm (i)}] If there exists a $k\times k$ diagonal matrix  $Q_1$ with positive constant entries such that
  \bess
  Q_1(I_k-B-\mathbf{c}_1)+(I_k-B-\mathbf{c}_1)^T Q_1\ \ {\rm is\ positive\ definite},
  \eess
  where  the diagonal matrix $\mathbf{c}_1$ will be given by the assumption  $\mathbf{(F_{3})}$  in \S\ref{section4.1}.  Then \eqref{a.1} has a unique positive equilibrium solution $(u_1^*(x),\cdots,u_k^*(x))$, and  $\dd\lim_{t\to\yy}u_i(x,t)=u_i^*(x)$ in $C^2(\ol\Omega)$ for $1\leq i\leq k$.

\item[{\rm (ii)}] If there exist   $k\times k$ and $i_0\times i_0$ diagonal matrices    $Q_2$ and $Q_3$  with positive constant entries, respectively,  such that
  \begin{align}
  &\dd \max_{\ol\Omega}m_i-\sum_{i=1}^{i_0}a_{ij}\underline c_i<0,\ \ \dd\min_{\ol\Omega}m_i-\sum_{i=1}^{i_0}a_{ij}\bar c_i<0,\ \ \forall\;i_0+1\leq i\leq k,\label{1.5}
  \end{align}
and both $Q_2(I_k-B)+(I_k-B)^T Q_2$ and $Q_3(I_{i_0}-B_{i_0}-\mathbf{c}_2)+(I_{i_0}-B_{i_0}-\mathbf{c}_2)^T Q_3$ are positive  definite, where the positive constants $\bar c_i$, $\underline c_i$ for $1\leq i\leq i_0$ and the  diagonal matrix $\mathbf{c}_2$ will be given by the assumption  $\mathbf{(G_2)}$ in \S\ref{s4.2}. Then \eqref{a.1} has a  semitrivial equilibrium solution $(u_1^*(x),,\cdots,u_{i_0}^*(x),0,\cdots,0)$ for which $u_i^*(x)>0$ on $\ol\Omega$, and
  \begin{align*}
 &\dd\lim_{t\to\yy}u_i(x,t)=u_i^*(x)&&{\rm in}\ C^2(\ol\Omega),\ 1\leq i\leq i_0,\\
 &\dd\lim_{t\to\yy}u_i(x,t)=0&&{\rm in}\ C^2(\ol\Omega),\  i_0+1\leq i\leq k.
  \end{align*}
   \end{enumerate}
\end{theorem}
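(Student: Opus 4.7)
The argument is a Lyapunov-functional method followed by LaSalle's invariance principle; parabolic regularity provides relative compactness of orbits of \eqref{a.1} in $C^2(\ol\Omega)$, so the $\omega$-limit set is well defined. First I would secure existence of the target equilibrium. For part~(i), monotone dynamical system theory (\cite{hess1991,Smith}) applies once one uses the hypothesis to verify linear instability of every boundary equilibrium, yielding a strictly positive coexistence state $u^*$. $C^{2+\alpha}$ elliptic regularity and the strong maximum principle then deliver two-sided pointwise bounds and a gradient bound for each $u_i^*$, which are the inputs that the assumption $\mathbf{(F_3)}$ packages into the diagonal matrix $\mathbf{c}_1$. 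For part~(ii) the semitrivial state $(u_1^*,\dots,u_{i_0}^*,0,\dots,0)$ is produced by applying part~(i) to the reduced $i_0$-species subsystem, and the envelopes $\underline c_i\le u_i^*\le\bar c_i$ in $\mathbf{(G_2)}$ are the $L^\infty$ bounds on this reduced equilibrium from the sub- and super-solution method (cf.\ Corollary~\ref{coro3.7}).

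For part~(i) the natural candidate is a Volterra-type relative entropy functional adapted to the non-constant equilibrium,
\[
V_1(u)=\sum_{i=1}^k q_i\int_\Omega\Bigl[u_i-u_i^*(x)-u_i^*(x)\ln\frac{u_i}{u_i^*(x)}\Bigr]dx,
\]
with $q_i>0$ the diagonal entries of $Q_1$. Differentiating along \eqref{a.1}, integrating by parts using the homogeneous Neumann condition on both $u_i$ and $u_i^*$, and substituting the equilibrium identity $m_i=\sum_j a_{ij}u_j^*-d_i\Delta u_i^*/u_i^*$ gives an expression of the form
\[
\dot V_1=-\sum_i q_i d_i\int_\Omega u_i^*\bigl|\nabla\ln(u_i/u_i^*)\bigr|^2 dx-\sum_{i,j}q_i a_{ij}\int_\Omega(u_i-u_i^*)(u_j-u_j^*)\,dx+\mathcal R(u),
\]
where $\mathcal R(u)$ is the heterogeneity residue involving $\nabla u_i^*$ and $\Delta u_i^*$; it vanishes if $u^*$ is constant. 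A weighted Cauchy--Schwarz absorbs $\mathcal R(u)$ into a fraction of the gradient dissipation together with a term $\sum_i q_i(\mathbf{c}_1)_{ii}\int_\Omega(u_i-u_i^*)^2 dx$; after re-grouping, the residual quadratic form in $u-u^*$ is governed by $Q_1(I_k-B-\mathbf{c}_1)+(I_k-B-\mathbf{c}_1)^T Q_1$, whose positive definiteness is exactly the hypothesis. Thus $\dot V_1\le 0$ with equality only at $u^*$, and LaSalle yields both convergence and uniqueness of $u^*$.

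For part~(ii) I would use the hybrid functional
\[
V_2(u)=\sum_{i=1}^{i_0}q_i^{(3)}\int_\Omega\Bigl[u_i-u_i^*-u_i^*\ln\frac{u_i}{u_i^*}\Bigr]dx+\sum_{i=1}^k q_i^{(2)}\int_\Omega u_i\,dx,
\]
whose first sum reproduces the analysis of part~(i) on the $i_0$-species subsystem (giving the $(Q_3,\mathbf{c}_2)$ condition), while the derivative of the second, linear sum is $\sum_i q_i^{(2)}\int_\Omega u_i(m_i-\sum_j a_{ij}u_j)\,dx$, which the positive definiteness of $Q_2(I_k-B)+(I_k-B)^T Q_2$ controls as a quadratic form in $u$. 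The pointwise inequalities \eqref{1.5}, combined with the envelopes $\underline c_j\le u_j^*\le\bar c_j$ from $\mathbf{(G_2)}$, force the invasion exponent $m_i-\sum_{j\le i_0}a_{ij}u_j$ for each $i>i_0$ to be strictly negative on a neighborhood of the semitrivial attractor, driving those species to zero. LaSalle combined with parabolic bootstrap then yields the claimed $C^2$-convergences.

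The principal obstacle is the treatment of the residue $\mathcal R(u)$. In the spatially homogeneous case $\mathcal R\equiv 0$ and the classical matrix criterion $Q_1 A+A^T Q_1>0$ suffices; here $\mathcal R$ is sign-indefinite and couples the gradient dissipation to the reaction quadratic form. The technical heart of the proof is the choice of Cauchy--Schwarz weights ensuring that the residual quadratic is exactly the one appearing in the hypothesis, with $\mathbf{c}_1$ and $\mathbf{c}_2$ expressible in terms of the $L^\infty$ norms of $u_i^*$, $1/u_i^*$, and $|\nabla u_i^*|$. A secondary difficulty specific to part~(ii) is coordinating the two matrix hypotheses (full $k\times k$ for $Q_2$ and reduced $i_0\times i_0$ for $Q_3$) with the invasion inequalities \eqref{1.5} so that both pieces of $V_2$ are dissipated in a single argument.
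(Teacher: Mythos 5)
Your proposal has two genuine gaps, and both concern the ideas that the paper's proof actually turns on. First, the existence step for part (i) cannot be obtained from monotone dynamical system theory: for $k\geq 3$ the competition system \eqref{a.1} is not a monotone system (the paper stresses exactly this point), so "instability of boundary equilibria implies coexistence" is not available. In the paper, existence, the a priori bounds $\underline c_i\leq u_i^*\leq\bar c_i$, and permanence all come from a different mechanism (Proposition \ref{lemma4.4}): the spatially homogeneous comparison ODE system \eqref{4a2}, built from $\max_{\ol\Omega}m_i$ and $\min_{\ol\Omega}m_i$, has a unique positive equilibrium $\mathbf{c}_*$ by $\mathbf{(F_1)}$ which is globally stable by Goh's theorem, and $(\bar c_1,\dots,\bar c_k)$, $(\underline c_1,\dots,\underline c_k)$ then serve as coupled ordered upper--lower solutions of \eqref{4a1}. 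This is also where $\mathbf{c}_1$ comes from: its entries $(\bar c_i-\underline c_i)/\bar c_i$ are determined by $A$ and $\mathbf{m}^{\pm}$ alone, with no reference to $\|\nabla u_i^*\|_\infty$ or $\|1/u_i^*\|_\infty$. Your reinterpretation of $\mathbf{c}_1$, $\mathbf{c}_2$ as quantities "expressible in terms of the $L^\infty$ norms of $u_i^*$, $1/u_i^*$, and $|\nabla u_i^*|$" would, even if carried out, prove a different theorem from the one stated.

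Second, the technical heart that you flag as unresolved is indeed unresolved, and the paper resolves it by a different functional, not by absorbing a residue. With your unweighted entropy $\int_\Omega[u_i-u_i^*-u_i^*\ln(u_i/u_i^*)]\,dx$ and a nonconstant $u_i^*$, the leftover term $(1-u_i^*/u_i)\,\nabla(u_i/u_i^*)\cdot\nabla u_i^*$ is sign-indefinite, and a Cauchy--Schwarz absorption produces a zeroth-order correction of size $\sup_\Omega d_i|\nabla u_i^*|^2/(u_i^*)^3$, which has no reason to equal $(\bar c_i-\underline c_i)/\bar c_i$; so the claim that the residual quadratic form is "exactly" governed by $Q_1(I_k-B-\mathbf{c}_1)+(I_k-B-\mathbf{c}_1)^TQ_1$ is unsubstantiated. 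The paper instead weights the entropy by $u_i^*(x)$, i.e.\ $F(t)=\sum_i\varepsilon_i\int_\Omega\int_{u_i^*}^{u_i}u_i^*(x)\frac{s-u_i^*(x)}{s}\,\rd s\,\rd x$; after subtracting the equilibrium equation, the diffusion contribution becomes $\int_\Omega\frac{u_i^*(u_i-u_i^*)}{u_i}\big(\Delta u_i-\frac{u_i}{u_i^*}\Delta u_i^*\big)\rd x$, which Lemma \ref{lemma2.3} (estimate \eqref{2.6}) shows is $\leq 0$ exactly -- there is no residue at all. The matrix $\mathbf{c}_1$ enters only through the reaction term, via $\underline c_i\leq u_i^*\leq\bar c_i$. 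For part (ii) your division of labor between $Q_2$ and $Q_3$ is also off: in the paper, the $Q_2$ condition \eqref{4.9a} is used (through Corollary \ref{coro4.3} and Lemma \ref{lemma4.7}) on the comparison ODE system \eqref{4a2} to get extinction of $u_i$, $i>i_0$, the asymptotic bounds, and the integrability \eqref{4.16}, which in turn is fed into Lemma \ref{th2.2} as the integrable perturbation $h(t)$; the $Q_3$ condition then controls the weighted entropy of the surviving $i_0$ species. Your hybrid functional with linear terms for all $i$ and an invasion-exponent argument "on a neighborhood of the semitrivial attractor" is circular as stated (you need convergence to know you are in that neighborhood), whereas the paper's extinction inequality \eqref{yy}/\eqref{1.5} is expressed in the constants $\underline c_j,\bar c_j$ and verified at the ODE level before any PDE convergence is known. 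Finally, the convergence itself is concluded from the parabolic estimates of Theorem \ref{th2.1} together with the calculus Lemma \ref{th2.2}, not LaSalle, though that difference is cosmetic compared with the two gaps above.
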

Here $\mathbf{c}_1$ and $\mathbf{c}_2$ depend on the resource density functions $m_i(x)$ for $1\leq i\leq k$, and $\mathbf{c}_1=\mathbf{c}_2=0$ if all of $m_i$ are constants. The condition \eqref{1.5} implies that the species $u_i$ for $i_0+1\leq i\leq k$ will be extinct with  barren resources. The proof of Theorem \ref{th1.4} follows from the ones of Theorems \ref{th4.6} and \ref{th4.11}, and the proof is based on a new Lyapunov functional method, upper-lower solution method and results in \cite{gbs1977}.  We also remark that the global stability results in Theorem \ref{th1.2} and Theorem \ref{th1.4} also hold if the diffusion terms  $\Delta u_i$  are replaced by a divergence form ${\rm div}(a_i(x)\nabla u_i)$  where $a_i(x)\in C^{1+\alpha}(\ol\Omega)$ with $1\leq i\leq k$,  $0<\alpha<1$ are positive functions.

The role of spatial heterogeneity in diffusive two-species competition system \eqref{a.1} have been explored in  many work, see for example \cite{cc1991,cc1993,hn2013i,hn2013ii,hlm2002,hsu1993,ln2012,lou2006} and the references therein, in which  various methods and mathematic tools  have been applied to analysis the existence and stability of the equilibrium solutions. The additional effect of advection on the diffusive two-species competition models have been considered in \cite{lzz2019,zx2018jfa} and the references therein, and the effect of nonlocal competition has been studied in \cite{nsw2018}. Note that the diffusive two-species Lotka-Volterra competition model \eqref{a.1} generates a monotone dynamical system, so the powerful tools from monotone dynamical system theory can be applied \cite{hess1991,Smith}. However, when $k\geq 3$, the monotone dynamical system theory cannot be applied to problem \eqref{a.1}. Our approach here does not rely on the monotone dynamical system methods, and the global stability proved in Theorem \ref{th1.4} for competition models with arbitrary number of species is perhaps the first such result for spatially heterogeneous  models.

A key ingredient of our work here is a new Lyapunov functional method. In \cite{gbs1977,hsu2005}, the global stability of positive equilibrium solution of \eqref{a.1}  for homogeneous environment is proved using Lyapunov functional methods when $d_i, m_i,a_{ij}$ are all constants. The Lyapunov functional there is constructed as $F_1(t)=\dd\int_{\Om} V(u_i(x,t)) {\rm d}x$, where $V(u_i)$ is the Lyapunov function for the ordinary differential equation model, and the equilibrium solution is a constant one. The integral form of the Lyapunov functional can be viewed as an unweighted average of the ODE Lyapunov function on the spatial domain. However this simple construction does not work for the spatially heterogeneous situation, and the equilibrium solution in that case is a non-constant one. In this work, we use a new Lyapunov functional in form of $F_2(t)=\dd\int_{\Om} w_*(x) V(u_i(x,t)) {\rm d}x$, which is a weighted average of the ODE Lyapunov function on the spatial domain, and the weight function $w_*(x)$ depends on the nonhomogeneous functions $d_i, m_i,a_{ij}$ and non-constant equilibrium solution (assuming it exists). Such construction is motivated by the method used in \cite{ls2010jde} for the the global stability of equilibrium solutions of  coupled ordinary differential equation models on networks (which is patchy environment or discrete spatial domain). Such Lyapunov function has also been used in \cite{kw2017} for a diffusive SIR epidemic model. To demonstrate this new method, we first prove the global stability of a non-constant equilibrium solution for a spatially heterogenous diffusive logistic model (see Theorem \ref{coro2.4}). That result is well-known but we give a new proof for the spatial heterogenous case.


This paper is organized as follows. In Section 2, we give some preliminaries, and prove the global stability of a non-constant equilibrium solution for a spatially heterogenous diffusive logistic model. In Section 3, we prove the  the global stability of  the positive equilibrium solution for the  two-species  case of \eqref{a.1}, and in  Section 4 we consider the global stability of  the non-negative equilibrium solution for \eqref{a.1} with arbitrary number of species.

\section {Preliminaries}
{\setlength\arraycolsep{2pt}

When using the Lyapunov functional method to investigate the global stability of equilibrium of reaction-diffusion systems, the uniform estimates of solutions of parabolic equations play an important role.  We first recall the following results on the uniform estimates for the second order parabolic equations. 
Consider the initial-boundary value problem
\bes\lf\{\begin{array}{lll}\label{2.1}
u_t+\mathcal{L}u=f(x,t,u),\ \  & x\in \Omega,&t>0,\\[1mm]
B[u]=0,& x\in \partial\Omega,& t>0,\\[1mm]
u(x,0)=\varphi(x), & x\in \Omega,
 \end{array}\rr.
\ees
where the domain $\Omega\subset\mathbb{R}^n$ is bounded with a smooth boundary $\partial\Omega\in C^{2+\alpha}$, operators $\mathcal{L}$ and $B$ have the forms:
\begin{equation*}
\begin{split}
&\mathcal{L}[u]=-a_{ij}(x,t)D_{ij}u+b_j(x,t)D_j u+c(x,t)u, \\
&B[u]=u,\ {\rm or} \
B[u]=\frac{\partial u}{\partial\nu}+b(x)u,
\end{split}
\end{equation*}
 with $b\in C^{1+\alpha}(\partial\Omega)$,
$0<\alpha<1$  and $b\geq 0$. The initial condition  $\varphi\in W_p^2(\Omega)$,  $p>1+n/2$,  satisfies $B[\varphi]\big|_{\partial\Omega}=0$.

Denote $Q_{\yy}=\Omega\times[0,\yy)$. We make the following assumptions:
 \begin{enumerate}[leftmargin=4em]
 \item[{\rm ($\mathbf{L_1}$)}] $a_{ij},\, b_j,\ c\in C(\ol\Omega\times [0,\yy))$ and there are positive constants $\lambda$ and $\Lambda$ such that
  \bess
  \lambda |y|^2\leq \sum_{1\leq i,j\leq n}a_{ij}(x,t)y_iy_j\leq\Lambda |y|^2,\ \ |b_j(x,t)|,\ |c(x,t)|\leq \Lambda
  \eess
for all $(x,t)\in Q_{\yy},\ y\in \mathbb{R}^n$.
\item [{\rm ($\mathbf{L_2}$)}] For any fixed $m>0$, there exists a positive constant $C(m)$ such that, for all $k\ge 1$,
  \bess
  ||a_{ij}||_{C^{\alpha,\alpha}(\ol\Omega\times[k,k+m])},\  ||b_j||_{C^{\alpha,\alpha}(\ol\Omega\times[k,k+m])},\  ||c||_{C^{\alpha,\alpha}(\ol\Omega\times[k,k+m])}\leq C(m),\ \ \forall\ x\in\ol\Omega.
  \eess
\item[{\rm ($\mathbf{L_3}$)}] $f(x,0,0)=0$ on $\partial \Omega$ when
$B[u]=u$, $f\in L^{\yy}(Q_\yy\times [\sigma_1,\sigma_2])$ for some $\sigma_1<\sigma_2$ and there exists $C(\sigma_1,\sigma_2)>0$ such that
\bess
|f(x,t,u)-f(x,t,v)|\leq C(\sigma_1,\sigma_2)|u-v|,\ \ \forall\ (x,t)\in Q_{\yy},\ u,v\in [\sigma_1,\sigma_2]
\eess
 and $f(\cdot,u)\in C^{\alpha,\alpha/2}(\ol\Omega\times[h,h+3])$ uniformly for $u\in[\sigma_1,\sigma_2]$ and $h\geq 0$, i.e., there exists a constant $C>0$ so that
\bess
|f(x,t,u)-f(y,s,u)|\leq C(|x-y|^\alpha+|t-s|^{\alpha/2})
\eess
for all $(x,t),\ (y,s)\in\ol\Omega\times[h,h+3]$, $u\in[\sigma_1,\sigma_2]$ and $h\geq 0$.
\end{enumerate}

Under the above assumptions, we have the following boundedness result for a globally defined solution $u(x,t)$ of \eqref{2.1}.
\begin{theorem}\label{th2.1}
Let $u(x,t)$ be a solution of \eqref{2.1} and $\sigma_1<u<\sigma_2$ for some $\sigma_1,\,\sigma_2\in \mathbb{R}$. Assume that $f$ satisfies {\rm ($\mathbf{L_3}$)} for these $\sigma_1,\,\sigma_2$. If $a_{ij},\, b_j$ and  $c$ satisfy the assumption {\rm ($\mathbf{L_1}$)}, then, for any given $m>0$, there is a constant $C_1(m)>0$ such that
\bess
\|u\|_{W_p^{2,1}(\ol\Omega\times [\tau,\tau+m])}\leq C_1(m),\ \ \ \forall\ \tau\ge 1.
\eess
In addition, if $a_{ij},\, b_j$ and  $c$ satisfy the assumption {\rm ($\mathbf{L_2}$)}, then, for any given $\tau\ge 1$, there is a constant $C_2(\tau)>0$ such that
\bess
\max_{x\in\ol\Omega}\|u_t(x,\cdot)\|_{ C^{\alpha/2}([\tau,\yy))}+\max_{t\geq\tau}\|u_t(\cdot,t)\|_{C(\ol\Omega)}
+\max_{t\geq\tau}\|u(\cdot,t)\|_{C^{2+\alpha}(\ol\Omega)}\leq C_2(\tau).
\eess
\end{theorem}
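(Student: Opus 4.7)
The strategy is to treat $u$ as a solution of a linear parabolic equation with a frozen, bounded right-hand side, and to bootstrap regularity via an $L^p$ estimate followed by a Schauder estimate. Throughout, the key observation is that the basic parabolic interior-in-time estimates are translation invariant in $t$, so once we start the time window at $\tau\geq 1$ (away from the initial slice), the constants will depend only on $m$ and on the structural data, not on $\tau$. Since $\sigma_1<u<\sigma_2$ on $Q_\yy$ and (L$_3$) gives $f\in L^\yy(Q_\yy\times[\sigma_1,\sigma_2])$, the function $F(x,t):=f(x,t,u(x,t))$ lies in $L^\yy(Q_\yy)$ with a bound depending only on $\sigma_1,\sigma_2$, and $u$ solves the linear equation $u_t+\mathcal{L}u=F(x,t)$ with boundary condition $B[u]=0$.

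For the first bound, I would fix $\tau\geq 1$, enlarge the window to $[\tau-1,\tau+m]$, and apply the standard interior-in-time $L^p$ estimate for uniformly parabolic equations of the second order under the boundary operator $B$ (e.g.\ the Solonnikov / Ladyzhenskaya--Solonnikov--Ural'tseva estimate). Assumption (L$_1$) supplies uniform ellipticity and $L^\yy$ bounds on the coefficients, so we obtain
\begin{equation*}
\|u\|_{W_p^{2,1}(\Omega\times[\tau,\tau+m])}\leq C\bigl(\|F\|_{L^p(\Omega\times[\tau-1,\tau+m])}+\|u\|_{L^p(\Omega\times[\tau-1,\tau+m])}\bigr)
\end{equation*}
with $C=C(m,\Omega,p,\lambda,\Lambda)$. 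Both $\|F\|_{L^\yy}$ and $\|u\|_{L^\yy}$ are controlled by $\sigma_1,\sigma_2$ uniformly in $\tau$, so this yields the desired constant $C_1(m)$.

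For the second bound, the crucial step is to upgrade $u$ from $W_p^{2,1}$ to Hölder regularity. Since $p>1+n/2$, the parabolic Sobolev embedding $W_p^{2,1}\hookrightarrow C^{\alpha,\alpha/2}$ gives $\|u\|_{C^{\alpha,\alpha/2}(\ol\Omega\times[\tau,\tau+m])}\leq C_1'(m)$, uniformly in $\tau\geq 1$. Combined with assumption (L$_3$) (Hölder in $(x,t)$ uniformly in $u$ and Lipschitz in $u$), the composite $F(x,t)=f(x,t,u(x,t))$ is then uniformly Hölder in $(x,t)$ on every window of length $m$. Under (L$_2$), the coefficients $a_{ij},b_j,c$ are uniformly Hölder on such windows as well, so the classical parabolic Schauder estimate up to the boundary (taking advantage of $\partial\Omega\in C^{2+\alpha}$ and $b\in C^{1+\alpha}(\partial\Omega)$) applied on $[\tau-1,\tau+m]$ yields
\begin{equation*}
\|u\|_{C^{2+\alpha,1+\alpha/2}(\ol\Omega\times[\tau,\tau+m])}\leq C_2(\tau),
\end{equation*}
which directly implies the $C^{2+\alpha}$ bound on $u(\cdot,t)$ and the $C^{\alpha/2}$ bound on $u_t$.

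The main technical obstacle is handling the boundary contribution correctly: both the $L^p$ and the Schauder estimates are "up-to-the-boundary" statements that require the structural compatibility built into the hypotheses ($\partial\Omega\in C^{2+\alpha}$, $b\in C^{1+\alpha}(\partial\Omega)$ with $b\geq 0$, and $B[\varphi]|_{\partial\Omega}=0$). A second subtlety is the propagation of the $\tau$-uniformity: one cannot use an estimate whose constants depend on a $W_p^2$ or $C^{2+\alpha}$ norm of the initial slice at $t=\tau$, since that would blow up with $\tau$; instead one must invoke the interior-in-time version, which absorbs the dependence on the starting slice into an $L^p$ norm on the enlarged window $[\tau-1,\tau+m]$, and this norm is controlled solely by the a priori bound $\sigma_1<u<\sigma_2$.
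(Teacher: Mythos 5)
Your proposal is correct and is essentially the argument the paper intends: the paper itself gives no proof of Theorem \ref{th2.1}, only pointing to \cite[Theorem 2.1]{wmx2016} and \cite[Theorem 2.2]{WZhang18}, and those proofs follow exactly your bootstrap — freeze $F(x,t)=f(x,t,u(x,t))\in L^\yy$, apply the interior-in-time $L^p$ estimate on enlarged windows $[\tau-1,\tau+m]$ to get a $\tau$-uniform $W^{2,1}_p$ bound, upgrade to $C^{\alpha,\alpha/2}$ by the parabolic Sobolev embedding (using $p>1+n/2$), and then invoke the up-to-the-boundary Schauder estimate under $(\mathbf{L_2})$, patching the windowed bounds to cover $[\tau,\yy)$. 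Your handling of the two key subtleties (working away from the initial slice so constants are translation invariant in $t$, and avoiding any dependence on higher norms of the slice at $t=\tau$) matches the cited arguments, so no gap.
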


For the idea of  proof to Theorem \ref{th2.1}, the interested readers can  refer to the proofs of \cite[Theorem 2.1]{wmx2016} and \cite[Theorem 2.2]{WZhang18} for the details. We also recall the following calculus lemma which will be used to prove  the global stability of equilibrium solution.

\begin{lemma}{\rm(\hspace{-.1mm}\cite[Theorem 1.1]{wmx2018})}\label{th2.2} Let $\delta,\ c > 0$ be positive constants, $\psi(t)\geq0$ in $[0,\yy)$ and $\int_{\delta}^{\yy} h(t)dt<\yy$. Assume that $\varphi\in C^1([0,\yy))$ is bounded from below and satisfies
\bess
\varphi'(t)\leq-c\psi(t)+h(t)\ \ \ \text{in}\, \ [\delta,\yy).
\eess
If either $\psi\in C^1([\delta,\yy))$  and $\psi'(t)\leq K$ in $[\delta,\yy)$ for some constant $K >0$, or $\psi\in C^{\gamma}([\delta,\yy))$ and
$\|\psi\|_{C^{\gamma}([\delta,\yy))}\leq K$ for some constants $0<\gamma <1$ and $K > 0$, then $\dd\lim_{t\to\yy}\psi(t)=0$.
\end{lemma}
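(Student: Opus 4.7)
The plan is a two-step argument: first establish that $\psi$ is absolutely integrable on $[\delta,\yy)$, and then upgrade this integrability to pointwise decay by a Barbalat-type equicontinuity argument based on the regularity hypothesis on $\psi$.

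For the first step, I would integrate the differential inequality $\varphi'(t)\leq-c\psi(t)+h(t)$ from $\delta$ to an arbitrary $T>\delta$, obtaining
\bess
c\int_\delta^T\psi(s)\,\rd s\leq \varphi(\delta)-\varphi(T)+\int_\delta^T h(s)\,\rd s.
\eess
Since $\varphi$ is bounded from below and $\int_\delta^\yy h(s)\,\rd s<\yy$, the right-hand side is bounded uniformly in $T$, so letting $T\to\yy$ yields $\int_\delta^\yy\psi(s)\,\rd s<\yy$.

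For the second step, suppose for contradiction that $\psi(t)\not\to 0$. Then there exist $\ep_0>0$ and a sequence $t_n\to\yy$ with $\psi(t_n)\geq\ep_0$. The regularity hypothesis produces a uniform $\eta>0$ such that $\psi(t)\geq\ep_0/2$ throughout each interval $[t_n-\eta,\,t_n]$. In the $C^1$ case, the bound $\psi'\leq K$ gives $\psi(t_n)-\psi(t)=\int_t^{t_n}\psi'(s)\,\rd s\leq K(t_n-t)$ for $t\leq t_n$, so $\eta=\ep_0/(2K)$ suffices. In the $C^\ga$ case, $|\psi(t)-\psi(t_n)|\leq K|t-t_n|^\ga$, so one can take $\eta=(\ep_0/(2K))^{1/\ga}$. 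Passing to a subsequence with $t_{n+1}-t_n\geq 2\eta$, the intervals $[t_n-\eta,\,t_n]$ are pairwise disjoint and each contributes at least $\ep_0\eta/2$ to the integral of $\psi$, forcing $\int_\delta^\yy\psi(s)\,\rd s=\yy$ and contradicting step one.

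The one subtlety to bear in mind is the asymmetry of the $C^1$ hypothesis: only an upper bound $\psi'(t)\leq K$ is assumed, not $|\psi'(t)|\leq K$, so one must use this bound to control $\psi$ \emph{backward} in time from $t_n$, where $\psi$ is already known to be large, rather than forward. Aside from this, the argument is entirely classical and I do not anticipate any substantial obstacle; the two cases differ only in how the modulus of continuity is quantified.
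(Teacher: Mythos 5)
Your argument is correct. Note that the paper does not prove Lemma \ref{th2.2} at all --- it is quoted directly from \cite[Theorem 1.1]{wmx2018} --- and your two-step proof (integrating the differential inequality against the lower bound on $\varphi$ and the convergence of $\int_\delta^\yy h$ to get $\int_\delta^\yy\psi\,\rd t<\yy$, then the Barbalat-type contradiction in which the one-sided bound $\psi'\le K$ is used to control $\psi$ backward in time from the points $t_n$ where $\psi$ is large) is exactly the standard argument behind that cited result, with the asymmetry of the $C^1$ hypothesis handled correctly.
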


In order to use the Lyapunov functional method to study the global asymptotic stability of spatially nonhomogeneous positive equilibrium solutions, we should give some basic integral estimates. Given two functions $\Phi(x,u)$ and $g(x,u)$, we make the following assumptions:
 \begin{enumerate}[leftmargin=4em]
\item[$\mathbf{(H_1)}$] $\Phi \in C^{2,2}(\ol\Omega\times [0,\yy))$, $\Phi(x,0)=0$ and $\Phi_u(x,u)>0$ for $x\in\ol\Omega$ and $u>0$.
\item[$\mathbf{(H_2)}$] $g\in C^{0,1}(\partial\Omega\times [0,\yy))$, and for any $x\in\partial\Omega$, the function $\dd\frac{g(x,u)}{\Phi(x,u)}$  is nonincreasing for $u\in [0,\yy)$.
\end{enumerate}
Then we have the following integral estimates.
\begin{lemma}\label{lemma2.3}
Let $w,\,w_*\in C^{2}(\ol\Omega)$ be two positive functions and let $a\in C^1(\ol\Omega)$, $c\in C^2(\ol\Omega)$ be nonnegative functions.
\begin{enumerate}
\item[{\rm (i)}] Suppose that $\beta\geq 1$ is a constant, $\Phi(x,\tau)=c(x)\tau$, and the functions  $g$ and $\Phi$ satisfy $\mathbf{(H_2)}$.  If $\dd\f{\partial (c(x)w)}{\partial\nu}=g(x,w)$ and $\dd\f{\partial (c(x)w_*)}{\partial\nu}=g(x,w_*)$ on $\partial\Omega$, then
\bes\label{2.6}
 &&\int_{\Omega}\frac{c(x)w_*[w^\beta-w_*^\beta]}{w^\beta}\bigg({\rm div}\{a(x)\nabla [c(x)w]\}-\frac{w}{w_*}{\rm div} \{a(x)\nabla [c(x)w_*]\}\bigg){\rm d}x\nonumber\\[.5mm]
 &\leq&-\int_{\Omega} \beta ac^2w^2\bigg(\frac{w_*}{w}\bigg)^{\beta-1}\Big|\nabla \frac{w_*}{w}\Big|^2{\rm d}x\leq0.
 \ees

\item[{\rm (ii)}] Suppose that $g$ and $\Phi$ satisfy $\mathbf{(H_1)}$ and $\mathbf{(H_2)}$.  If  $\dd\f{\partial\Phi(x,w)}{\partial\nu}=g(x,w)$ and $\dd\f{\partial\Phi(x,w_*)}{\partial\nu}=g(x,w_*)$ on $\partial\Omega$, then
\begin{align}\label{2.5}
 &\int_{\Omega}\frac{\Phi(x,w_*)[\Phi(x,w)-\Phi(x,w_*)]}{\Phi(x,w)}\bigg({\rm div}[a(x)\nabla \Phi(x,w)]-\frac{\Phi(x,w)}{\Phi(x,w_*)}{\rm div} [a(x)\nabla \Phi(x,w_*)]\bigg){\rm d}x \nonumber\\[1.5mm]
 \leq&-\int_{\Omega}  a(x)[\Phi(x,w)]^2\Big|\nabla \frac{\Phi(x,w_*)}{\Phi(x,w)}\Big|^2{\rd}x\leq 0.
\end{align}
\end{enumerate}
\end{lemma}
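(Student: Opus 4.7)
The plan is to treat both parts by the same two-step recipe: integrate the left-hand side by parts, split the result into an interior piece and a boundary piece, check that the interior piece equals (not merely bounds) the expression appearing on the right of the inequality, and then verify with the help of $\mathbf{(H_2)}$ that the boundary piece is nonpositive.

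For part (ii), set $P=\Phi(x,w)$ and $P_*=\Phi(x,w_*)$. Writing the LHS of \eqref{2.5} as
$$\int_\Omega \varphi_1\,{\rm div}(a\nabla P)\,{\rm d}x - \int_\Omega \varphi_2\,{\rm div}(a\nabla P_*)\,{\rm d}x,\quad \varphi_1=P_*-\frac{P_*^2}{P},\ \ \varphi_2=P-P_*,$$
Green's formula converts the bulk part to $-\int_\Omega a(\nabla\varphi_1\!\cdot\!\nabla P - \nabla\varphi_2\!\cdot\!\nabla P_*)\,{\rm d}x$. Expanding $\nabla\varphi_1,\nabla\varphi_2$ and using $\nabla(P_*/P)=(P\nabla P_*-P_*\nabla P)/P^2$, a short computation yields the pointwise identity
$$-\nabla\varphi_1\!\cdot\!\nabla P + \nabla\varphi_2\!\cdot\!\nabla P_* \;=\; -P^2\Big|\nabla(P_*/P)\Big|^2,$$
which is exactly the bulk integrand on the right of \eqref{2.5}. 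For part (i), apply the same recipe with $P=cw$, $P_*=cw_*$, $q=w_*/w=P_*/P$, $\alpha_1=P_*(1-q^\beta)$, $\alpha_2=P(1-q^\beta)$; the parallel manipulation delivers
$$-\nabla\alpha_1\!\cdot\!\nabla P + \nabla\alpha_2\!\cdot\!\nabla P_* \;=\; -\beta P^2 q^{\beta-1}|\nabla q|^2,$$
which matches the RHS of \eqref{2.6} after undoing the substitutions $P=cw$, $q=w_*/w$. These are routine algebraic identities once $q$ is introduced.

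The step that actually uses the hypotheses is the boundary analysis. Because of the Neumann conditions $\partial_\nu\Phi(x,w)=g(x,w)$ and $\partial_\nu\Phi(x,w_*)=g(x,w_*)$, the boundary piece in (ii) reduces after factoring to
$$\int_{\partial\Omega} a\,P_*(P-P_*)\left[\frac{g(x,w)}{\Phi(x,w)}-\frac{g(x,w_*)}{\Phi(x,w_*)}\right]{\rm d}S.$$
By $\mathbf{(H_1)}$ the map $u\mapsto\Phi(x,u)$ is strictly increasing, so $P-P_*$ has the same sign as $w-w_*$; by $\mathbf{(H_2)}$ the bracketed difference has the opposite sign. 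Hence the integrand is $\leq 0$ pointwise on $\partial\Omega$, the boundary contribution is $\leq 0$, and \eqref{2.5} follows from the identity above. The boundary term for (i) has exactly the same structure, with $(w^\beta-w_*^\beta)$ (which shares the sign of $w-w_*$ on the positive axis) playing the role of $(P-P_*)$, so the monotonicity argument applies verbatim. The only subtle point is therefore the sign of the boundary integral, which is precisely what hypothesis $\mathbf{(H_2)}$ is tailored to supply.
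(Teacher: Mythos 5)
Your proof is correct and takes essentially the same route as the paper: a Green's-formula integration by parts in which the interior contribution equals exactly the nonpositive quadratic term and the boundary contribution is nonpositive by the monotonicity of $g/\Phi$ in $\mathbf{(H_2)}$ (combined with the monotonicity of $\Phi$ from $\mathbf{(H_1)}$ in part (ii)). The only difference is bookkeeping: the paper first merges the two divergence terms into the single divergence ${\rm div}\big[a\big(\Phi(x,w_*)\nabla \Phi(x,w)-\Phi(x,w)\nabla \Phi(x,w_*)\big)\big]$ and integrates by parts once, whereas you integrate each term by parts separately and verify the pointwise algebraic identity for the sum, arriving at the same boundary and interior terms.
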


\begin{proof} (i) It follows from Green's Theorem that
\begin{equation*}
\begin{split}
&\int_{\Omega}\frac{c(x)w_*[w^\beta-w_*^\beta]}{w^\beta}\bigg({\rm div}\{a(x)\nabla [c(x)w]\}-\frac{w}{w_*}{\rm div} \{a(x)\nabla [c(x)w_*]\}\bigg){\rm d}x\\[.5mm]
=&\int_{\Omega}\frac{w^\beta-w_*^\beta}{w^\beta}\big[cw_*{\rm div}(a\nabla (cw))-cw\,{\rm div}(a\nabla (cw_*))\big]{\rd}x\\[.5mm]
=&\int_{\Omega}\frac{w^\beta-w_*^\beta}{w^\beta}\text{div}(acw_*\nabla (cw)-acw\nabla (cw_*)){\rd}x\\[.5mm]
=&\int_{\partial\Omega} \frac{w^\beta-w_*^\beta}{w^\beta}\bigg(acw_*\frac{\partial (cw)}{\partial \nu}-acw\frac{\partial (cw_*)}{\partial \nu}\bigg){\rd}S\\[.5mm]
 &-\int_{\Omega}\bigg(\nabla\frac{w^\beta-w_*^\beta}{w^\beta}\bigg)a(cw_*\nabla (cw)-cw\nabla (cw_*)) {\rd}x\\[.5mm]
 =&\int_{\partial\Omega} \frac{w^\beta-w_*^\beta}{w^\beta}acww_*\bigg(\frac{g(x,w)}{w}
 -\frac{g(x,w_*)}{w_*}\bigg){\rd}S
 -\int_{\Omega}ac^2w^2\nabla\frac{w_*^\beta}{w^\beta}\cdot\nabla\frac{cw_*}{cw}{\rd}x\\[.5mm]
\leq&-\int_{\Omega} \beta ac^2w^2\bigg(\frac{w_*}{w}\bigg)^{\beta-1}\Big|\nabla \frac{w_*}{w}\Big|^2{\rd}x\leq 0.
\end{split}
\end{equation*}

(ii) Similar to the above computation we have
\begin{equation*}
\begin{split}
&\int_{\Omega}\frac{\Phi(x,w_*)[\Phi(x,w)-\Phi(x,w_*)]}{\Phi(x,w)}\bigg({\rm div}[a\nabla \Phi(x,w)]-\frac{\Phi(x,w)}{\Phi(x,w_*)}{\rm div} [a\nabla \Phi(x,w_*)]\bigg){\rm d}x\\[.5mm]
=&\int_{\Omega}\frac{\Phi(x,w)-\Phi(x,w_*)}{\Phi(x,w)}\text{div}[a\Phi(x,w_*)\nabla \Phi(x,w)-a\Phi(x,w)\nabla \Phi(x,w_*)]{\rd}x\\[.5mm]
=&\int_{\partial\Omega} \frac{\Phi(x,w)-\Phi(x,w_*)}{\Phi(x,w)}\bigg(a\Phi(x,w_*)\frac{\partial \Phi(x,w)}{\partial \nu}-a\Phi(x,w)\frac{\partial \Phi(x,w_*)}{\partial \nu}\bigg){\rd}S\\[.5mm]
 &-\int_{\Omega}\bigg(\nabla\frac{\Phi(x,w)-\Phi(x,w_*)}{\Phi(x,w)}\bigg)a[\Phi(x,w_*)\nabla \Phi(x,w)-\Phi(x,w)\nabla \Phi(x,w_*)]{\rd}x\\[.5mm]
 =&\int_{\partial\Omega} \frac{\Phi(x,w)-\Phi(x,w_*)}{\Phi(x,w)}a\Phi(x,w)\Phi(x,w_*)\bigg(\frac{ g(x,w)}{\Phi(x,w)}-\frac{ g(x,w_*)}{\Phi(x,w_*)}\bigg){\rd}S\\[.5mm]
 &-\int_{\Omega}a[\Phi(x,w)]^2\Big|\nabla \frac{\Phi(x,w_*)}{\Phi(x,w)}\Big|^2{\rd}x
 \leq  -\int_{\Omega}a[\Phi(x,w)]^2\Big|\nabla \frac{\Phi(x,w_*)}{\Phi(x,w)}\Big|^2{\rd}x \leq 0.
\end{split}
\end{equation*}
The proof is finished.
\end{proof}

Next, making use of the results in Lemma \ref{lemma2.3}, we study the following  scalar parabolic equation
\bes\lf\{\begin{array}{lll}\label{2.8}
	u_t=d(x)\Delta u+u[m(x)-\varphi(x)u],\ \  & x\in \Omega,& t>0,\\[1.5mm]
	\dd \partial_\nu u=0,& x\in \partial\Omega,&t>0,\\[1.5mm]
	u(x,0)=u_0(x)\geq\not\equiv 0, & x\in \Omega,
\end{array}\rr.
\ees
where $d$, $m$ and $\varphi$ satisfy
\bes\label{2.6aa}
d, m,\varphi\in C^\alpha(\ol\Omega), \ d(x)>0,\  \int_{\Omega} \frac{m(x)}{d(x)}dx\geq 0,\  m(x)\not\equiv 0\ {\rm and}\ \varphi(x)>0,\ \ \ x\in\ol \Omega.
\ees
Let $\theta_{d,m,\varphi}$ be the unique positive solution of
\begin{equation}\label{3a}
	\begin{cases}
		d\Delta \theta+\theta[m(x)-\varphi(x)\theta]=0, & x\in \Omega,\\[1mm]
		\dd \partial_\nu \theta=0,& x\in \partial\Omega.
	\end{cases}
\end{equation}
The existence of $\theta_{d,m,\varphi}$  follows  from  \cite[Proposition 3.2]{cc2003} and \cite[Proposition 2.2]{hn2013i}, and the uniqueness of  $\theta_{d,m,\varphi}$ is a consequence of \cite[Proposition 3.3]{cc2003}.

When $d(x)\equiv 1$, the global stability of $\theta_{d,m,\varphi}$ with respect to \eqref{2.8} has been shown in \cite[Proposition 3.2]{cc2003} by using the Lyapunov functional
\bess
V(u)=\int_{\Omega} \lf(\frac{1}{2}|\nabla u|^2-F(x,u)\rr){\rm d}x,
\eess
where $F(x,u)=\dd\int_0^u r[m(x)-r\varphi(x)]{\rm d}r$, and the uniqueness of  $\theta_{d,m,\varphi}$. Here we consider a more general case that $d(x)>0$ on $\ol\Omega$, and we use a different Lyapunov functional to prove the global stability of $\theta_{d,m,\varphi}$  with respect to \eqref{2.8}.

\begin{theorem}\label{coro2.4}
Assume that $u_0(x)\geq,\not\equiv 0$.  If $d$, $m$ and $\varphi$ satisfy \eqref{2.6aa},  then the  problem \eqref{2.8} has a unique positive solution $u(x,t)$, and $\dd\lim_{t\to\yy}u(x,t)=\theta_{d,m,\varphi}(x)$ in  $C^{2}(\ol\Omega)$.
\end{theorem}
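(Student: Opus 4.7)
The plan is to construct a weighted Lyapunov functional adapted to the spatial heterogeneity, differentiate along the flow, and extract convergence using the calculus Lemma \ref{th2.2} together with the uniform estimates of Theorem \ref{th2.1}. First I would establish the required a priori bounds. Existence, uniqueness and positivity of the solution $u(x,t)$ follow from standard parabolic theory and the strong maximum principle. Comparison with the logistic ODE $z'=z(\max_{\ol\Omega}m-\min_{\ol\Omega}\varphi\cdot z)$ gives a uniform upper bound for $t\geq 1$, and a standard subsolution argument using an eigenfunction of $d\Delta+m$ produces a uniform positive lower bound on $\ol\Omega\times[1,\yy)$. Theorem \ref{th2.1} then supplies uniform bounds for $\|u(\cdot,t)\|_{C^{2+\alpha}(\ol\Omega)}$ and $\|u_t(\cdot,t)\|_{C(\ol\Omega)}$, together with H\"older-in-time control of $u_t$, on $[1,\yy)$.

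Set $\theta=\theta_{d,m,\varphi}$. Guided by the classical ODE Lyapunov function $V(u,\theta)=u-\theta-\theta\ln(u/\theta)\geq 0$, I introduce the weighted functional
\begin{equation*}
E(t)=\int_\Omega \frac{\theta(x)}{d(x)}\bigg(u(x,t)-\theta(x)-\theta(x)\ln\frac{u(x,t)}{\theta(x)}\bigg)\rd x\geq 0.
\end{equation*}
Differentiating along the solution, substituting $m-\varphi u=-d\Delta\theta/\theta-\varphi(u-\theta)$ from \eqref{3a}, and collecting terms yields
\begin{equation*}
E'(t)=\int_\Omega \frac{\theta(u-\theta)}{u}\bigg(\Delta u-\frac{u}{\theta}\Delta\theta\bigg)\rd x-\int_\Omega \frac{\theta\varphi(u-\theta)^2}{d}\rd x.
\end{equation*}
Lemma \ref{lemma2.3}(ii), applied with $a\equiv 1$, $\Phi(x,w)=w$, $g\equiv 0$ (the Neumann boundary conditions on $u$ and $\theta$ give $\partial_\nu\Phi=0$), $w=u$, $w_*=\theta$, bounds the first integral by $-\int_\Omega u^2|\nabla(\theta/u)|^2\rd x$, so
\begin{equation*}
E'(t)\leq -\int_\Omega u^2\bigg|\nabla\frac{\theta}{u}\bigg|^2\rd x-\int_\Omega \frac{\theta\varphi(u-\theta)^2}{d}\rd x=:-\psi(t)\leq 0.
\end{equation*}

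Because $E(t)\geq 0$ is nonincreasing, it has a finite limit and $\int_1^\yy\psi(t)\rd t<\yy$. The uniform Schauder estimates from Theorem \ref{th2.1} and the uniform positive lower bound on $u$ imply that $\psi$ is uniformly H\"older on $[1,\yy)$, so Lemma \ref{th2.2} gives $\psi(t)\to 0$. Consequently $\int_\Omega(u-\theta)^2\rd x\to 0$, and combined with equicontinuity of $\{u(\cdot,t)\}_{t\geq 1}$ in $C^{2+\alpha}(\ol\Omega)$ this upgrades via Arzel\`a--Ascoli to $u(\cdot,t)\to\theta$ in $C^2(\ol\Omega)$. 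The main obstacle is choosing the correct weight: the unweighted functional leaves mismatched diffusion coefficients that Lemma \ref{lemma2.3} cannot handle when $d(x)$ is non-constant, whereas the weight $\theta(x)/d(x)$ is precisely what rebalances the $\Delta u$ and $\Delta\theta$ contributions into the form that Lemma \ref{lemma2.3}(ii) is designed to exploit.
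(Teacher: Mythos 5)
Your proof is correct and follows essentially the same route as the paper: the same weighted Lyapunov functional $\int_\Omega\frac{\theta}{d}\big(u-\theta-\theta\ln\frac{u}{\theta}\big)\,\rd x$ (the paper writes it as $\int_\Omega\int_{\theta}^{u}\frac{\theta}{d}\,\frac{s-\theta}{s}\,\rd s\,\rd x$), the same use of Lemma \ref{lemma2.3} to dispose of the diffusion term, and the same combination of Theorem \ref{th2.1}, Lemma \ref{th2.2} and $C^{2+\alpha}$-compactness to upgrade $L^2$ decay to convergence in $C^2(\ol\Omega)$. The only inessential deviation is that you keep the gradient term inside $\psi$, which is why you need the extra uniform positive lower bound on $u$ and a Hölder-in-time estimate for it, whereas the paper simply discards that nonpositive term and applies Lemma \ref{th2.2} to the reaction term alone.
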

\begin{proof}  Denote $$M=\max\left\{\frac{\max_{\bar\Om}m(x)}{\max_{\bar\Om} \varphi(x)},\max_{\bar\Om} u_0(x)\right\}.$$  Then $(0,M)$ is a pair of ordered  upper and lower solutions of problem \eqref{2.8}. This implies that the  problem \eqref{2.8} has a unique positive solution $u(x,t)$ satisfying $0<u(x,t)\leq M$ for $(x,t)\in\ol\Omega\times(0,\yy)$. It follows from Theorem \ref{th2.1} that  there exists a constant $C>0$ such that
\bes\label{2.5a}
\max_{t\geq1}\|u_t(\cdot,t)\|_{C(\ol\Omega)}+\max_{t\geq1}\|u(\cdot,t)\|_{C^{2+\alpha}(\ol\Omega)}\leq C.
\ees
		
For simplicity of notations, we denote $\theta=\theta_{d,m,\varphi}$ and $f(x,u)=u[m(x)-\varphi(x)u]$. Define a function $Q: [0,\infty)\to \mathbb{R}$ by
\bess
Q(t)=\int_{\Om}\int_{\theta(x)}^{u(x,t)}\frac{\theta(x)}{d(x)}\times\frac{s-\theta(x)}{s} \text{d}s\text{d}x.
\eess
Then $Q(t)\geq 0$ for $t\geq 0$. From \eqref{2.6},  we obtain
\bes
\frac{dQ(t)}{dt}&=&\int_{\Omega}\frac{\theta(u-\theta)}{du}u_t\text{d}x
			=\int_{\Omega}\frac{\theta(u-\theta)}{du}\lf[d\Delta u+f(x,u)\rr]\text{d}x\nonumber\\[1.5mm]
			&=&\int_{\Omega}\frac{\theta(u-\theta)}{du}\lf(d\Delta u+f(x,u)
			-\frac{u}{\theta}d\Delta \theta-\frac{u}{\theta}f(x,\theta)\rr)\text{d}x\nonumber\\[1.5mm]
			&=&\int_{\Omega}\frac{\theta(u-\theta)}{du}\lf(d\Delta u-\frac{u}{\theta}
			d\Delta \theta\rr)\text{d}x +\int_{\Omega}\frac{\theta(u-\theta)}{d}\lf[\frac{f(x,u)}{u}-\frac{f(x,\theta)}{\theta}\rr]\text{d}x\label{2.8b1}\\[1.5mm]
			&\leq & \int_{\Omega} \lf(-u^2\lf|\nabla \frac{\theta}{u}\rr|^2+\frac{\theta(u-\theta)}{d}\lf[\frac{f(x,u)}{u}-\frac{f(x,\theta)}{\theta}\rr]\rr)\text{d}x\nonumber\\[1.5mm]
			&\leq& -\int_{\Omega} \frac{\theta(u-\theta)}{d}\lf[\frac{f(x,u)}{u}-\frac{f(x,\theta)}{\theta}\rr]\text{d}x=-\int_{\Omega} \frac{\theta\varphi(u-\theta)^2}{d}\text{d}x\nonumber\\[1.5mm]
			&=:&\psi(t)\leq 0\nonumber.
	\ees
	Taking advantages of \eqref{2.5a}, we have $|\psi'(t)|<C_1$ in $[1,\yy)$ for some $C_1>0$. Then it following from Lemma  \ref{th2.2} that
	\bes\label{2.6a}
	\lim_{t\to\yy}\psi(t)=-\lim_{t\to\yy}\int_{\Omega} \frac{\theta\varphi(u-\theta)^2}{d}\text{d}x=0.
	\ees
	The estimate \eqref{2.5a} also implies that the set $\{u(\cdot,t):t\geq 1\}$ is relatively compact
	in $C^2(\ol\Omega)$. Therefore, we may assume that
	\[\|u(x,t_k)-u_{\yy}(x)\|_{C^2(\ol\Omega)}\to 0\ \ \ {\rm as}\ \ t_k\to\yy\]
	for some function $u_{\yy}\in C^2(\ol\Omega)$. Combining this with \eqref{2.6a}, we can conclude that $u_{\yy}(x)\equiv \theta(x)$ for $x\in\ol\Omega$. Thus $\dd\lim_{t\to\yy}u(x,t)=\theta(x)$ in $C^2(\ol\Omega)$.
	The proof is finished.
\end{proof}

\begin{remark}
For the  quasilinear parabolic problem with nonlinear diffusion and nonlinear boundary condition:
\begin{eqnarray}\label{fujia1}
\lf\{\begin{array}{lll}
	u_t=d(x)\rm{div} [a(x)\nabla \Phi(x,u)]+f(x,u),\ \  & x\in \Omega,& t>0,\\[1.5mm]
	\dd \f{\partial \Phi(x,u)}{\partial\nu}=g(x,u),& x\in \partial\Omega,&t>0,\\[1.5mm]
	u(x,0)=u_0(x)\geq\not\equiv 0, & x\in \Omega,
\end{array}\rr.
\end{eqnarray}
where $g$ and $\Phi$ satisfy $\mathbf{(H_1)}$ and $\mathbf{(H_2)}$, $a\in C^{1+\alpha}(\ol\Omega)$,  $d\in C^\af(\ol\Omega)$ with $0<\alpha<1$ and $a(x)>0$,  $d(x)>0$ on $\ol\Omega$, one may construct a similar Lyapunov functional to prove the uniqueness and global stability of the positive equilibrium solution  $u_*$ with respect to \eqref{fujia1},
\bess
F(t)=\int_{\Om}\int_{u_*(x)}^{u(x,t)} \frac{\Phi(x,u_*)}{d(x)}\frac{\Phi(x,s)-\Phi(x,u_*)}{\Phi(x,s)} \text{d}s\text{d}x.
\eess
For more results about the problem \eqref{fujia1},   readers can  refer to {\rm\cite{mc2006cana,pao2007na}} and the references therein.
\end{remark}



\section {Two species competition model}\label{section3}
In this section, we consider the global stability of positive equilibrium of the following two species Lotka-Volterra competition model in heterogenous environment:
\bes\lf\{\begin{array}{lll}\label{3a1}
\partial_t u_1=d_1(x)\Delta u_1+u_1[m_1(x)-a_{11}(x)u_1-a_{12}(x)u_2],\ \ & x\in \Omega,& t>0,\\[1.5mm]
\partial_t u_2=d_2(x)\Delta u_2+u_2[m_2(x)-a_{21}(x)u_1-a_{22}(x)u_2], & x\in \Omega,& t>0,\\[1.5mm]
\dd\partial_\nu u_1=\partial_\nu u_2=0,& x\in \partial\Omega,&
 t>0,\\[1.5mm]
u_1(x,0)=\varphi_1(x),\ u_2(x,0)=\varphi_2(x), & x\in \Omega,
 \end{array}\rr.
\ees
where the functions $m_i,a_{ij},d_i$ satisfy \eqref{a.2}.

We first consider the degenerate case of  \eqref{3a1} with an immobile species which has zero diffusion coefficient.
\begin{theorem}\label{thm3.1}  Assume that the initial functions $\phi_i\in C(\ol\Omega)$ $(i=1,2)$ satisfy $\phi_1(x)\geq,\not\equiv 0$ and $\phi_2(x)> 0$ on $\ol \Omega$. Let $d_2(x)\equiv 0$ for $x\in \bar{\Om}$,  the functions $d_1, m_i,a_{ij}$ satisfy  \eqref{a.2}  and
	\bes\label{3.2b1}
	a_{12}(x)a_{21}(x)<a_{11}(x)a_{22}(x), \ \ \ \ x\in\bar{\Omega}.
	\ees
\begin{enumerate}
\item[{\rm (i)}]	If
\bes\label{3.3b2}
\int_{\Omega} \frac{1}{d_1(x)a_{22}(x)}\lf[a_{22}(x)m_1(x)-{a_{12}(x)}m_2(x)\rr]\rd x>0,
\ees
and
	\bes\label{3.3b1}
\min_{\bar{\Om}} \frac{m_2(x)}{a_{21}(x)}>\max_{\bar{\Om}}\frac {a_{22}(x)m_1(x)-{a_{12}(x)}m_2(x)}{a_{11}(x)a_{22}(x)-a_{12}(x)a_{21}(x)},
	\ees
	then the problem \eqref{3a1} has a  positive equilibrium solution $(u_1^*(x),u_2^*(x))$,  and  $\dd\lim_{t\to\yy}u_1(x,t)=u_1^*(x)$  in $C^1(\ol\Omega)$  and $\dd\lim_{t\to\yy}u_2(x,t)=u_2^*(x)$ in $L^2(\Omega)$.
	
\item[{\rm (ii)}]	If
	\bes\label{3.4b1}
	\frac{m_2(x)}{a_{21}(x)}\leq \theta_{d_1,m_1,a_{11}}(x),\ \ \ \ x\in\bar{\Omega},
	\ees
	then $\dd\lim_{t\to\yy}u_1(x,t)=\theta_{d_1,m_1,a_{11}}(x)$  in $C^1(\ol\Omega)$  and $\dd\lim_{t\to\yy}u_2(x,t)=0$ in $L^2(\Omega)$, where $\theta_{d_1,m_1,a_{11}}$ is defined as in \eqref{3a}.
	
\item[{\rm (iii)}]	If
	\bes\label{3.5b1}
	\frac{a_{22}(x)}{a_{12}(x)} \leq \frac{m_2(x)}{m_1(x)},\ \ \ \ x\in\bar{\Omega},
	\ees
	then $\dd\lim_{t\to\yy}u_1(x,t)=0$  in $C^1(\ol\Omega)$  and $\dd\lim_{t\to\yy}u_2(x,t)=\frac{m_2(x)}{a_{22}(x)}$ in $L^2(\Omega)$.
\end{enumerate}
\end{theorem}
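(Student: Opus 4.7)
The unified strategy across all three parts is to construct a Lyapunov functional that combines, for each species with a positive target, a logarithmic term $\int_{u_i^*}^{u_i}\frac{s-u_i^*}{s}\,ds$, and for each extinct species a linear term $u_i$, with spatial weights tailored to absorb the $d_1$-inhomogeneity of the diffusion and the off-diagonal competition; the proof then follows the pattern of Theorem \ref{coro2.4}, using Lemma \ref{lemma2.3} on the diffusion contribution and Lemma \ref{th2.2} on the resulting dissipation.

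First I would establish the equilibria and uniform a priori bounds. In case (i), since $d_2\equiv 0$, the algebraic relation $u_2^*=(m_2-a_{21}u_1^*)/a_{22}$ reduces the $u_1^*$-equation to a scalar logistic with effective reaction $\tilde m=(a_{22}m_1-a_{12}m_2)/a_{22}$ and effective self-regulation $\tilde\varphi=(a_{11}a_{22}-a_{12}a_{21})/a_{22}$; condition \eqref{3.2b1} gives $\tilde\varphi>0$, \eqref{3.3b2} gives $\int_\Omega \tilde m/d_1\,dx>0$, and the existence-uniqueness theory for \eqref{3a} then produces $u_1^*$. The constant $\max_{\bar\Omega}\tilde m/\tilde\varphi$ is an upper solution of the logistic, so $u_1^*\le\max\tilde m/\tilde\varphi<\min m_2/a_{21}$ by \eqref{3.3b1}, which forces $u_2^*>0$ pointwise. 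In cases (ii) and (iii) the equilibria $(\theta_{d_1,m_1,a_{11}},0)$ and $(0,m_2/a_{22})$ are evident. The ordered pair $(0,M)$ with $M$ large gives global existence and uniform bounds; Theorem \ref{th2.1} applied to the $u_1$-equation provides $C^{2+\alpha}$-bounds and H\"older control on $\partial_t u_1$ for $t\ge 1$, while $u_2$ is uniformly Lipschitz in $t$ since it solves an ODE with bounded right-hand side.

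The core of case (i) is the functional
\begin{equation*}
V(t)=\int_\Omega\frac{u_1^*(x)}{d_1(x)}\int_{u_1^*}^{u_1}\frac{s-u_1^*}{s}\,ds\,dx+\int_\Omega\frac{u_1^*(x)\,a_{12}(x)}{d_1(x)\,a_{21}(x)}\int_{u_2^*}^{u_2}\frac{s-u_2^*}{s}\,ds\,dx.
\end{equation*}
Differentiating along trajectories and adding zero via the equilibrium equations, the diffusion contribution becomes $\int_\Omega\frac{u_1^*(u_1-u_1^*)}{u_1}\bigl(\Delta u_1-(u_1/u_1^*)\Delta u_1^*\bigr)dx$, which is bounded above by $-\int_\Omega u_1^2|\nabla(u_1^*/u_1)|^2\,dx\le 0$ via Lemma \ref{lemma2.3}(i) with $c\equiv 1$, $a\equiv 1$, $\beta=1$, $g\equiv 0$; the reaction contribution collapses pointwise to the quadratic form
\begin{equation*}
\frac{u_1^*}{d_1}\Bigl[a_{11}(u_1-u_1^*)^2+2a_{12}(u_1-u_1^*)(u_2-u_2^*)+\frac{a_{12}a_{22}}{a_{21}}(u_2-u_2^*)^2\Bigr].
\end{equation*}
The weight $u_1^* a_{12}/(d_1 a_{21})$ in front of the second logarithm is chosen precisely so that the cross-terms produced by $V_1'$ and $V_2'$ have identical coefficients, which makes the positive-definiteness condition reduce pointwise to exactly $a_{11}a_{22}>a_{12}a_{21}$, i.e.\ the weak competition \eqref{3.2b1}. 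One thereby obtains $V'(t)\le -\gamma\int_\Omega[(u_1-u_1^*)^2+(u_2-u_2^*)^2]\,dx$ for some $\gamma>0$. Lemma \ref{th2.2}, together with the Lipschitz-in-$t$ regularity of the integrand, then forces $\int_\Omega[(u_1-u_1^*)^2+(u_2-u_2^*)^2]\,dx\to 0$, and precompactness of $\{u_1(\cdot,t)\}_{t\ge 1}$ in $C^1(\bar\Omega)$ upgrades this to $u_1\to u_1^*$ in $C^1(\bar\Omega)$ and $u_2\to u_2^*$ in $L^2(\Omega)$.

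For case (ii) I would replace the second logarithm by the linear term $\int_\Omega\frac{u_1^* a_{12}}{d_1 a_{21}}u_2\,dx$, which is well-defined at $u_2^*=0$; its derivative acquires an extra invadability residual $\int_\Omega\frac{u_1^* a_{12}}{d_1 a_{21}} u_2(m_2-a_{21}u_1^*)\,dx$ that is nonpositive by \eqref{3.4b1}, while the quadratic part in $(u_1-u_1^*,u_2)$ has the same structure as in case (i) and is again controlled by \eqref{3.2b1}. In case (iii) the roles are reversed: one takes $V_1(t)=\int_\Omega u_1/d_1\,dx$, whose diffusion contribution $\int_\Omega\Delta u_1\,dx$ vanishes by the Neumann condition, paired with the logarithmic term at $u_2^*=m_2/a_{22}$; condition \eqref{3.5b1} makes the residual $\int_\Omega(u_1/d_1)(m_1-a_{12}u_2^*)\,dx$ nonpositive, and the quadratic part once more needs only \eqref{3.2b1}. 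The main obstacle is recognizing the specific weight pair $u_1^*/d_1$ and $u_1^* a_{12}/(d_1 a_{21})$: the first cancels the diffusion coefficient so that Lemma \ref{lemma2.3} applies in its cleanest form, and the second is the unique scaling that balances the cross-terms so the pointwise positive-definiteness of the reaction quadratic form reduces to the clean, $x$-independent inequality $a_{11}a_{22}>a_{12}a_{21}$.
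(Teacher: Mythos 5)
Your proposal matches the paper's proof in all essentials: the same weighted Lyapunov functionals with weights $u_1^*/d_1$ (or $1/d_1$, $\theta/d_1$) and $\xi=a_{12}u_1^*/(d_1a_{21})$ (resp.\ $a_{12}\theta/(d_1a_{21})$, $a_{12}/(d_1a_{21})$), the same reduction of the equilibrium to a scalar logistic problem via $u_2^*=(m_2-a_{21}u_1^*)/a_{22}$ with the maximum-principle bound and \eqref{3.3b1}, and the same use of Lemma \ref{lemma2.3}, Theorem \ref{th2.1} and Lemma \ref{th2.2} to pass from the dissipation inequality to convergence in $C^1$ and $L^2$. No substantive differences to report.
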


\begin{proof}  (i)  Let  $(u_1^*(x),u_2^*(x))$ be an  equilibrium solution of \eqref{3a1}. Then   $(u_1^*(x),u_2^*(x))$  satisfies
	\bes\lf\{\begin{array}{lll}\label{3a2}
		-d_1(x)\Delta u_1=u_1\left[m_1(x)-\dd\frac{a_{12}}{a_{22}} m_2(x)-(a_{11}-\frac{a_{12}a_{21}}{a_{22}})u_1\right],\ \ & x\in \Omega,\\[1.5mm]
		\dd\partial_\nu u_1=0,& x\in \partial\Omega,
	\end{array}\rr.
	\ees
	and $u_2^*=\dd\frac{m_2-a_{21}u_1^*}{a_{22}}$. Since \eqref{3.3b2} holds,   the problem \eqref{3a2} has a unique positive solution $u_1^*(x)$ as it is in a form of \eqref{3a} and \eqref{2.6aa} is satisfied.  And from the maximum principle of elliptic equations, it follows that $\dd u_1^*<\max_{\bar{\Om}}\dd\frac{m_1a_{22}-a_{12}m_2}{a_{11}a_{22}-a_{12}a_{21}}$. Then making using of \eqref{3.3b1}, we obtain that $u_2^*=\dd\frac{m_2-a_{21}u_1^*}{a_{22}}>0$ on $\ol\Omega$. Hence a unique positive equilibrium solution  $(u_1^*,u_2^*)$ of \eqref{3a1} exists.
	
	   Define a
	function $F: [0,\infty)\to \mathbb{R}$ by
	$$F(t)= \int_{\Om}\int_{u_1^*(x)}^{u_1(x,t)} \frac{u_1^*(x)}{d_1(x)}\times \frac{s-u_1^*(x)}{s} {\rm d}s{\rm d}x+\int_{\Om}\int_{u_2^*(x)}^{u_2(x,t)} \xi(x)\frac{s-u_2^*(x)}{s} {\rm d}s{\rm d}x,$$
	where $\xi(x)>0$ on $\bar \Omega$ will be specified latter. Then $F(t)\ge 0$. From \eqref{2.6}, \eqref{2.8b1} and  \eqref{3a1}, we obtain
	\begin{align}\label{3.7b1}
	F'(t)=& \int_{\Omega}\frac{u_1^*(u_1-u_1^*)}{d_1u_1}\partial_t u_1{\rm d}x+ \int_{\Omega}\frac{\xi(x)(u_2-u_2^*)}{u_2}\partial_t u_2{\rm d}x\nonumber\\[1mm]
	=& \int_{\Omega}\frac{u_1^*(u_1-u_1^*)}{u_1}(\Delta u_1-\frac{u_1}{u_1^*}\Delta u_1^*){\rm d}x\nonumber\\[1mm]
	&+ \int_{\Om} \frac{u_1^*}{d_1} (u_1-u_1^*)[-a_{11}(u_1-u_1^*)-a_{12}(u_2-u_2^*)] {\rm d}x\nonumber\\[1mm]
	&+\int_{\Om} \xi(x)(u_2-u_2^*)[-a_{22}(u_2-u_2^*)-a_{21}(u_1-u_1^*)]{\rm d}x\nonumber\\[1mm]
	\leq &-\int_{\Omega} u_1^2\Big|\nabla \frac{u_1^*}{u_1}\Big|^2
	{\rm d}x-\int_{\Om}  a_{11}\frac{u_1^*}{d_1}(u_1-u_1^*)^2{\rm d}x\nonumber\\[1mm]
	&-\int_{\Om} \bigg[\lf( a_{12} \frac{u_1^*}{d_1}+\xi(x)a_{21} \rr)(u_1-u_1^*)(u_2-u_2^*)+\xi(x)a_{22} (u_2-u_2^*)^2\bigg]{\rm d}x.
	\end{align}
	Choose $0<\delta\ll 1$ and $\dd{\xi}(x)=\frac{a_{12}(x)u_1^*(x)}{d_1(x)a_{21}(x)}$. It then follows from \eqref{3.2b1} that
	\begin{align}
	&2\sqrt{\xi(a_{11}-\delta)(a_{22}-\delta)\frac{u_1^*}{d_1}}-\left( a_{12}\frac{u_1^*}{d_1}+\xi a_{21} \right)\nonumber\\[1mm]
	=&2\sqrt{\xi(a_{11}-\delta)(a_{22}-\delta)\frac{u_1^*}{d_1}}-2\sqrt{\xi a_{21}  a_{12}\frac{u_1^*}{d_1}}>0\nonumber
	\end{align}
	This combined with \eqref{3.7b1} allows us to derive
	\begin{equation*}
	\begin{split}
	F'(t)\leq &-\int_{\Om} \left[ \delta \frac{u_1^*}{d_1}(u_1-u_1^*)^2+\xi \delta (u_2-u_2^*)^2\right]{\rm d}x:=\psi(t).
	\end{split}
	\end{equation*}
	
Next we show the global stability of the positive equilibrium solution $(u_1^*,u_2^*)$. Clearly, both $u_1$ and $u_2$ are bounded in $\ol\Omega\times [0,\yy)$. Then with the help of Theorem \ref{th2.1} and Sobolev embedding theorem, there exists a constant $C>0$ such that
\bes\label{3.8b1}
\max_{t\geq1}\|u_1(\cdot,t)\|_{C^{1+\alpha}(\ol\Omega)}
\leq C \ \ \ {\rm for \ some}\ \ 0<\alpha<1.
\ees
Taking advantages of \eqref{3a1}, \eqref{3.8b1} and  the definition of $\psi(t)$, we get   $|\psi'(t)|<C_1$ in $t\in[1,\yy)$ for some $C_1>0$. Then it follows from Lemma \ref{th2.2} that $\dd\lim_{t\to\yy}\psi(t)=0$, which implies that
\bess
\lim_{t\to\yy}u_1(x,t)=u_1^*(x), \ \ \lim_{t\to\yy}u_2(x,t)=u_2^*(x) \ \ {\rm in} \ \  L^2(\ol\Omega).
\eess
Similarly to the discussion of  Theorem \ref{coro2.4}, we can prove  $\dd\lim_{t\to\yy}u_1(x,t)=u_1^*(x)$  in $C^1(\ol\Omega)$.
	
(ii)   Clearly, $(\theta_{d_1,m_1,a_{11}},0)$ is a semi-trivial equilibrium solution of \eqref{3a1}, where $\theta_{d_1,m_1,a_{11}}(x)$ is the unique positive solution of  \eqref{3a}. Define a
function $F: [0,\infty)\to \mathbb{R}$ by
$$F(t)= \int_{\Om}\int_{\theta_{d_1,m_1,a_{11}(x)}}^{u_1(x,t)} \frac{\theta_{d_1,m_1,a_{11}}(x)}{d_1(x)}\times\frac{s-\theta_{d_1,m_1,a_{11}}(x)}{s} {\rm d}s{\rm d}x+\int_{\Om}\xi(x)u_2(x,t){\rm d}x,$$
where $\xi(x)=\dd\frac{a_{12}\theta_{d_1,m_1,a_{11}(x)}}{d_1(x)a_{21}(x)}$.  Here we simply denote $\theta=\theta_{d_1,m_1,a_{11}}$. From \eqref{2.6}, \eqref{2.8b1}  and  \eqref{3.4b1}, we get
\begin{align*}
F'(t)=& \int_{\Omega}\lf(\frac{\theta(u_1-\theta)}{u_1}(\Delta u_1-\frac{u_1}{\theta}\Delta \theta){\rm d}x\nonumber+ \frac{\theta}{d_1} (u_1-\theta)[-a_{11}(u_1-\theta)-a_{12}u_2] \rr){\rm d}x\nonumber\\[1mm]
&+\int_{\Om} \xi u_2[(m_2-a_{21}\theta)-a_{21}(u_1-\theta)-a_{22}u_2]{\rm d}x\nonumber\\[1mm]
\leq &-\int_{\Om}  \bigg[a_{11}\frac{\theta}{d_1}(u_1-\theta)^2+\lf( a_{12} \frac{\theta}{d_1}+\xi a_{21} \rr)(u_1-\theta)u_2+\xi a_{22} u_2^2\bigg]{\rm d}x.
\end{align*}
Then by the same arguments as part (i), we obtain  the desired conclusion.

(iii)  Clearly, $(u_1^*(x),u_2^*(x))=\lf(0,\dd\frac{m_2(x)}{a_{22}(x)}\rr)$ is a semi-trivial equilibrium solution of \eqref{3a1}. Define a function $F: [0,\infty)\to \mathbb{R}$ by
$$F(t)= \int_{\Om}\frac{u_1}{d_1} {\rm d}x+\int_{\Om}\int_{u_2^*(x)}^{u_2(x,t)} \xi(x)\frac{s-u_2^*(x)}{s} {\rm d}s{\rm d}x,$$
where $\xi(x)=\dd\frac{a_{12}(x)}{d_1(x)a_{21}(x)}$.  From  \eqref{3.5b1}, we have
\begin{align*}
F'(t)=&\int_{\Om} \lf(\frac{u_1(x,t)}{d_1(x)}[(m_1-a_{12}u_2^*)-a_{11}u_1-a_{12}(u_2-u_2^*)]+\xi (u_2-u_2^*)[-u_1-a_{22}(u_2-u_2^*)]\rr){\rm d}x\nonumber\\[1mm]
\leq &-\int_{\Om}  \bigg[\frac{a_{11}}{d_1}u_1^2+\lf(\frac{a_{12}}{d_1}+\xi a_{21} \rr)u_1(u_2-u_2^*)+\xi a_{22} (u_2-u_2^*)^2\bigg]{\rm d}x.
\end{align*}
Then by the same arguments as part (i), we get  the desired conclusion. The proof is completed.
 \end{proof}

 Next we prove the global stability of the positive equilibrium solution $(u_1^*,u_2^*)$ using Lyapunov functional method.
\begin{proof}[Proof of Theorem {\rm\ref{th1.2}}] We first assume that $\mathbf{\bf(A_1)}$  holds. Define a
function $F: [0,\infty)\to \mathbb{R}$ by
$$F(t)= \int_{\Om}\int_{u_1^*(x)}^{u_1(x,t)} \frac{u_1^*(x)}{d_1(x)}\times \frac{s-u_1^*(x)}{s} {\rm d}s{\rm d}x+\xi \int_{\Om}\int_{u_2^*(x)}^{u_2(x,t)} \frac{u_2^*(x)}{d_2(x)}\times \frac{s-u_2^*(x)}{s} {\rm d}s{\rm d}x,$$
where the constant $\xi>0$ will be specified  latter. Then $F(t)\ge 0$. From \eqref{2.6}, \eqref{2.8b1} and  \eqref{3a1}, we obtain
 \begin{align}
 F'(t)=& \int_{\Omega}\frac{u_1^*(u_1-u_1^*)}{u_1}(\Delta u_1-\frac{u_1}{u_1^*}\Delta u_1^*){\rm d}x+ \xi\int_{\Omega}\frac{u_2^*(u_2-u_2^*)}{u_2}(\Delta u_2-\frac{u_2}{u_2^*}\Delta u_2^*){\rm d}x\nonumber\\[1mm]
&+ \int_{\Om} \frac{u_1^*}{d_1} (u_1-u_1^*)[-a_{11}(u_1-u_1^*)-a_{12}(u_2-u_2^*)] {\rm d}x\nonumber\\[1mm]
&+\xi \int_{\Om} \frac{u_2^*}{d_2} (u_2-u_2^*)[-a_{22}(u_2-u_2^*)-a_{21}(u_1-u_1^*)]{\rm d}x\nonumber\\[1mm]
\leq &-\int_{\Omega}\bigg( u_1^2\Big|\nabla \frac{u_1^*}{u_1}\Big|^2+\xi u_2^2\Big|\nabla \frac{u_2^*}{u_2}\Big|^2
\bigg){\rm d}x-\int_{\Om}  a_{11}\frac{u_1^*}{d_1}(u_1-u_1^*)^2{\rm d}x\nonumber\\[1mm]
&-\int_{\Om} \bigg[\left( a_{12} \frac{u_1^*}{d_1}+\xi a_{21} \frac{u_2^*}{d_2}\right)(u_1-u_1^*)(u_2-u_2^*)+\xi a_{22} \frac{u_2^*}{d_2}(u_2-u_2^*)^2\bigg]{\rm d}x.\label{3.9}
 \end{align}
Choose $0<\delta\ll 1$ and $\dd{\xi}{}=(a_{12}^+/a_{21}^+)\sqrt{\beta_1\beta_2}$ where $a_{12}^+=\max_{\ol\Omega}a_{12}(x)$ and $a_{21}^+=\max_{\ol\Omega}a_{21}(x)$. It then follows from \eqref{a.3} and $\mathbf{\bf(A_1)}$ that
\begin{align}
&2\sqrt{\xi(a_{11}-\delta)(a_{22}-\delta)\frac{u_1^*u_2^*}{d_1d_2}}-\left( a_{12}\frac{u_1^*}{d_1}+\xi a_{21} \frac{u_2^*}{d_2}\right)\nonumber\\[1mm]
=&2\sqrt{ \xi \frac{u_1^*u_2^*}{d_1d_2}}\sqrt{(a_{11}-\delta)(a_{22}-\delta)}-
\sqrt{ \xi \frac{u_1^*u_2^*}{d_1d_2}}\bigg(a_{12}\sqrt{\frac{ u_1^*d_2}{\xi u_2^*d_1}}+a_{21}\sqrt{\frac{\xi u_2^*d_1}{ u_1^*d_2}}\bigg)\nonumber\\[1mm]
\geq& \sqrt{ \xi \frac{u_1^*u_2^*}{d_1d_2}}\bigg[2\sqrt{(a_{11}-\delta)(a_{22}-\delta)}-
\bigg(a_{12}^+\sqrt{\frac{1}{\xi}\beta_2}+a_{21}^+\sqrt{\frac{\xi }{ \beta_1}}\bigg)\bigg]\nonumber\\[1mm]
\geq&\sqrt{ \xi \frac{u_1^*u_2^*}{d_1d_2}}\bigg(2\sqrt{ (a_{11}-\delta)(a_{22}-\delta)}-2\sqrt{a_{12}^+a_{21}^+\sqrt{\beta_2/\beta_1}}\bigg)> 0.\nonumber
\end{align}
This combined with \eqref{3.9} allows us to derive
\begin{equation*}
\begin{split}
F'(t)\leq &-\int_{\Om} \left[ \delta \frac{u_1^*}{d_1}(u_1-u_1^*)^2+\xi \delta \frac{u_2^*}{d_2}(u_2-u_2^*)^2\right]{\rm d}x:=\psi(t).
\end{split}
\end{equation*}
Making use of Theorem \ref{th2.1} and Lemma \ref{th2.2}, by the similar arguments  as  in the proof of Corollary  \ref{coro2.4},  we can prove  $\dd\lim_{t\to\yy}u_1(x,t)=u_1^*(x)$ and $\dd\lim_{t\to\yy}u_2(x,t)=u_2^*(x)$ in $C^2(\ol\Omega)$.

When one of $\mathbf{\bf(A_2)}$, $\mathbf{\bf(A_3)}$ and $\mathbf{\bf(A_4)}$ holds, the proof is similar and the details are omitted here. The proof of Theorem \ref{th1.2} is finished.
\end{proof}

To show the estimate \eqref{a.3} is achievable, we consider the following problem
  	\bes\lf\{\begin{array}{lll}\label{3a3}
	\partial_t u_1=d_1(x)\Delta u_1+u_1[\td m_1 \psi(x)+\var_1 f_1(x)-\td a_{11} \psi(x)u_1-\td a_{12}\psi(x)u_2],& x\in \Omega, t>0,\\[.5mm]
\partial_t u_2=d_2(x)\Delta u_2+u_2[\td m_2 \psi(x)+\var_2 f_2(x)-\td a_{21} \psi(x)u_1-\td a_{22}\psi(x)u_2],& x\in \Omega, t>0,\\[.5mm]
		\dd\partial_\nu u_1=\partial_\nu u_2=0,& x\in \partial\Omega,
		t>0,\\[.5mm]
	u_1(x,0)=\varphi_1(x),\ u_2(x,0)=\varphi_2(x), & x\in \Omega,
	\end{array}\rr.
	\ees
	where $i\in \{1,2\}$, $\td m_i,\,\td a_{ij},\,\var_i$ are all
	positive constants,  $\psi,\,d_i\,f_i\in C^{\alpha}(\ol\Omega)$ and
	$\psi(x),d_i(x)>0$ on $\ol\Om$. Now we apply  Theorem \ref{th1.2} to study the global stability of the positive equilibrium solutions of problem \eqref{3a3}.

\begin{corollary}\label{coro3.7} Assume that the initial functions $\varphi_i\in C(\ol\Omega)$ $(i=1,2)$ satisfy $\varphi_i(x)\geq,\not\equiv 0$. If $0\leq \var_i  \ll 1$ and
	\bess
	\frac{\td a_{21}}{\td a_{11}}<\frac{\td m_2}{\td m_1}<\frac{\td a_{22}}{\td a_{12}}, \ \ \ \frac{\td a_{11} \td a_{22}}{\td a_{12} \td a_{21}}>\sqrt{\frac{\max_{\ol\Omega}d_2/d_1}{\min_{\ol\Omega}d_2/d_1}}.
	\eess
	Then the problem \eqref{3a3} has a positive equilibrium solution  $(u_1^*(x),u_2^*(x))$ which  is globally asymptotically stable.
\end{corollary}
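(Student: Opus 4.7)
The plan is to apply Theorem \ref{th1.2} via its condition $\mathbf{(A_2)}$, after showing by a perturbation argument that \eqref{3a3} admits a positive equilibrium close to the constant state available when $\var_1=\var_2=0$. Two features of \eqref{3a3} single out $\mathbf{(A_2)}$: first, $a_{12}(x)/a_{21}(x)=\td a_{12}/\td a_{21}$ is a constant, so the structural hypothesis of $\mathbf{(A_2)}$ holds with $\lambda=\td a_{12}/\td a_{21}$; second, the ratio $a_{11}(x)a_{22}(x)/(a_{12}(x)a_{21}(x))=\td a_{11}\td a_{22}/(\td a_{12}\td a_{21})$ is also constant, so the minimum appearing in $\mathbf{(A_2)}$ collapses to a plain ratio of constants.

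At $\var_i=0$ the weak-competition hypothesis $\td a_{21}/\td a_{11}<\td m_2/\td m_1<\td a_{22}/\td a_{12}$ forces $\td a_{11}\td a_{22}>\td a_{12}\td a_{21}$ and, by Cramer's rule, yields a unique positive solution $(\bar u_1,\bar u_2)$ of the linear system $\td a_{i1}\bar u_1+\td a_{i2}\bar u_2=\td m_i$ ($i=1,2$). Since every nonlinearity in \eqref{3a3} with $\var_i=0$ carries the common factor $\psi(x)$, the constant pair $(\bar u_1,\bar u_2)$ is a classical positive equilibrium. To continue it to small $\var_i>0$, I would invoke the implicit function theorem on the steady-state map sending $(u_1,u_2,\var_1,\var_2)$ to the elliptic residual of \eqref{3a3}, acting between appropriate H\"older spaces with Neumann boundary data. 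Its Fr\'echet derivative at $(\bar u_1,\bar u_2,0,0)$ is $L(v_1,v_2)=\bigl(d_i\Delta v_i-\bar u_i\psi(\td a_{i1}v_1+\td a_{i2}v_2)\bigr)_{i=1,2}$ with Neumann conditions. Testing $-L$ against $(\af_1 v_1,\af_2 v_2)$ for positive constants $\af_i$ and integrating by parts produces
\[
\int_\Omega\!\bigl(\af_1 d_1|\nabla v_1|^2+\af_2 d_2|\nabla v_2|^2\bigr)\rd x+\int_\Omega\!\psi\bigl[\af_1\bar u_1\td a_{11}v_1^2+(\af_1\bar u_1\td a_{12}+\af_2\bar u_2\td a_{21})v_1v_2+\af_2\bar u_2\td a_{22}v_2^2\bigr]\rd x.
\]
Because $\td a_{11}\td a_{22}>\td a_{12}\td a_{21}$, one can choose the ratio $\af_1/\af_2>0$ so that the inner quadratic in $(v_1,v_2)$ is positive definite, whence the whole expression is coercive in $H^1\times H^1$ and $\ker L=\{0\}$; since $L$ is Fredholm of index zero between the stated spaces, it is an isomorphism. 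The implicit function theorem then delivers a $C^1$ branch of positive classical solutions $(u_1^*(\cdot;\var),u_2^*(\cdot;\var))\to(\bar u_1,\bar u_2)$ in $C^2(\ol\Omega)$ as $\var\to 0$.

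To conclude, I would use this $C^2$ convergence to verify the hypotheses of Theorem \ref{th1.2}: for any $\delta>0$ and all sufficiently small $\var_i$, the constants
\[
\beta_1=\frac{\bar u_1}{\bar u_2}\min_{\ol\Omega}\frac{d_2}{d_1}-\delta,\qquad \beta_2=\frac{\bar u_1}{\bar u_2}\max_{\ol\Omega}\frac{d_2}{d_1}+\delta
\]
realize the bound \eqref{a.3}, and $\mathbf{(A_2)}$ becomes $\td a_{11}\td a_{22}/(\td a_{12}\td a_{21})>\sqrt{\beta_2/\beta_1}$. Since $\sqrt{\beta_2/\beta_1}\to\sqrt{\max_{\ol\Omega}(d_2/d_1)/\min_{\ol\Omega}(d_2/d_1)}$ as $\delta,\var\to 0$, the strict inequality posited on the $\td a_{ij}$ secures $\mathbf{(A_2)}$ for $\delta$ and $\var_i$ small, and Theorem \ref{th1.2} then gives the global asymptotic stability of $(u_1^*,u_2^*)$. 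The main obstacle I expect is the invertibility of $L$: the coefficient matrix $(\bar u_i\td a_{ij})$ is asymmetric, so an unweighted energy is inconclusive, and it is precisely the inequality $\td a_{11}\td a_{22}>\td a_{12}\td a_{21}$ (a consequence of weak competition) that makes admissible weights $\af_i$ exist.
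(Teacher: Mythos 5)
Your overall route is genuinely different from the paper's: the paper never linearizes or invokes the implicit function theorem. Instead it takes the constants $\bar r_i=\max_{\ol\Omega}f_i/\psi$, $\underline r_i=\min_{\ol\Omega}f_i/\psi$, solves the $4\times 4$ linear system with data $\td m_i+\var_i\bar r_i$, $\td m_i+\var_i\underline r_i$ to get constant vectors $(\bar u_1,\bar u_2)$, $(\underline u_1,\underline u_2)$ that form a pair of coupled ordered upper and lower solutions of \eqref{3.16}; this yields at once the existence of $(u_1^*,u_2^*)$ and the explicit bounds $\underline u_i\le u_i^*\le\bar u_i$, from which $\beta_1,\beta_2$ and the ratio $\beta_2/\beta_1$ are computed in closed form and seen to tend to $\max_{\ol\Omega}(d_2/d_1)/\min_{\ol\Omega}(d_2/d_1)$ as $\var_i\to 0$, so Theorem \ref{th1.2} applies (with the constant ratio $a_{11}a_{22}/(a_{12}a_{21})=\td a_{11}\td a_{22}/(\td a_{12}\td a_{21})$, as you also observe). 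Your continuation argument reaches the same endpoint, but at the cost of a Fredholm/invertibility analysis that the upper--lower solution construction avoids entirely; your verification of \eqref{a.3} via $C^2$ convergence to the constant state and the passage $\sqrt{\beta_2/\beta_1}\to\sqrt{\max(d_2/d_1)/\min(d_2/d_1)}$ is correct (and the second bound in \eqref{a.3} is automatic for any positive equilibrium in $C(\ol\Omega)$).

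There is, however, a concrete flaw in your key step, the proof that $\ker L=\{0\}$. The displayed identity obtained by testing $-L$ against $(\af_1 v_1,\af_2 v_2)$ is false when $d_i$ is nonconstant: integrating $-\af_i v_i\,d_i(x)\Delta v_i$ by parts produces, besides $\af_i\int_\Omega d_i|\nabla v_i|^2\rd x$, the extra term $\af_i\int_\Omega v_i\,\nabla d_i\cdot\nabla v_i\,\rd x$, which has no sign; worse, $d_i$ is only assumed $C^\alpha(\ol\Omega)$, so this term is not even well defined. The natural repair is to test against $(\af_1 v_1/d_1(x),\af_2 v_2/d_2(x))$ --- precisely the $1/d_i$ weights used in the paper's Lyapunov functionals --- which gives clean gradient terms but changes the zeroth-order form to one with pointwise matrix entries $\af_i\bar u_i\td a_{ij}\psi(x)/d_i(x)$. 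Its positive definiteness at every $x$ is then \emph{not} a consequence of the bare weak-competition inequality $\td a_{11}\td a_{22}>\td a_{12}\td a_{21}$, contrary to your closing claim: writing $\rho=d_2/d_1$, $\rho_-=\min_{\ol\Omega}\rho$, $\rho_+=\max_{\ol\Omega}\rho$, one needs $2\sqrt{\td a_{11}\td a_{22}}>\sqrt{\td a_{12}\td a_{21}}\bigl(\sqrt{t\rho(x)}+1/\sqrt{t\rho(x)}\bigr)$ for all $x$, where $t>0$ encodes the ratio $\af_1/\af_2$; optimizing with $t=(\rho_-\rho_+)^{-1/2}$ reduces this to $\td a_{11}\td a_{22}/(\td a_{12}\td a_{21})>\tfrac14\bigl((\rho_+/\rho_-)^{1/4}+(\rho_-/\rho_+)^{1/4}\bigr)^2$, which is implied by the assumed $\td a_{11}\td a_{22}/(\td a_{12}\td a_{21})>\sqrt{\rho_+/\rho_-}$ because $q\ge\tfrac14(q+2+1/q)$ for $q=\sqrt{\rho_+/\rho_-}\ge 1$. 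So the invertibility of $L$ does hold under the Corollary's hypotheses, but your argument as written would fail for nonconstant $d_i$, and the choice of admissible weights genuinely requires the second hypothesis on $d_2/d_1$, not only weak competition; with this step repaired the rest of your proof goes through.
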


\begin{proof} Denote by $(u_1^*(x),u_2^*(x))$ any positive solution of the following elliptic problem
	\bes\lf\{\begin{array}{lll}\label{3.16}
		d_i(x)\Delta u_i^*+u_i^*[\td m_i \psi(x)+\var_i f_i(x)-\td a_{i1} \psi(x)u_1^*-\td a_{i2}\psi(x)u_2^*]=0,\ \ & x\in \Omega,\\[1.5mm]
		\dd\partial_\nu u_i^*=0,& x\in \partial\Omega.
	\end{array}\rr.
	\ees
	Set $\bar r_i=\max_{x\in\ol\Omega}{f_i(x)}/{\psi(x)}$, $\underline r_i=\min_{x\in\ol\Omega}{f_i(x)}/{\psi(x)}$ for $i=1,2$. Owing to ${\td a_{21}}/{\td a_{11}}<{\td m_2}/{\td m_1}<{\td a_{22}}/{\td a_{12}}$ and $0<\var,\,\eta \ll 1$, we get
	\bess
	\frac{\td a_{21}}{\td a_{11}}<\frac{\td  m_2+\var_2 \bar r_2}{\td m_1+\var_1 \underline r_1}<\frac{\td a_{22}}{\td a_{12}},\ \ \ \frac{\td a_{21}}{\td a_{11}}<\frac{\td  m_2+\var_2 \underline r_2}{\td m_1+\var_1 \bar r_1}<\frac{\td a_{22}}{\td a_{12}},
	\eess
	which implies that the linear system
	\bess
	\begin{cases}
		(\td m_1+\var_1\bar r_1) -\td a_{11} \bar u_1-\td a_{12} \underline u_2=0, \\
		(\td m_1+\var_1 \underline r_1) -\td a_{11} \underline u_1-\td a_{12}\bar u_2=0, \\
		(\td m_2+\var_2 \bar r_2 )-\td a_{21} \underline u_1-\td a_{22} \bar u_2=0,\\
		(\td m_2+\var_2 \underline r_2 )-\td a_{21} \bar u_1-\td a_{22} \underline u_2=0
	\end{cases}
	\eess
	has a unique positive solution $(\bar u_1,\underline u_1, \bar u_2,\underline u_2)$. A direct
	calculation gives
	\bess
	\underline u_1=\frac{\td a_{22} (\td m_1+\var_1 \underline r_1)-\td a_{12}(\td  m_2+\var_2 \bar r_2)}{\td a_{11} \td a_{22}-\td a_{12}\td a_{21}},\ \ \bar u_1=
	\frac{\td a_{22} (\td m_1+\var_1 \bar r_1)-\td a_{12}(\td m_2+\var_2 \underline r_2)}{\td a_{11} \td a_{22}-\td a_{12}\td a_{21}},\\[2mm]
	\underline u_2=\frac{\td a_{11} (\td m_2+\var_2 \underline r_2)-\td a_{21}(\td m_1+\var_1 \bar r_1)}{\td a_{11} \td a_{22}-\td a_{12}\td a_{21}},\ \ \bar u_2=
	\frac{\td a_{11} (\td m_2+\var_2 \bar r_2)-\td a_{21}(\td m_1+\var_1 \underline r_1)}{\td a_{11} \td a_{22}-\td a_{12}\td a_{21}}.
	\eess
	Clearly, $\bar u_1\geq \underline u_1$ and $\bar u_2\geq \underline u_2$.
	It is easily seen that $(\bar u_1, \bar u_2)$ and $(\underline u_1,\underline u_2)$ is a pair of ordered upper and lower solutions of \eqref{3.16}. Consequently  the problem \eqref{3a3} has a positive equilibrium solution $(u_1^*(x),u_2^*(x))$,  and
	\begin{equation*}
	0<\underline u_1\leq u_1^*(x)\leq \bar u_1,\;\;\; 0<\underline u_2\leq u_2^*(x)\leq\bar u_2,\ \ \  \forall\;x\in\ol\Omega.
	\end{equation*}
	Define
	\[\beta_1=\dd\frac{\underline u_1}{\bar u_2}\min_{\ol\Omega}\frac{d_2(x)}{d_1(x)},
	\ \ \ \beta_2=\frac{\bar u_1}{\underline u_2}\max_{\ol\Omega}\frac{d_2(x)}{d_1(x)}.\]
	Then
	\[\beta_1\leq\frac{u_1^*(x)d_2(x)}{u_2^*(x)d_1(x)}\leq \beta_2,\]
	and
	\bess
	&\dd\frac{\beta_2}{\beta_1}=\frac{\td a_{22} (\td m_1+\var_1 \bar r_1)-\td a_{12}(\td m_2+\var_2 \underline r_2)}{\td a_{11}(\td m_2+\var_2 \underline r_2)-\td a_{21}(\td m_1+\var_1 \bar r_1)}\times\frac{\td a_{11} (\td m_2+\var_2 \bar r_2)-\td a_{21}(\td m_1+\var_1 \underline r_1)}{\td a_{22} (\td m_1+\var_1 \underline r_1)-\td a_{12}(\td  m_2+\var_2 \bar r_2)}
	\times \frac{\max_{\ol\Omega}d_2/d_1}{\min_{\ol\Omega}d_2/d_1}.&
	\eess
	Since $0<\var,\,\eta \ll 1$ and
	\[\frac{\td a_{11} \td a_{22}}{\td a_{12} \td a_{21}}>\sqrt{\frac{\max_{\ol\Omega}d_2/d_1}{\min_{\ol\Omega}d_2/d_1}},\]
	it follows that
	\bess
	\dd\min_{x\in\ol\Omega}\frac{\td a_{11} \phi(x)\td a_{22} \phi(x)}{\td a_{12}\phi(x)\td a_{21} \phi(x)}=\frac{\td a_{11} \td a_{22}}{\td a_{12} \td a_{21}}>\sqrt{\frac{\beta_2}{\beta_1}}.
	\eess
	Thus, by Theorem \ref{th1.2}, $\dd\lim_{t\to\yy}u_1(x,t)=u_1^*(x)$ and $\dd\lim_{t\to\yy}u_2(x,t)=u_2^*(x)$ uniformly for $x\in \ol\Omega$. The proof is finished.
\end{proof}

\section{$k$ species competition models}

In this section we prove Theorem \ref{th1.4}, and  the details are contained in the following Theorems \ref{th4.6} and \ref{th4.11}.

\subsection{Global stability of positive equilibrium solution}\label{section4.1}

We consider a Lotka-Volterra competition model with $k$ species
\begin{equation}\label{4.1}
\begin{cases}
\dd \frac{\partial u_i}{\partial t}=d_i\Delta u_i+u_i\bigg(m_i(x)-\sum_{ j=1}^{ k} a_{ij}u_j\bigg), & x\in \Omega,\; \ t>0,\; 1\leq i\leq k,\\
\dd \partial_\nu u_i=0,& x\in \partial\Omega,\;
 t>0,\; 1\leq i\leq k,\\
u_i(x,0)=\varphi_i(x)\geq, \, \not\equiv 0, & x\in\Omega, \; 1\leq i\leq k,
 \end{cases}
\end{equation}
where $d_i>0$ and $a_{ij}\geq 0$ are constants. Without loss of generality, we assume that
\bes\label{aij}
a_{ij}\geq 0,\ \ a_{ii}=1.
\ees
The functions  $m_i\in C^{\alpha}(\ol\Omega)$  and satisfy  $m_i(x)>0$ on $\bar\Om$. In the following, we will investigate the global stability of $\mathbf{u}^*=(u_1^*(x),...,u_k^*(x))$  which is a positive solution of the  elliptic problem
\begin{equation}\label{4a1}
\begin{cases}
\dd d_i\Delta u_i+u_i\bigg(m_i(x)-\sum_{ j=1}^{ k} a_{ij}u_j\bigg)=0, & x\in \Omega,\ \;1\leq i\leq k,\\
\dd \partial_\nu u_i=0,& x\in \partial\Omega, \;1\leq i\leq k.
 \end{cases}
\end{equation}

If the resource functions $m_i(x)$  are positive constants,  the following result is well known.

\begin{theorem}{\rm(\hspace{-.1mm}\cite[Page 138]{gbs1977})}\label{th4.1}\, Assume that $m_i(x)$ for
$1\leq i\leq k$ are positive constants and $A=(a_{ij})_{k\times k}$. If the problem \eqref{4.1} has a positive equilibrium
 $\mathbf{u}^*$ and there exists a diagonal matrix $Q$ with positive constant entries such that $QA+A^TQ$ is positive definite. Then $\mathbf{u}^*$ is globally asymptotically stable with respect to \eqref{4.1}.
\end{theorem}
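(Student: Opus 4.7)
The plan is to adapt the classical Volterra--Goh Lyapunov functional to the reaction--diffusion setting. Since each $m_i$ is a positive constant, any positive solution $\mathbf{u}^*=(u_1^*,\ldots,u_k^*)$ of the elliptic problem \eqref{4a1} must itself be a vector of positive constants satisfying $m_i=\sum_{j=1}^k a_{ij}u_j^*$ for $1\le i\le k$. Let $Q=\mathrm{diag}(q_1,\ldots,q_k)$ with $q_i>0$ be chosen so that $QA+A^TQ$ is positive definite, and denote by $2c>0$ its smallest eigenvalue. I would then work with
\begin{equation*}
F(t)=\sum_{i=1}^k q_i\int_{\Omega}\left(u_i(x,t)-u_i^*-u_i^*\ln\frac{u_i(x,t)}{u_i^*}\right)\mathrm{d}x\ge 0,
\end{equation*}
which is the constant-weight analogue of the spatially weighted functional used in Theorem \ref{th1.2}, legitimate here precisely because $\mathbf{u}^*$ and $d_i$ are constants.

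Next I would differentiate along solutions of \eqref{4.1}. After integration by parts using the Neumann boundary condition, the diffusion contribution of species $i$ becomes
\begin{equation*}
\int_{\Omega}q_i d_i\left(1-\frac{u_i^*}{u_i}\right)\Delta u_i\,\mathrm{d}x=-\int_{\Omega}q_i d_i u_i^*\frac{|\nabla u_i|^2}{u_i^2}\,\mathrm{d}x\le 0,
\end{equation*}
while the reaction contribution, invoking $m_i=\sum_j a_{ij}u_j^*$, simplifies to $-q_i\int_{\Omega}(u_i-u_i^*)\sum_j a_{ij}(u_j-u_j^*)\,\mathrm{d}x$. Summing in $i$ produces the pointwise quadratic form $\tfrac12(\mathbf{u}-\mathbf{u}^*)^T(QA+A^TQ)(\mathbf{u}-\mathbf{u}^*)\ge c|\mathbf{u}-\mathbf{u}^*|^2$, and thus
\begin{equation*}
F'(t)\le -c\int_{\Omega}|\mathbf{u}(x,t)-\mathbf{u}^*|^2\,\mathrm{d}x=:\psi(t)\le 0.
\end{equation*}

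To turn this into asymptotic convergence, I would first secure uniform $L^\infty$ bounds. Because $a_{ij}\ge 0$ and $a_{ii}=1$, the component $u_i$ is a subsolution of the scalar logistic problem $\partial_t v=d_i\Delta v+v(m_i-v)$ under Neumann data, so $\sup_{t\ge 0}\|u_i(\cdot,t)\|_{L^\infty(\Omega)}<\infty$; the strong maximum principle yields $u_i>0$ for $t>0$. Theorem \ref{th2.1} then furnishes a uniform $C^{2+\alpha}(\bar\Omega)$ bound on $u_i(\cdot,t)$ for $t\ge 1$, which in particular bounds $\|\psi\|_{C^{\alpha/2}([1,\infty))}$. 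Lemma \ref{th2.2} applied to $F$ yields $\psi(t)\to 0$, hence $\mathbf{u}(\cdot,t)\to\mathbf{u}^*$ in $L^2(\Omega)^k$. A standard precompactness argument on $\{\mathbf{u}(\cdot,t):t\ge 1\}\subset C^{2+\alpha}(\bar\Omega)^k$, combined with this $L^2$ limit, upgrades the convergence to $C^2(\bar\Omega)$.

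The only real obstacle is the \emph{a priori} $L^\infty$ bound: the system is not cooperative, so the comparison argument must be done component-by-component and relies crucially on the sign conditions $a_{ij}\ge 0$ and the normalization $a_{ii}=1$. Once boundedness is in hand, the Lyapunov computation is transparent because the spatial constancy of $\mathbf{u}^*$ removes the mixed species-gradient cross terms that force the use of the Green-type identity in Lemma \ref{lemma2.3} for the heterogeneous case; the regularity supplied by Theorem \ref{th2.1} and the time-decay criterion of Lemma \ref{th2.2} then close the argument.
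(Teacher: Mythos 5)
Your Lyapunov functional, the integration by parts that kills the diffusion term, the reduction of the reaction term to the quadratic form governed by $QA+A^{T}Q$, the comparison argument for uniform bounds, and the closing step via Theorem \ref{th2.1}, Lemma \ref{th2.2} and precompactness in $C^{2+\alpha}(\ol\Omega)$ are exactly the extension of Goh's ODE argument that the paper has in mind: it gives no written proof of Theorem \ref{th4.1}, only the remark that ``the same Lyapunov functional method can be applied,'' and your computation mirrors the proofs of Theorem \ref{coro2.4} and Theorem \ref{th4.6}. In that sense the route is the intended one.

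There is, however, one step you assert that does real work and is not justified: the opening claim that, because the $m_i$ are constants, \emph{every} positive solution of \eqref{4a1} must be a constant vector solving $A\mathbf{u}^*=\mathbf{m}$. As a general principle this is false -- with constant coefficients the competition--diffusion system can admit nonconstant positive steady states (for instance in the strong-competition regime for suitable $d_i$ and $\Omega$) -- so constancy of the equilibrium cannot be read off from the constancy of the data alone; indeed, the possibility of nonconstant equilibria is precisely what forces the weighted functionals and the extra conditions $(\mathbf{F_3})$, \eqref{a.3} in the rest of the paper. Under the hypothesis on $QA+A^{T}Q$ the claim is true, but it must be earned: either interpret $\mathbf{u}^*$ as the positive constant equilibrium (this is Goh's setting and is how the paper actually invokes Theorem \ref{th4.1}, namely for the ODE system \eqref{4a2} in Proposition \ref{lemma4.4}), in which case your proof is complete as written; or argue it separately, e.g.\ set $\mathbf{c}=A^{-1}\mathbf{m}$ (note $A$ is invertible under the positive-definiteness assumption), verify $\mathbf{c}>0$, and then either test the $i$-th equation of \eqref{4a1} with $q_i(u_i^*-c_i)/u_i^*$ and sum -- the diffusion contributions are nonpositive because $c_i>0$ and the reaction contributions give the negative-definite quadratic form, forcing $\mathbf{u}^*\equiv\mathbf{c}$ -- or run your functional centered at $\mathbf{c}$ and observe that the stationary solution $\mathbf{u}^*$, being a globally defined orbit converging to $\mathbf{c}$, must coincide with $\mathbf{c}$. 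Note that the positivity of $A^{-1}\mathbf{m}$ does not follow automatically from the mere existence of a positive (possibly nonconstant) equilibrium, so this is a genuine point to address rather than a one-line remark.
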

Indeed the result in \cite{gbs1977} is only for ordinary differential equation model without diffusion, but the same Lyapunov functional method can be applied to prove the global stability with respect to \eqref{4.1}.
On the other hand, if one of $m_i(x)$ is not constant, the global stability of positive equilibrium of  \eqref{4.1} cannot be obtained directly from the method for proving Theorem \ref{th4.1}.

For the simplicity of notations,  we define, for $1\leq i\leq k$,
\bes\lf\{\begin{array}{lll}\label{4.3}
\dd m^-_i=\min_{x\in\ol\Omega} m_i(x)>0,\  \ m^+_i=\max_{x\in\ol\Omega} m_i(x)>0,\\[2mm]
\mathbf{m}^-=( m^-_1,..., m^-_k)^T,\ \ \mathbf{m}^+=( m^+_1,..., m^+_k)^T,\\[1mm]
A=(a_{ij})_{k\times k},\ \ B=A-I_k,
\end{array}\rr.
\ees
where $I_k$ is the $k\times k$ identity matrix. Clearly, the diagonal entries  of $B$ are 0 because of \eqref{aij}.

To study the global stability of positive equilibrium solution of the problem \eqref{4.1}, we make the following assumptions:\vspace{-1mm}
\begin{enumerate}[leftmargin=4em]
\item[$\mathbf{(F_1)}$]  The determinant  $\det{[A(2I_k-A)]}\neq 0$,
and the algebraic equations
\bess
\begin{bmatrix}
A & 0\\
0 &  A
\end{bmatrix}\mathbf{c}_*^T=\begin{bmatrix}
\mathbf{m}^-+(2I_k-A)^{-1}(\mathbf{m}^+-\mathbf{m}^-)\\[.2mm]
\mathbf{m}^+-(2I_k-A)^{-1}(\mathbf{m}^+-\mathbf{m}^-)
\end{bmatrix}
\eess
has a unique positive solution $\mathbf{c}_*:=(\bar c_1,...,\bar c_k, \underline c_1,...,\underline c_k)\in\mathbb{R}^{2k}$.
\vspace{-2mm}
\item[$\mathbf{(F_{2})}$] There exist two $k\times k$ diagonal matrices $Q_1$, $Q_2$ with positive constant entries such that
 both $Q_1$ and $4Q_2-(Q_2 B+B^T Q_1){Q_1}^{-1}(B^T Q_2 +Q_1 B)$ are positive definite.
\item[$\mathbf{(F_{3})}$] There exists a $k\times k$ diagonal matrix $Q_3$ with positive constant entries, such that
$Q_3(I_k-B-\mathbf{c}_1)+(I_k-B-\mathbf{c}_1)^T Q_3$ is positive definite,
where
\bes\label{c1}
    \mathbf{c}_1=\text{diag}\bigg(\frac{\bar c_1-\underline c_1}{\bar c_1},\frac{\bar c_2-\underline c_2}{\bar c_2},...,\frac{\bar c_k-\underline c_k}{\bar c_k}\bigg),
\ees
  and $\bar c_i$ and $\underline c_i$ for $1\leq i\leq k$ are given by $\mathbf{(F_1)}$.
\end{enumerate}

We will prove that if the assumptions $\mathbf{(F_1)}$ and one of $\mathbf{(F_{2})}$ and $\mathbf{(F_{3})}$ are satisfied, then the elliptic problem \eqref{4a1} has a positive  solution and the system \eqref{4.1} is permanent. Especially, if $\mathbf{(F_{1})}$ and $\mathbf{(F_{3})}$ hold, then the following Theorem \ref{th4.6} shows that the positive equilibrium solution is unique and  globally asymptotically stable.

We first give the estimates of positive solutions $(u_1,...,u_k)$ of \eqref{4.1} by the upper and lower solutions method. Let $(\bar u_1(t),...,\bar u_k(t),\underline u_1(t),...,\underline u_k(t))$ be the
unique solution of
\begin{equation}\label{4a2}
\begin{cases}
\dd \bar u_i'=\bar u_i\bigg( m^+_i-\bar u_i-\sum_{1\leq j\leq k,\,j\neq i} a_{ij}\underline u_{j}\bigg), & t>0,\, i=1,..., k,\\[4mm]
\dd \underline u_i'=\underline u_i\bigg( m^-_i-\underline u_i-\sum_{1\leq j\leq k,\,j\neq i} a_{ij}\bar u_{j}\bigg), &t>0,\, i=1,..., k,\\[4mm]
\bar u_i(0)=\dd\max_{x\in\ol\Omega}\varphi_{i}(x),\ \underline u_i(0)=\dd\min_{x\in\ol\Omega}\varphi_{i}(x),&i=1,..., k.
 \end{cases}
\end{equation}
 Here, without loss of generality, we can assume $\phi_i(x)>0$ on $\ol\Omega$ since the solution $u_i(x,t)$ of  \eqref{4.1} is positive for any $t>0$ which can be easily obtained by applying upper and lower solutions method \cite[Theorem 8.1]{pcv1992} and  Hopf's  Lemma for parabolic equations.
Then $(\bar u_1(t),...,\bar u_k(t))$ and $(\underline u_1(t),...,\underline u_k(t))$ are a pair of coupled ordered upper and lower solutions of  \eqref{4.1} and
\begin{equation}\label{4.6}
0<\underline u_i(t)\leq u_i(x,t)\leq \bar u_i(t),\ \ \forall\;x\in\ol\Omega,\;t>0.
\end{equation}

Before giving the  estimates of the positive solutions of \eqref{4.1}, we  recall some preliminary results about matrices. For any $k\times k$ matrices $M$, $N$, $P$ and $R$, the following results hold (See, e.g., \cite[Page 104 and 149-150]{hees1990}):
\begin{align}
&\det{\small\Big[\begin{array}{ll}
M &N \\[-.2mm] P &R
\end{array}\Big]}=\det(M)\det(R-PM^{-1}N)= \det(R)\det(M-NR^{-1}P),\label{4.7} \\
&{\small\Big[\begin{array}{ll}
M &N \\[-.2mm] N^{T} &R
\end{array}\Big]}\ {\rm is\ positive\ definite} \ \Longleftrightarrow \ \mbox{both} \
M \ {\rm and} \ R-N^TM^{-1}N\ {\rm are\  positive\ definite},\label{4.8}\\
& {\rm If} \ M\ {\rm is\ positive\ definite, \ then} \ xM {x}^T\geq \ \var {x}{x}^T\ for\ all\ x\in \mathbb{R}^k\ and\ some\ \var>0.\label{4.9}
\end{align}

Especially when $N=N^{T}$ and $M=R$, we have the following elementary Lemma.
\begin{lemma}\label{lemma-positive}Let $M$ and $N$ be two $k\times k$ symmetric matrices. Then $M+N$ and $M-N$ are positive definite if and only if $H={\small\Big[\begin{array}{ll}
M & N\\[-.2mm] N &M
\end{array}\Big]}$ is positive definite.
 \vspace{-1mm}\end{lemma}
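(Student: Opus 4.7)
The plan is to show that $H$ is orthogonally similar to a block-diagonal matrix with diagonal blocks $M+N$ and $M-N$, from which the stated equivalence is immediate. First I would introduce the $2k\times 2k$ orthogonal matrix
\[
P=\frac{1}{\sqrt{2}}\begin{pmatrix} I_k & I_k\\ I_k & -I_k\end{pmatrix},
\]
and compute $P^{T}HP$ directly. Using the symmetry of $M$ and $N$, the off-diagonal blocks cancel and one obtains
\[
P^{T}HP=\begin{pmatrix} M+N & 0\\ 0 & M-N\end{pmatrix}.
\]
Because $P$ is orthogonal, $H$ and $P^{T}HP$ have the same spectrum (and the same signature), so $H$ is positive definite if and only if the block-diagonal matrix on the right is, which in turn is equivalent to both $M+N$ and $M-N$ being positive definite. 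This delivers both directions of the equivalence simultaneously.

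Equivalently, and perhaps more elementarily, I would work at the level of the quadratic form. For any test vector $\mathbf{z}=(\mathbf{x}^{T},\mathbf{y}^{T})^{T}\in\R^{2k}$, using $N^{T}=N$,
\[
\mathbf{z}^{T}H\mathbf{z}=\mathbf{x}^{T}M\mathbf{x}+2\mathbf{x}^{T}N\mathbf{y}+\mathbf{y}^{T}M\mathbf{y}.
\]
Substituting $\mathbf{u}=(\mathbf{x}+\mathbf{y})/\sqrt{2}$ and $\mathbf{v}=(\mathbf{x}-\mathbf{y})/\sqrt{2}$, which is a linear bijection of $\R^{2k}$ onto itself, this becomes $\mathbf{u}^{T}(M+N)\mathbf{u}+\mathbf{v}^{T}(M-N)\mathbf{v}$. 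Since $(\mathbf{u},\mathbf{v})$ ranges over all of $\R^{2k}$ as $(\mathbf{x},\mathbf{y})$ does, positive definiteness of $H$ is equivalent to this sum being strictly positive for every nonzero $(\mathbf{u},\mathbf{v})$, which in turn is equivalent to the positive definiteness of both $M+N$ and $M-N$.

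There is no substantive obstacle here: the only step that requires any thought is spotting the diagonalizing change of basis $P$ (equivalently, the sum/difference substitution). Once that is written down, the verification is a one-line matrix multiplication, and the conclusion follows from the standard fact already used in the paper via \eqref{4.8} that a block-diagonal symmetric matrix is positive definite precisely when each of its diagonal blocks is. The symmetry of both $M$ and $N$ is essential: it kills the cross terms $\mathbf{u}^{T}N\mathbf{v}-\mathbf{v}^{T}N\mathbf{u}$ in the change of variables and ensures that $H$ itself is symmetric so that positive definiteness makes unambiguous sense.
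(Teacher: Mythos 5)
Your proposal is correct, and at its heart it uses the same decomposition as the paper: the identity $\mathbf{z}^{T}H\mathbf{z}=\mathbf{u}^{T}(M+N)\mathbf{u}+\mathbf{v}^{T}(M-N)\mathbf{v}$ with $\mathbf{u},\mathbf{v}$ the (normalized) sum and difference of the two block components is exactly what the paper's chain of manipulations in \eqref{4.9aa} arrives at. The difference is in how the equivalence is then extracted. The paper only uses the identity in the ``if'' direction, and proves the converse by a separate contradiction argument, testing $H$ against the vector $(-\mathbf{\tilde x},\mathbf{\tilde x})$ built from a vector witnessing the failure of positive definiteness of $M-N$. You instead observe that the sum/difference substitution is an invertible (indeed orthogonal) change of variables, i.e.\ $P^{T}HP=\operatorname{diag}(M+N,\,M-N)$ with $P$ orthogonal, so both directions follow at once from the fact that congruence (here even similarity) preserves positive definiteness and that a block-diagonal symmetric matrix is positive definite precisely when its blocks are. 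Your packaging is cleaner and slightly stronger: since $P$ is orthogonal you get that $H$ and $\operatorname{diag}(M+N,M-N)$ share the same spectrum, not just the same definiteness, whereas the paper's term-by-term expansion and case split prove only what is needed. Both arguments use the symmetry of $N$ in the same essential way, to merge the two cross terms.
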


\begin{proof}
(i) Suppose that $M+N$ and $M-N$ are positive definite. We will show that $H$ is positive definite. Set $\mathbf{x}_1,\mathbf{x}_2\in \mathbb{R}^{k}$, $X=(\mathbf{x}_1,\mathbf{x}_2)$ and $X\neq 0$, then we have
\bes
X HX^T&=& \mathbf{x}_1 M \mathbf{x}_1^T+\mathbf{x}_2 M \mathbf{x}_2^T+2\mathbf{x}_1 N \mathbf{x}_2^T\nm\\[1mm]
&=& \mathbf{x}_1 (M+N) \mathbf{x}_1^T+\mathbf{x}_2 (M+N) \mathbf{x}_2^T+2\mathbf{x}_1 N \mathbf{x}_2^T-\mathbf{x}_1 N \mathbf{x}_1^T-\mathbf{x}_2 N \mathbf{x}_2^T\nm\\[1mm]
&=& \mathbf{x}_1 (M+N) \mathbf{x}_1^T+\mathbf{x}_2 (M+N) \mathbf{x}_2^T+\mathbf{x}_1 N (\mathbf{x}_2-\mathbf{x}_1)^T+(\mathbf{x}_1-\mathbf{x}_2) N \mathbf{x}_2^T\nm\\[1mm]
&=& \mathbf{x}_1 (M+N) \mathbf{x}_1^T+\mathbf{x}_2 (M+N) \mathbf{x}_2^T-(\mathbf{x}_2-\mathbf{x}_1)N (\mathbf{x}_2-\mathbf{x}_1)^T\nm\\[1mm]
&=& \mathbf{x}_1 \frac{M+N}{2} \mathbf{x}_1^T+\mathbf{x}_2 \frac{M+N}{2} \mathbf{x}_2^T-2\mathbf{x}_1 \frac{M+N}{2} \mathbf{x}_2^T+\mathbf{x}_1 \frac{M+N}{2} \mathbf{x}_1^T\nm\\[1mm]
&&+\mathbf{x}_2 \frac{M+N}{2} \mathbf{x}_2^T+2\mathbf{x}_1 \frac{M+N}{2} \mathbf{x}_2^T-(\mathbf{x}_2-\mathbf{x}_1)N (\mathbf{x}_2-\mathbf{x}_1)^T\nm\\[1mm]
&=&(\mathbf{x}_2-\mathbf{x}_1) \frac{M+N}{2} (\mathbf{x}_2-\mathbf{x}_1)^T+(\mathbf{x}_2+\mathbf{x}_1) \frac{M+N}{2}(\mathbf{x}_2+\mathbf{x}_1)^T\nm\\[1mm]
&&-(\mathbf{x}_2-\mathbf{x}_1)N (\mathbf{x}_2-\mathbf{x}_1)^T\nm\\[1mm]
&=&(\mathbf{x}_2-\mathbf{x}_1) \frac{M-N}{2} (\mathbf{x}_2-\mathbf{x}_1)^T+(\mathbf{x}_2+\mathbf{x}_1) \frac{M+N}{2} (\mathbf{x}_2+\mathbf{x}_2)^T>0.\label{4.9aa}
\ees

(ii) Assume that the matrix $H$ is positive definite. Suppose on the contrary that $M+N$ or $M-N$  is not positive definite. Without loose of generality, we assume that $M-N$  is not positive definite.  Then there exists $0\neq\mathbf{\tilde x}\in \mathbb{R}^k$ such that $ \mathbf{\td x}(M-N)\mathbf{\td x}^T\leq 0$. Let $\mathbf{\td X}=(- \mathbf{\td x}, \mathbf{\td x})$. From \eqref{4.9aa} we have
\bess
\mathbf{\td X} H \mathbf{\td X}^T&=&(\mathbf{\td x}+\mathbf{\td x}) \frac{M-N}{2} (\mathbf{\td x}+\mathbf{\td x})^T+(\mathbf{\td x}-\mathbf{\td x}) \frac{M+N}{2} (\mathbf{\td x}-\mathbf{\td x})^T\\[1mm]
&=& 4\mathbf{\td  x}\frac{M-N}{2} \mathbf{\td x}^T\leq 0,
\eess
which contradicts to the fact that $H$ is positive definite. Thus $M+N$ and $M-N$  are positive definite. The proof is finished.
\end{proof}

\begin{corollary}\label{coro4.3} If there exists a $k\times k$ diagonal matrix $Q_4$ with positive constant entries such that
  \bes\label{4.9a}
  Q_4(I_k-B)+(I_k-B)^T Q_4\ \ {\rm is\ positive\ definite},
  \ees
where $B$ is given by \eqref{4.3}. Then
$Q_5 {\small\Big[\begin{array}{ll}
I_k & B\\[-.2mm]
B & I_k
\end{array}\Big]}+{\small\Big[\begin{array}{ll}
I_k & B\\[-.2mm]
B & I_k
\end{array}\Big]}^TQ_5$ is positive definite, where $Q_5={\small\Big[\begin{array}{cc}
Q_4 & 0\\[-.2mm]
0 & Q_4
\end{array}\Big]}$ is a $2k\times 2k$ diagonal matrix having positive constant entries.
\end{corollary}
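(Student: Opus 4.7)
My plan is to reduce the claim to a direct application of Lemma \ref{lemma-positive}. The first step I will carry out is the block multiplication
\begin{equation*}
Q_5\begin{bmatrix} I_k & B \\ B & I_k \end{bmatrix}+\begin{bmatrix} I_k & B \\ B & I_k \end{bmatrix}^{T} Q_5=\begin{bmatrix} 2Q_4 & Q_4B+B^TQ_4 \\ Q_4B+B^TQ_4 & 2Q_4 \end{bmatrix},
\end{equation*}
which exhibits the target as a symmetric $2k\times 2k$ block matrix of exactly the form appearing in Lemma \ref{lemma-positive}, with $M=2Q_4$ and $N=Q_4B+B^TQ_4$, both $k\times k$ and symmetric. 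By that lemma it suffices to verify that the two $k\times k$ matrices $M-N$ and $M+N$ are positive definite.

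The first of these is $M-N=Q_4(I_k-B)+(I_k-B)^TQ_4$, which is positive definite by the standing assumption \eqref{4.9a}, so that half is immediate. The main obstacle, as I see it, will be the positive definiteness of $M+N=Q_4(I_k+B)+(I_k+B)^TQ_4$: it does not follow from abstract linear algebra alone, since adding a symmetric matrix with nonnegative entries to a positive definite one need not preserve positive definiteness. To surmount this I plan to exploit two pieces of structural information, namely that $B$ has nonnegative entries by \eqref{aij} (so that $N$ has nonnegative entries too) and that $Q_4$ is positive diagonal.

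The key step will be the following sign-flip estimate. For arbitrary $\mathbf{x}\in\mathbb{R}^k\setminus\{0\}$, write $|\mathbf{x}|$ for the vector of componentwise absolute values. Nonnegativity of the entries of $N$ gives $|\mathbf{x}^T N\mathbf{x}|\le|\mathbf{x}|^T N|\mathbf{x}|$ by the triangle inequality, while diagonality of $Q_4$ gives $\mathbf{x}^T Q_4\mathbf{x}=|\mathbf{x}|^T Q_4|\mathbf{x}|$. Applying the hypothesis \eqref{4.9a} to the test vector $|\mathbf{x}|$ then yields $|\mathbf{x}|^T N|\mathbf{x}|<2|\mathbf{x}|^T Q_4|\mathbf{x}|$, and chaining these three relations produces $\mathbf{x}^T(2Q_4+N)\mathbf{x}>0$. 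This will establish the positive definiteness of $M+N$, after which Lemma \ref{lemma-positive} immediately concludes the proof.
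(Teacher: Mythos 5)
Your proposal is correct and follows essentially the same route as the paper: the paper also reduces the claim to Lemma \ref{lemma-positive} (with $M=2Q_4$, $N=Q_4B+B^TQ_4$, so that $M-N$ is positive definite by \eqref{4.9a}) and establishes positivity of $M+N=Q_4(I_k+B)+(I_k+B)^TQ_4$ by the identical sign-flip estimate, comparing $\mathbf{x}$ with the vector $\hat{\mathbf{x}}$ of componentwise absolute values and using the nonnegativity of the entries of $B$ together with the diagonality of $Q_4$. No gaps; your argument matches the paper's proof step for step.
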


\begin{proof}
For $\mathbf{x}=(x_1,...,x_n),\ \hat{\mathbf{x}}=(|x_1|,...,|x_n|)$ and $\mathbf{x}\neq \mathbf{0}$. It then follows from \eqref{4.9a} that
\begin{equation*}
\begin{split}
\mathbf{x}Q_4(I+B)\mathbf{x}^T+\mathbf{x}(I+B)^T Q_4\mathbf{x}^T&=2\mathbf{x}Q_4\mathbf{x}^T+\mathbf{x}Q_4B\mathbf{x}^T+\mathbf{x}B^T Q_4\mathbf{x}^T\\
&\geq 2\mathbf{x}Q_4\mathbf{x}^T-\hat{\mathbf{x}}Q_4B\hat{\mathbf{x}}^T-\hat{\mathbf{x}}B^T Q_4\hat{\mathbf{x}}^T\\
&=\hat{\mathbf{x}}Q_4(I-B)\hat{\mathbf{x}}^T+\hat{\mathbf{x}}(I-B)^T Q_4\hat{\mathbf{x}}^T>0.
\end{split}
\end{equation*}
This, combined with Lemma \ref{lemma-positive}, implies the desired conclusion.
\end{proof}

We recall that the system \eqref{4.1} is uniformly persistent    (See, e.g., \cite[Page 3]{hw1989}) if all solutions satisfy $\dd\liminf_{\ttt}u_i(x,t)>0$  for all $1\leq i\leq k$ and $x\in \bar{\Om}$, and it is permanent (See, e.g., \cite{hs1992}) if it also satisfies $\dd\limsup_{t\to\yy}u_i(x,t)\leq M$ for some $M>0$. Now we prove the following result which concerns with  the permanence property of \eqref{4.1},
and also give estimates of positive solution $(u_1,...,u_k)$ of \eqref{4.1}.

\begin{proposition}\label{lemma4.4} Assume that $\mathbf{(F_1)}$ and one of $\mathbf{(F_{2})}$ or \eqref{4.9a} holds. Then the problem \eqref{4a2} has a unique positive equilibrium $\mathbf{c}_*=(\bar c_1,...,\bar c_k, \underline c_1,...,\underline c_k)\in\mathbb{R}^{2k}$ with $\bar c_i\geq \underline c_i$, and $\mathbf{c}_*$ is globally asymptotically stable with respect to \eqref{4a2}. Moreover, the solution $(u_1,...,u_k)$ of \eqref{4.1} satisfies
\bes\label{4.10}
0<\underline c_i\leq \liminf_{t\to\yy}u_i(x,t)\leq \limsup_{t\to\yy}u_i(x,t)\leq\bar c_i,\ \ \forall\;x\in\ol\Omega,\;t>0,\; 1\leq i\leq k,
\ees
which implies that the problem \eqref{4.1} is permanent. In particular the problem \eqref{4a1} has a positive solution $(u_1^*(x),...,u_k^*(x))$ which satisfies
\begin{equation}\label{4.14}
\begin{split}
0<\underline c_i \leq u_i^*(x)\leq\bar c_i,\ \ \forall\ x\in\ol\Omega,\; 1\leq i\leq k.
\end{split}
\end{equation}
\end{proposition}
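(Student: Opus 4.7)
The plan is to recognize the ODE system \eqref{4a2} as a $2k$-dimensional Lotka-Volterra problem with a block-structured interaction matrix, invoke Theorem \ref{th4.1} to obtain global asymptotic stability of $\mathbf{c}_*$, and then transfer the resulting bounds to the parabolic problem \eqref{4.1} by comparison and to the elliptic system \eqref{4a1} by the coupled upper-lower solution method.

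First I would identify the positive equilibria of \eqref{4a2} as solutions of the linear system $\bar c + B\underline c = \mathbf{m}^+$, $\underline c + B\bar c = \mathbf{m}^-$. Adding and subtracting these yields $A(\bar c+\underline c)=\mathbf{m}^++\mathbf{m}^-$ and $(2I_k-A)(\bar c-\underline c)=\mathbf{m}^+-\mathbf{m}^-$, which after rearrangement is exactly the matrix equation in $\mathbf{(F_1)}$; so $\mathbf{(F_1)}$ supplies the unique positive equilibrium $\mathbf{c}_*$. The ordering $\bar c_i\geq\underline c_i$ will follow from a crossing-time comparison: at any putative first time $t_0$ where $\bar u_i(t_0)=\underline u_i(t_0)$ while $\bar u_j(t_0)\geq\underline u_j(t_0)$ for all $j$, one computes $(\bar u_i-\underline u_i)'(t_0)=\bar u_i(t_0)\bigl[m_i^+-m_i^-+\sum_{j\neq i}a_{ij}(\bar u_j-\underline u_j)(t_0)\bigr]\geq 0$, which forbids the crossing, so $\bar u_i(t)\geq\underline u_i(t)$ for all $t$ and passing to the limit gives $\bar c_i\geq\underline c_i$.

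Next I would cast \eqref{4a2} as $\mathbf{U}'=\mathbf{U}\ast(\mathbf{M}-K\mathbf{U})$ with $\mathbf{U}=(\bar u,\underline u)^T$, $\mathbf{M}=(\mathbf{m}^+,\mathbf{m}^-)^T$, and $K$ the $2k\times 2k$ block matrix whose diagonal blocks are $I_k$ and off-diagonal blocks are $B$. To apply Theorem \ref{th4.1} for the global asymptotic stability of $\mathbf{c}_*$ it suffices to produce a positive diagonal $Q$ making $QK+K^TQ$ positive definite. Under the hypothesis \eqref{4.9a}, Corollary \ref{coro4.3} provides exactly this with $Q=\mathrm{diag}(Q_4,Q_4)$. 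Under the hypothesis $\mathbf{(F_{2})}$, take $Q=\mathrm{diag}(Q_1,Q_2)$; then $QK+K^TQ$ has diagonal blocks $2Q_1,2Q_2$ and upper-right block $Q_1B+B^TQ_2$, and the Schur complement criterion \eqref{4.8} reduces its positive definiteness to $Q_1>0$ together with $4Q_2-(Q_2B+B^TQ_1)Q_1^{-1}(Q_1B+B^TQ_2)>0$, which is precisely the content of $\mathbf{(F_{2})}$. In either case Theorem \ref{th4.1} yields global asymptotic stability of $\mathbf{c}_*$ in \eqref{4a2}.

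Finally, the sandwich \eqref{4.6} combined with $(\bar u_i(t),\underline u_i(t))\to(\bar c_i,\underline c_i)$ delivers \eqref{4.10} and hence permanence. The constants $(\bar c_1,\dots,\bar c_k)$ and $(\underline c_1,\dots,\underline c_k)$ then themselves form a pair of coupled ordered upper and lower solutions for \eqref{4a1}: using $m_i(x)\leq m_i^+$ and the equilibrium identity $\bar c_i+\sum_{j\neq i}a_{ij}\underline c_j=m_i^+$, the inequality $d_i\Delta\bar c_i+\bar c_i(m_i(x)-\bar c_i-\sum_{j\neq i}a_{ij}\underline c_j)\leq 0$ is immediate, and symmetrically with $\underline c_i$ using $m_i(x)\geq m_i^-$; standard coupled upper-lower solution theory then produces a positive solution $(u_1^*,\dots,u_k^*)$ of \eqref{4a1} with $\underline c_i\leq u_i^*(x)\leq \bar c_i$. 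The main obstacle of the whole scheme is the reduction of hypotheses on the $k\times k$ matrices $B$ and $I_k-B$ to positive definiteness of the $2k\times 2k$ block matrix $QK+K^TQ$; the Schur complement identity \eqref{4.8} and Corollary \ref{coro4.3} are precisely the bridges that make Theorem \ref{th4.1} applicable.
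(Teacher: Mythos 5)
Your proposal is correct and follows essentially the same route as the paper: identify the unique positive equilibrium of \eqref{4a2} through the linear system in $\mathbf{(F_1)}$ (the paper's \eqref{4.10a}--\eqref{4.12}), verify positive definiteness of $Q\bigl[\begin{smallmatrix} I_k & B\\ B & I_k\end{smallmatrix}\bigr]+\bigl[\begin{smallmatrix} I_k & B\\ B & I_k\end{smallmatrix}\bigr]^TQ$ via the Schur complement \eqref{4.8} under $\mathbf{(F_2)}$ or via Corollary \ref{coro4.3} under \eqref{4.9a} so that Theorem \ref{th4.1} gives global stability of $\mathbf{c}_*$, then combine with the sandwich \eqref{4.6} for \eqref{4.10} and invoke the coupled ordered upper--lower solution theorem of Pao for \eqref{4a1}. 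Your explicit invariance (crossing-time) argument for $\bar c_i\geq\underline c_i$ is a small welcome addition that the paper leaves implicit, but it does not change the overall approach.
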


\begin{proof} Since $B=A-I_k$, we have $I_k-B^2=I_k-(A-I_k)^2=A(2I_k-A)$. From $\mathbf{(F_1)}$, we have  $\det(I_k-B^2)=\det[A(2I_k-A)]\neq0$, then \eqref{4a2} has a unique equilibrium $\mathbf{c}_*\in\mathbb{R}^{2k}$ satisfying
\bes\label{4.10a}
 \begin{bmatrix}
I_k & B\\
B & I_k
\end{bmatrix}\mathbf{c}_*^T=\begin{bmatrix}\mathbf{m}^+\\
\mathbf{m}^-\end{bmatrix},
 \ees
where $I_k$, $B$, $\mathbf{m}^-$ and $\mathbf{m}^+$ are given by \eqref{4.3}. Denote
\bess
 B_1=\begin{bmatrix}
(I_k-B)^{-1} & -(I_k-B)^{-1}+I_k\\[1mm]
 -(I_k-B)^{-1}+I_k &  (I_k-B)^{-1}
\end{bmatrix}.
 \eess
It follows from \eqref{4.7} that $\det B_1=\det (I_k-B)^{-1}\det (I_k+B)\neq 0$.  Then multiplying the equation \eqref{4.10a} by $B_1$ on the left, we have
\bes\label{4.12}
\begin{bmatrix}
A & 0\\
0 &  A
\end{bmatrix}\mathbf{c}_*^T=\begin{bmatrix}
\mathbf{m}^-+(I_k-B)^{-1}(\mathbf{m}^+-\mathbf{m}^-)\\[1mm]
\mathbf{m}^+-(I_k-B)^{-1}(\mathbf{m}^+-\mathbf{m}^-)
\end{bmatrix}.
 \ees
Thanks to $A=I_k+B$ and $\mathbf{(F_1)}$, it yields $\underline c_i,\ \bar c_i>0$, i.e.,  each element of the vector $\mathbf{c}_*$ is positive.

When $\mathbf{(F_1)}$ and $\mathbf{(F_{2})}$ hold, let $Q_1$, $Q_2$ be given by $\mathbf{(F_{2})}$, and $Q=(Q_1, Q_2)$. Then $Q$ is a $2k\times 2k$ diagonal matrix with positive constant entries. The direct calculation yields
 \bess
E:=Q \begin{bmatrix}
I_k & B\\
B & I_k
\end{bmatrix}+\begin{bmatrix}
I_k & B\\
B & I_k
\end{bmatrix}^TQ=\begin{bmatrix}
2Q_1 & Q_1B+B^TQ_2\\[1mm]
Q_2B+B^TQ_1 & 2Q_2
\end{bmatrix}.
\eess
Owing to $\mathbf{(F_{2})}$ and \eqref{4.8} we see that the matrix $E$ is positive definite. When $\mathbf{(F_1)}$ and \eqref{4.9a} hold, let  $Q={\small\Big[\begin{array}{cc}
	Q_4 & 0\\[-.2mm]
	0 & Q_4
	\end{array}\Big]}$, where $Q_4$ is given by \eqref{4.9a}. Then $Q$ is a $2k\times 2k$ diagonal matrix with positive constant entries. By Corollary \ref{coro4.3} we know that the matrix $E$ is positive definite. In conclusion, the positive equilibrium $\mathbf{c}_*$ is globally asymptotically stable with respect to \eqref{4a2} by Theorem \ref{th4.1}. This, combined with \eqref{4.6}, implies \eqref{4.10}.

Since $(\bar c_1,...,\bar c_k, \underline c_1,...,\underline c_k)$, with $0<\underline c_i\leq \bar c_i$ for $1\leq i\leq k$, is the unique positive equilibrium of \eqref{4a2}, and so $(\bar c_1,...,\bar c_k)$ and $(\underline c_1,...,\underline c_k)$ are a pair of coupled ordered upper and lower solutions of \eqref{4a1}.
By \cite[Theorem 10.2, Page\,440]{pcv1992} we know that the problem \eqref{4a1} has a positive
solution and \eqref{4.14} holds.\end{proof}

Now we have the following result on the global stability of the positive equilibrium of \eqref{4.1}.
\begin{theorem}\label{th4.6}\, Let $\mathbf{(F_1)}$ and $\mathbf{(F_3)}$ hold.
Then the problem \eqref{4a1} has a unique positive solution $(u_1^*(x),\cdots,u_k^*(x))$, and  every solution of \eqref{4.1} satisfies $\dd\lim_{t\to\yy}u_i(x,t)=u_i^*(x)$ in $C^2(\ol\Omega)$ for $1\leq i\leq k$.
\end{theorem}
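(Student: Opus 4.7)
The plan is to adapt the weighted Lyapunov functional strategy of Theorem \ref{coro2.4} and Theorem \ref{th1.2} to the $k$-species setting, using the permanence bounds from Proposition \ref{lemma4.4} to reduce a spatially varying quadratic form to the constant-matrix hypothesis $\mathbf{(F_3)}$. For existence, since $\mathbf{c}_1$ is diagonal with nonnegative entries and commutes with $Q_3$,
\bess
Q_3(I_k-B)+(I_k-B)^T Q_3=\bigl[Q_3(I_k-B-\mathbf{c}_1)+(I_k-B-\mathbf{c}_1)^T Q_3\bigr]+2Q_3\mathbf{c}_1,
\eess
in which the first bracket is positive definite by $\mathbf{(F_3)}$ and $2Q_3\mathbf{c}_1$ is positive semidefinite; hence \eqref{4.9a} holds with $Q_4=Q_3$. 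Proposition \ref{lemma4.4} then produces a positive solution $\mathbf{u}^*(x)=(u_1^*,\cdots,u_k^*)$ of \eqref{4a1} with $0<\underline c_i\leq u_i^*(x)\leq\bar c_i$ on $\ol\Omega$, and \eqref{4.1} is permanent.

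Writing $Q_3=\text{diag}(q_1,\cdots,q_k)$, I would define the Lyapunov functional
\bess
F(t)=\sum_{i=1}^k \frac{q_i d_i}{\bar c_i}\int_\Omega \int_{u_i^*(x)}^{u_i(x,t)}\frac{u_i^*(x)}{d_i}\cdot\frac{s-u_i^*(x)}{s}\,\rd s\,\rd x\ge 0.
\eess
Differentiating along \eqref{4.1}, substituting the identity $m_i-\sum_j a_{ij}u_j=-\sum_j a_{ij}w_j-d_i\Delta u_i^*/u_i^*$ (with $w_j=u_j-u_j^*$) derived from the equilibrium equation, and applying Lemma \ref{lemma2.3}(i) with $c\equiv 1$, $a\equiv 1$, $\beta=1$ to each diffusion integral yields
\bess
F'(t)\le -\sum_i \frac{q_i d_i}{\bar c_i}\int_\Omega u_i^2\Big|\nabla\frac{u_i^*}{u_i}\Big|^2\rd x-\int_\Omega \sum_{i,j}\frac{q_iu_i^*(x)}{\bar c_i}a_{ij}w_iw_j\,\rd x.
\eess
For the last integrand, the bounds $\underline c_i\le u_i^*(x)\le \bar c_i$ together with $a_{ij}\ge 0$ for $i\neq j$ give the pointwise estimate
\bess
\sum_{i,j}\frac{q_iu_i^*(x)}{\bar c_i}a_{ij}w_iw_j\ge \sum_i \frac{q_i\underline c_i}{\bar c_i}w_i^2-\sum_{i\ne j}q_i a_{ij}|w_i||w_j|=\boldsymbol{\alpha}^T M\boldsymbol{\alpha},
\eess
with $\alpha_i=|w_i|$; a direct computation, using $I_k-\mathbf{c}_1=\text{diag}(\underline c_i/\bar c_i)$ and $B_{ij}=a_{ij}$ for $i\ne j$, gives $2M=Q_3(I_k-B-\mathbf{c}_1)+(I_k-B-\mathbf{c}_1)^T Q_3$, which is positive definite by $\mathbf{(F_3)}$. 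Consequently $F'(t)\le -\delta\int_\Omega|\mathbf{u}-\mathbf{u}^*|^2\rd x$ for some $\delta>0$.

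Uniqueness of $\mathbf{u}^*$ then follows at once: any second positive equilibrium $\tilde{\mathbf{u}}^*$, taken as initial data, gives a stationary solution on which $F$ is constant, forcing $\int_\Omega|\tilde{\mathbf{u}}^*-\mathbf{u}^*|^2\rd x=0$. For the asymptotic statement, Theorem \ref{th2.1} supplies uniform $C^{2+\alpha}$ bounds on $u_i(\cdot,t)$ for $t\ge 1$, so $\psi(t):=\int_\Omega|\mathbf{u}(\cdot,t)-\mathbf{u}^*|^2\rd x$ is Lipschitz on $[1,\infty)$; Lemma \ref{th2.2} (with $h\equiv 0$) then yields $\psi(t)\to 0$, and the relative compactness of $\{\mathbf{u}(\cdot,t):t\ge 1\}$ in $C^2(\ol\Omega)^k$ upgrades this $L^2$ convergence to $C^2(\ol\Omega)$, exactly as in the proof of Theorem \ref{coro2.4}. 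The main obstacle is the matrix identity in the middle paragraph: the weighting $q_id_i/\bar c_i$ must be chosen so that the $x$-dependent coefficient $u_i^*(x)/\bar c_i$ can be pinched between $\underline c_i/\bar c_i$ and $1$ and produce exactly the combination $I_k-B-\mathbf{c}_1$ appearing in $\mathbf{(F_3)}$---the role of $\mathbf{c}_1$ is precisely to encode the a priori oscillation $1-\underline c_i/\bar c_i$ of $u_i^*$, which vanishes in the homogeneous case where $\bar c_i=\underline c_i$.
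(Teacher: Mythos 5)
Your proposal is correct and follows essentially the same route as the paper's proof: the same weighted Lyapunov functional (your weights $q_id_i/\bar c_i$ correspond to the paper's $\varepsilon_i$ with $\varepsilon_i\bar c_i=q_i$), the same pinching $\underline c_i\le u_i^*(x)\le \bar c_i$ from Proposition \ref{lemma4.4} to reduce the spatially varying quadratic form to $Q_3(I_k-B-\mathbf{c}_1)+(I_k-B-\mathbf{c}_1)^TQ_3$, and the same use of Theorem \ref{th2.1}, Lemma \ref{th2.2} and compactness to upgrade $L^2$ decay to $C^2(\ol\Omega)$ convergence. You merely make explicit two steps the paper leaves implicit, namely that $\mathbf{(F_3)}$ implies \eqref{4.9a} and that global convergence forces uniqueness of the positive equilibrium.
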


\begin{proof} Note that $\mathbf{(F_3)}$ implies \eqref{4.9a}. By Proposition \ref{lemma4.4},  the problem \eqref{4a1} has at least one positive solution, denoted by $(u_1^*(x),...,u_k^*(x))$. Let $(u_1,...,u_k)$ be the solution of  \eqref{4.1}.
 Define a functional $F: [0,\infty)\to \mathbb{R}$ by
$$F(t)=\dd\sum_{ i=1}^{k}{\varepsilon}_i\int_{\Om}\int_{u_i^*(x)}^{u_i(x,t)} u_i^*(x)\frac{s-u_i^*(x)}{s} {\rd}s{\rd}x,$$
where $\varepsilon_i>0$ will be determined later.  By \eqref{2.6}, \eqref{2.8b1} and \eqref{4.14},
\begin{equation*}
\begin{split}
\frac{dF}{dt}\leq&-\dd \int_{\Om}\bigg(\sum_{ i=1}^{k}\varepsilon_id_iu_i^2\bigg|\nabla \frac{u_i^*}{u_i}\bigg|^2+\sum_{1\leq i,j\leq k}\varepsilon_iu_i^*a_{ij} (u_i-u_i^*)(u_j-u_j^*)\bigg){\rd}x\\[.1mm]
\leq&\int_{\Om}\bigg(-\sum_{ i=1}^{k}\var_i \underline c_i(u_i-u_i^*)^2
+\sum_{i\neq j}\var_i\bar c_i a_{ij} |u_i-u_i^*|\cdot|u_j-u_j^*|\bigg){\rd}x\\[.1mm]
=& \int_{\Om} \bigg(-\sum_{ i=1}^{k} \var_i \bar c_i(u_i-u_i^*)^2
+\sum_{i\neq j}\varepsilon_i\bar c_i a_{ij} |u_i-u_i^*|\cdot|u_j-u_j^*|\bigg){\rd}x\\[.1mm]
&+\int_{\Om}\sum_{ i=1}^{k} \var_i (\bar c_i-\underline c_i)(u_i-u_i^*)^2{\rd}x\\[.1mm]
=&\dd -\frac{1}{2}\int_{\Om}\mathbf{U}_1[Q(I_k-B-\mathbf{c}_1)
+(I_k-B-\mathbf{c}_1)^TQ]\mathbf{U}_1^T{\rd}x,
\end{split}
\end{equation*}
where $\mathbf{c}_1$ is defined as in \eqref{c1},
 \bess
 \mathbf{U}_1=(|u_1-u_1^*|,...,|u_k-u_k^*|),\;\;
 Q=\text{diag}(\var_1 \bar c_1,\var_2 \bar c_2,...,\var_k \bar c_k).
  \eess
Take $\varepsilon_i>0$ such that $Q=Q_3$ is given by $\mathbf{(F_3)}$. It then follows from the
assumption $\mathbf{(F_3)}$ and fact \eqref{4.9} that there exists $\delta>0$ such that
\bess
\frac{dF}{dt}\leq -\frac{\delta}{2} \int_{\Om}\sum_{ i=1}^{k} (u_i-u_i^*)^2 {\rd}x\leq 0.
\eess
 Now from Theorem \ref{th2.1} and Lemma \ref{th2.2}, similar to the proof of Corollary  \ref{coro2.4}, we get $\dd\lim_{t\to\yy}u_i(x,t)=u_i^*(x)$  in $C^2(\ol\Omega)$
for $1\leq i\leq k$. The proof is finished.
\end{proof}
Note that the condition $\mathbf{(F_2)}$ is weaker than $\mathbf{(F_3)}$. With the condition $\mathbf{(F_2)}$, the system \eqref{4.1} is permanent and has a positive equilibrium, but it is not clear whether the positive equilibrium is unique and globally asymptotically stable. The condition $\mathbf{(F_3)}$ ensures the uniqueness and global stability of the positive equilibrium.
We give an application of Theorem \ref{th4.6} to more specific resource functions.
\begin{corollary}\label{coro4.8}
Assume  $m_i(x)=1+\varepsilon f_i(x)$ with $f_i\in C^{\alpha}(\ol\Omega)$ satisfies  $|f_i(x)|\leq 1$ on $\ol\Omega$.
If
\begin{enumerate}
\item [{\rm(i)}] $A$ is a symmetric and diagonally dominant matrix;

\item [{\rm(ii)}] The vector $A^{-1}\mathbf{v}^T$ has positive entries, where $\mathbf{v}=(1,1,...,1)\in \mathbb{R}^n$.
\end{enumerate}
Then there exists a positive constant $\varepsilon_0$ such that for any $0<\varepsilon<\varepsilon_0$,  the problem \eqref{4a1} has a unique positive solution which is globally asymptotically stable with respect to the problem \eqref{4.1}.
\end{corollary}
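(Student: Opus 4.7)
The plan is to apply Theorem \ref{th4.6} via a perturbation argument anchored at $\varepsilon=0$. I will verify $(\mathbf{F_1})$ and $(\mathbf{F_3})$ directly at $\varepsilon = 0$ using hypotheses (i)–(ii), and then use continuity in $\varepsilon$ together with the fact that strict positivity and positive definiteness are open conditions to extend the verification to a small interval $0 < \varepsilon < \varepsilon_0$.

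At $\varepsilon = 0$ we have $\mathbf{m}^+ = \mathbf{m}^- = \mathbf{v}^T$, so the system defining $\mathbf{c}_*$ in $(\mathbf{F_1})$ collapses to $A\bar{\mathbf{c}}^T = A\underline{\mathbf{c}}^T = \mathbf{v}^T$. Hypothesis (ii) supplies $\bar{\mathbf{c}} = \underline{\mathbf{c}} = A^{-1}\mathbf{v}^T > 0$, whence $\mathbf{c}_1 = 0$ by \eqref{c1}. For the invertibility requirement in $(\mathbf{F_1})$, hypothesis (ii) gives $A$ invertible; combined with $A$ symmetric, $a_{ii}=1$, $a_{ij}\ge 0$ and diagonally dominant this makes $A$ positive definite. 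The same structural features together with diagonal dominance ensure that $2I_k - A$ (symmetric, diagonal $1$, off-diagonals $-a_{ij}$) is positive definite. Thus $\det[A(2I_k-A)] \neq 0$. Since $A$ does not depend on $\varepsilon$, this part of $(\mathbf{F_1})$ holds for every $\varepsilon \ge 0$; the positivity part of $(\mathbf{F_1})$ will follow by the continuity argument below.

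For $(\mathbf{F_3})$ at $\varepsilon = 0$ I take $Q_3 = I_k$. Since $B = A - I_k$ is symmetric,
\bess
Q_3(I_k - B - \mathbf{c}_1) + (I_k - B - \mathbf{c}_1)^T Q_3 \Big|_{\varepsilon = 0} = 2(I_k - B) = 2(2I_k - A),
\eess
which is positive definite by the previous step. Now using the explicit formulas
\bess
\bar{\mathbf{c}}^T = A^{-1}\mathbf{m}^- + A^{-1}(2I_k-A)^{-1}(\mathbf{m}^+-\mathbf{m}^-), \qquad \underline{\mathbf{c}}^T = A^{-1}\mathbf{m}^+ - A^{-1}(2I_k-A)^{-1}(\mathbf{m}^+-\mathbf{m}^-),
\eess
together with the bound $\|\mathbf{m}^\pm - \mathbf{v}^T\|_\infty \leq \varepsilon$, one obtains $\bar{\mathbf{c}}, \underline{\mathbf{c}} \to A^{-1}\mathbf{v}^T$ and $\mathbf{c}_1 \to 0$ as $\varepsilon \to 0^+$. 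Because strict positivity of the finitely many entries $\bar c_i, \underline c_i$ and positive definiteness of the symmetric matrix $Q_3(I_k - B - \mathbf{c}_1) + (I_k - B - \mathbf{c}_1)^T Q_3$ are both open conditions, there exists $\varepsilon_0 > 0$ such that $(\mathbf{F_1})$ and $(\mathbf{F_3})$ simultaneously hold for all $0 < \varepsilon < \varepsilon_0$. Theorem \ref{th4.6} then delivers existence, uniqueness and global asymptotic stability of the positive equilibrium of \eqref{4a1}.

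The one step requiring care is the positive definiteness of $2I_k - A$ at the base point: symmetric diagonal dominance with positive diagonal yields only positive semi-definiteness in general, so I must combine it with the invertibility of $A$ forced by (ii) (and, if necessary, strictness in the diagonal dominance inequality) to rule out $2$ as an eigenvalue of $A$. Once this is secured, everything else is a routine application of continuity and the already-proved Theorem \ref{th4.6}.
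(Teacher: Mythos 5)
Your proposal is correct and takes essentially the same route as the paper: the paper also verifies $\mathbf{(F_1)}$ and $\mathbf{(F_3)}$ with $Q_3=I_k$ by writing $\bar{\mathbf{c}}=A^{-1}\mathbf{v}+\varepsilon(I_k-B)^{-1}\mathbf{v}^T$, $\underline{\mathbf{c}}=A^{-1}\mathbf{v}-\varepsilon(I_k-B)^{-1}\mathbf{v}^T$, so that $\mathbf{c}_1\to 0$ as $\varepsilon\to 0^+$, and then invokes Theorem \ref{th4.6}; your anchoring at $\varepsilon=0$ plus openness of positivity and positive definiteness is the same perturbation argument in different words. The one delicate point you flag, positive definiteness of $2I_k-A=I_k-B$, is exactly the step the paper asserts directly from condition (i) (strictness of the diagonal dominance, as in Example \ref{ex2}, is what is really being used, since invertibility of $A$ alone would not exclude $2$ from the spectrum of $A$), so your caveat is warranted but does not change the argument.
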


\begin{proof} We first show that $0<\underline c_i\leq \bar c_i$ for $1\leq i\leq k$,  where $\underline c_i$ and $\bar c_i$ are defined as in $\mathbf{(F_1)}$. Recalling that the  diagonal entries of $B$ are 0. By the condition (i), both $A=I_k+B$ and $I_k-B$ are positive definite and so are non-degenerate. Denote $\bar{\mathbf{c}}=(\bar c_1,...,\bar c_k)$, $\underline{\mathbf{c}}=(\underline c_1,...,\underline c_k)$ and $\mathbf{c}_*=(\bar{\mathbf{c}},\underline{\mathbf{c}})$. Then   from \eqref{4.12}, we obtain
\bess
\begin{bmatrix}
A & 0\\
0 &  A
\end{bmatrix}\mathbf{c}_*^T&=&
\begin{bmatrix}
\mathbf{m}^-+(I_k-B)^{-1}(\mathbf{m}^+-\mathbf{m}^-)\\[.5mm]
\mathbf{m}^+-(I_k-B)^{-1}(\mathbf{m}^+-\mathbf{m}^-)
\end{bmatrix}\\
&=&\begin{bmatrix}
\frac 1 2(\mathbf{m}^++\mathbf{m}^-)-\frac 1 2(\mathbf{m}^+-\mathbf{m}^-)+(I_k-B)^{-1}(\mathbf{m}^+-\mathbf{m}^-)\\[.5mm]
\frac 1 2(\mathbf{m}^++\mathbf{m}^-)+\frac 1 2(\mathbf{m}^+-\mathbf{m}^-)-(I_k-B)^{-1}(\mathbf{m}^+-\mathbf{m}^-)
\end{bmatrix}\\[1mm]
 &=&\frac{1}{2}\begin{bmatrix}
\mathbf{m}^++\mathbf{m}^-+A(I_k-B)^{-1}(\mathbf{m}^+-\mathbf{m}^-)\\[.5mm]
\mathbf{m}^++\mathbf{m}^--A(I_k-B)^{-1}(\mathbf{m}^+-\mathbf{m}^-)
\end{bmatrix},
\eess
where $A=I_k+B$, and so
\bes\label{4.17aa}\lf\{\begin{array}{ll}
	\bar{\mathbf{c}}=\frac{1}{2}A^{-1}(\mathbf{m}^-+\mathbf{m}^+)
	+\frac{1}{2}(I_k-B)^{-1}(\mathbf{m}^+-\mathbf{m}^-),\\[1mm]
		\underline{\mathbf{c}}=\frac{1}{2}A^{-1}(\mathbf{m}^-+\mathbf{m}^+)
	-\frac{1}{2}(I_k-B)^{-1}(\mathbf{m}^+-\mathbf{m}^-),\\[1mm]
\bar{\mathbf{c}}-\underline{\mathbf{c}}=(I_k-B)^{-1}(\mathbf{m}^+-\mathbf{m}^-).
\end{array}\rr.
\ees
Since $m_i(x)=1+\varepsilon f_i(x)$ with $-1 \leq f_i(x)\leq 1$ on $\ol\Omega$, we have $\mathbf{m}^+=(1+\varepsilon)\mathbf{v}^T$, $\mathbf{m}^-=(1-\varepsilon)\mathbf{v}^T$,  where $\mathbf{v}=(1,1,...,1)$. Then by \eqref{4.17aa},
\bes\label{4.18aa}\lf\{\begin{array}{ll}
 \bar{\mathbf{c}}=A^{-1}\mathbf{v}+\varepsilon(I_k-B)^{-1}\mathbf{v}^T,\\[1mm]  \underline{\mathbf{c}}=A^{-1}\mathbf{v}-\varepsilon(I_k-B)^{-1}\mathbf{v}^T,\\[1mm]
 \bar{\mathbf{c}}-\underline{\mathbf{c}}=2\varepsilon(I_k-B)^{-1}\mathbf{v}^T.
 \end{array}\rr.
 \ees
Thanks to the condition (ii), the first two equalities of \eqref{4.18aa} and $0<\varepsilon\ll 1$, it follows that $\bar c_i,\underline c_i>0$ for $1\leq i\leq k$. Moveover we claim that
\bes\label{4.19aa}
\underline c_i\leq \bar c_i,\ \ \ \forall\,1\leq i\leq k.
\ees
Notice that $I_k-B$ is positive definite.  Taking advantages of  Lemma \ref{lemma4.4} and \eqref{4.9a} replacing $Q_4$ by $I_k$ in there, the inequality \eqref{4.19aa} is derived.

Using \eqref{4.18aa} and \eqref{4.19aa}, we see that ${\bar c_i-\underline c_i}>0$ and ${\bar c_i}$ for $1\leq i\leq k$ are linear increasing with respect to $\varepsilon$. Meanwhile,
it can be verified that $\dd\frac{\bar c_i-\underline c_i}{\bar c_i}$ for $1\leq i\leq k$ are linear increasing with respect to $0<\varepsilon\ll 1$. Recalling that the matrix $I_k-B$ is positive definite,  by \eqref{4.8} we get the positive definiteness of the matrix $I_k-B-\mathbf{c}_1$ for $0<\varepsilon<\varepsilon_0$ provided $\varepsilon_0>0$ is small,  where $ \mathbf{c}_1$ is defined in \eqref{c1}. The desired conclusion is followed by Theorem \ref{th4.6}. The proof is finished.
\end{proof}

The global stability of the positive coexistence state in Corollary \ref{coro4.8} is achieved under a weak competition condition on the competition matrix $A$ ($A$ is diagonally dominant) and the resource function being a small perturbation from homogeneous one. We end this subsection by giving another two examples of competition with $2$ and $4$ species.

\begin{example} Let $k=2$ and
\begin{equation*}
A=\begin{bmatrix}
1 & a_{12}\\
a_{21} & 1
\end{bmatrix},\ \ \
B=\begin{bmatrix}
0 & a_{12}\\
a_{21} & 0
\end{bmatrix},\ \ \
\mathbf{m}^-=\begin{pmatrix}
m_1^- \\
m_2^-
\end{pmatrix},\ \ \
\mathbf{m}^+=\begin{pmatrix}
m_1^+ \\
m_2^+
\end{pmatrix}.
\end{equation*}
Then the conclusions in Theorem \ref{th4.6} hold  if
\begin{align}
&a_{12}<\frac{m_1^-}{m_2^+}\leq \frac{m_1^+}{m_2^-}<\frac{1}{a_{21}},\label{4.20b}\\
&a_{12}a_{21}<\lf( 1-\frac{m_1^+-m_1^-+a_{12}(m_2^+-m_2^-)}{m_1^+-a_{12}m_2^-}\rr) \lf( 1-\frac{a_{21}(m_1^+-m_1^-)+m_2^+-m_2^-}{m_2^+-a_{21}m_1^-}\rr).\label{4.21b}
\end{align}
We verify $\mathbf{(F_{1})}$ and $\mathbf{(F_{3})}$ under the conditions \eqref{4.20b} and \eqref{4.21b}. A simple calculation gives
\begin{equation*}
A^{-1}=\frac{1}{1-a_{12}a_{12}}\begin{bmatrix}
1 & -a_{12}\\
-a_{21} & 1
\end{bmatrix},\ \ \ \ \ \
(I_2-B)^{-1}=\frac{1}{1-a_{12}a_{21}}\begin{bmatrix}
1 & a_{12}\\
a_{21} & 1
\end{bmatrix}.
\end{equation*}
Then from \eqref{4.17aa}, we see
\begin{align*}
&\bar{\mathbf{c}}=\frac{1}{1-a_{21}a_{12}}(m_1^+-a_{12}m_2^-,m_2^+-a_{21}m_1^-)^T,\\
&\underline{\mathbf{c}}=\frac{1}{1-a_{21}a_{12}}(m_1^--a_{12}m_2^+,m_2^--a_{21}m_1^+)^T,\\
&\bar{\mathbf{c}}-\underline{\mathbf{c}}=\frac{1}{1-a_{21}a_{12}}(m_1^+-m_1^-+a_{12}(m_2^+-m_2^-),a_{21}(m_1^+-m_1^-)+m_2^+-m_2^-)^T.
\end{align*}
By \eqref{4.20b}, any element in the vectors $\bar{\mathbf{c}}$, $\underline{\mathbf{c}}$  is positive,  and each element in $\bar{\mathbf{c}}-\underline{\mathbf{c}}$  is nonnegative, which implies that  $\mathbf{(F_{1})}$ holds.  Using the above formulas, we deduce
\begin{equation*}
I_k-B-\mathbf{c}_1=
\begin{bmatrix}
1-\dd\frac{m_1^+-m_1^-+a_{12}(m_2^+-m_2^-)}{m_1^+-a_{12}m_2^-}&-a_{12}\\
-a_{21}&1-\dd\frac{a_{21}(m_1^+-m_1^-)+m_2^+-m_2^-}{m_2^+-a_{21}m_1^-}
\end{bmatrix},
\end{equation*}
where $\mathbf{c}_1$ is defined in \eqref{c1}.  It is well known that  $\mathbf{(F_{3})}$ holds if and only if \eqref{4.21b} is satisfied.

If both $m_1$ and $m_2$ are positive constants, then the two conditions \eqref{4.20b} and \eqref{4.21b} become $a_{12}<\dd\frac{m_1}{m_2}<\dd\frac{1}{a_{21}}$ which  coincides with the weak competition condition in the two species diffusive competitive problem in an homogeneous environment \cite{bp1983,gbs1977}. On the other hand, for the nonhomogeneous environment case, the result here is not as optimal as the ones in \cite{hn2016}, but our proof is completely different: we use Lyapunov functional method, and we do not use the monotone dynamical system method.
\end{example}

\begin{example}\label{ex2} Suppose that $m_i(x)$ for $1\leq i\leq k$ satisfy the condition in Corollary \ref{coro4.8}. If $k=4$ and
\bess
A=\begin{bmatrix}
 1  \ \  \ \ & 0.2 \ \ \ \ & 0.1 \ \ \ \ & 0.1\\
 0.2   & 1     & 0.2   &  0.15\\
 0.1   & 0.2   & 1     &  0.1\\
 0.1   & 0.15  & 0.1   &  1
\end{bmatrix},
\eess
then for $0<\varepsilon\leq 0.1$, the results in Corollary \ref{coro4.8} hold true.
\end{example}

\subsection{Global stability of semi-trivial equilibrium solutions}\label{s4.2}

Without loss of generality, we investigate the global stability of the semi-trivial equilibrium solution with the form $\mathbf{u}_{i_0}^{*}:=(u_1^*,...,u_{i_0}^{*},0,...,0)$,
where $1\leq i_0\leq k-1$, and ${\mathbf{u}}_{i_0}^{**}:=(u_1^*,...,u_{i_0}^*)$
is the positive solution of the following elliptic problem
\begin{equation}\label{4.14a}
\begin{cases}
\dd d_i\Delta u_i+u_i\bigg(m_i(x)-\sum_{ j=1}^{i_0} a_{ij}u_j\bigg)=0, & x\in \Omega,\; 1\leq i\leq i_0,\\[1.5mm]
\dd \partial_\nu u_i=0,& x\in \partial\Omega,\; 1\leq i\leq i_0.
 \end{cases}
\end{equation}

For the simplicity of notations, similar to \eqref{4.3}, we denote
\bes\label{4.14b}
A_{i_0}=(a_{ij})_{i_0\times i_0},\  B_{i_0}=A_{i_0}-I_{i_0},\  \widetilde{\mathbf{m}}^-=(m^-_1,..., m^-_{i_0})^T,\
\widetilde{\mathbf{m}}^+=( m^+_1,..., m^+_{i_0})^T,
\ees
where $I_{i_0}$ is the $i_0\times i_0$ identity matrix. The diagonal entries of $B_{i_0}$ are $0$.

To study the global stability of the semi-trivial equilibrium solution $\mathbf{u}_{i_0}^{*}$ of the problem \eqref{4.1}, we make the following assumptions:
\begin{enumerate}[leftmargin=4em]
\item[$\mathbf{(G_1)}$]
The determinant $\det[A_{i_0}(2I_{i_0}-A_{i_0})]\neq 0$, and the algebraic equations
\bes
\begin{bmatrix}
A_{i_0} & 0\\
0 &  A_{i_0}
\end{bmatrix}(\mathbf{c}_{i_0}^*)^T=\begin{bmatrix}
\wtd{\mathbf{m}}^-+(I_{i_0}-B_{i_0})^{-1}(\wtd{\mathbf{m}}^+-\wtd{\mathbf{m}}^-)\\[.5mm]
\wtd{\mathbf{m}}^+-(I_{i_0}-B_{i_0})^{-1}(\wtd{\mathbf{m}}^+-\wtd{\mathbf{m}}^-)
\end{bmatrix}\label{xx}
 \ees
has a unique positive solution $\mathbf{c}_{i_0}^*:=(\bar c_1,...,\bar c_{i_0}, \underline c_1,...,\underline c_{i_0})$ and
  \bes
  m^+_i-\sum_{ j=1}^{i_0} a_{ij}\underline c_i<0,\ \ \dd m^-_i-\sum_{ j=1}^{i_0} a_{ij}\bar c_i<0, \ \ \forall\;i_0+1\leq i\leq k,
  \label{yy}\ees
where $I_{i_0}$, $B_{i_0}$, $\widetilde{\mathbf{m}}^-$ and
$\widetilde{\mathbf{m}}^+$ are given by \eqref{4.14b}.
 \item[$\mathbf{(G_2)}$] There exists an $i_0\times i_0$ diagonal matrix $Q_6$ with positive constant entries such that
$Q_6(I_{i_0}-B_{i_0}-\mathbf{c}_2)+(I_{i_0}-B_{i_0}-\mathbf{c}_2)^T Q_6$ is positive definite, where
    \bes\label{c2}
    \mathbf{c}_2=\text{diag}\bigg(\frac{\bar c_1-\underline c_1}{\bar c_1},\frac{\bar c_2-\underline c_2}{\bar c_2},...,\frac{\bar c_{i_0}-\underline c_{i_0}}{\bar c_{i_0}}\bigg)
    \ees
and $\bar c_i$, $\underline c_i$ for $1\leq i\leq i_0$ are given by $\mathbf{(G_1)}$.
\end{enumerate}

\begin{lemma}\label{lemma4.7}
If $\mathbf{(G_1)}$ and one of  $\mathbf{(F_{2})}$ or \eqref{4.9a} hold.
\begin{enumerate}

\item[{\rm(i)}] The problem \eqref{4a2} has a semi-trivial equilibrium $\mathbf{c}_{i_0}^{**}=(\bar c_1,...,\bar c_{k}, \underline c_1,...,\underline c_{k})$ with $\bar c_i,\underline c_i>0$ for $1\leq i\leq i_0$ and  $\bar c_i=\underline c_i=0$ for $i_0+1\leq i\leq k$. The equilibrium $\mathbf{c}_{i_0}^{**}$ is globally asymptotically stable with respect to \eqref{4a2}. Moreover, the solution $(u_1,...,u_k)$ of \eqref{4.1} satisfies
\bes\label{4.15}
\lf\{\begin{array}{ll}
\underline c_i\leq\dd\liminf_{t\to\yy}u_i(x,t)\leq \limsup_{t\to\yy}u_i(x,t)\leq\bar c_i,\ \ &\forall\;x\in\ol\Omega,\;t>0,\; 1\leq i\leq i_0,\\[2mm]
\dd\lim_{t\to\yy}u_i(x,t)=0\ {\rm uniformly\ on}\ \ol\Omega, \ \ &\forall\; i_0+1 \leq i\leq k.
\end{array}\rr.
\ees

\item[{\rm(ii)}] For any $i_0+1\leq i\leq k$, we have
  \bes\label{4.16}
  \int_{0}^\yy\int_{\Omega}u^2_i(x,t){\rd}x{\rd}t< \yy.
  \ees

 \item[{\rm (iii)}]    The problem \eqref{4.14a} has a positive solution ${\mathbf{u}}_{i_0}^{**}=(u_1^*,u_2^*,...,u_{i_0}^*)$ which satisfies
 \begin{equation}\label{4.17}
 \begin{split}
 \underline c_i \leq u_i^*(x)\leq\bar c_i,\ \ \forall\ x\in\ol\Omega,\; 1\leq i\leq i_0.
 \end{split}
 \end{equation}
\end{enumerate}
\end{lemma}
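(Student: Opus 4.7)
The plan is to mirror the proof of Proposition~\ref{lemma4.4}, with modifications needed because $\mathbf{c}_{i_0}^{**}$ is semi-trivial rather than fully positive. For \textbf{part~(i)}, I first extract from the matrix equation \eqref{xx} of $\mathbf{(G_1)}$ the positive $2i_0$-tuple $(\bar c_1,\ldots,\bar c_{i_0},\underline c_1,\ldots,\underline c_{i_0})$ and pad with zeros on the indices $i_0+1,\ldots,k$ to obtain the semi-trivial equilibrium $\mathbf{c}_{i_0}^{**}$ of \eqref{4a2}. Viewing \eqref{4a2} as a $2k$-dimensional Lotka--Volterra system with coefficient matrix $\tilde A = {\small\Big[\begin{array}{ll} I_k & B \\[-.2mm] B & I_k \end{array}\Big]}$, the argument of Proposition~\ref{lemma4.4} (via $\mathbf{(F_{2})}$ directly, or via Corollary~\ref{coro4.3} for \eqref{4.9a}) supplies a positive diagonal matrix $Q=\text{diag}(q_1,\ldots,q_{2k})$ making $Q\tilde A+\tilde A^T Q$ positive definite. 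Global asymptotic stability of $\mathbf{c}_{i_0}^{**}$ would then be established via the Goh-type Lyapunov function
\[V(\mathbf{w}) = \sum_{c_i^{**}>0} q_i \lf(w_i - c_i^{**} - c_i^{**}\ln(w_i/c_i^{**})\rr) + \sum_{c_i^{**}=0} q_i w_i,\]
whose derivative along orbits, after substituting the equilibrium relations $r_i=\sum_j \tilde a_{ij}c_j^{**}$ for indices with $c_i^{**}>0$, splits as
\[V'(\mathbf{w}) = -\tfrac{1}{2}(\mathbf{w}-\mathbf{c}_{i_0}^{**})[Q\tilde A+\tilde A^T Q](\mathbf{w}-\mathbf{c}_{i_0}^{**})^T + \sum_{c_i^{**}=0} q_i w_i\Big(r_i - \sum_{c_j^{**}>0}\tilde a_{ij}c_j^{**}\Big).\]
Unwrapping the block structure of $\tilde A$, the linear coefficients become exactly $m_i^+-\sum_{j=1}^{i_0}a_{ij}\underline c_j$ and $m_i^--\sum_{j=1}^{i_0}a_{ij}\bar c_j$ for $i_0+1\leq i\leq k$, both strictly negative by \eqref{yy}. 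Hence $V'<0$ off $\mathbf{c}_{i_0}^{**}$, and combining with the comparison \eqref{4.6} yields \eqref{4.15}.

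For \textbf{part~(ii)}, the convergence in part~(i) together with the strict negativity in \eqref{yy} give $\eta>0$ and $T>0$ such that $m_i^+-\sum_{j=1}^{i_0}a_{ij}\underline u_j(t)\leq -\eta$ for all $t\geq T$ and $i_0+1\leq i\leq k$. Then $\bar u_i'(t)\leq -\eta\bar u_i(t)$, so $\bar u_i(t)\leq Ce^{-\eta t}$, and by \eqref{4.6} this exponential decay passes to $u_i(x,t)$, from which \eqref{4.16} is immediate. For \textbf{part~(iii)}, I verify that $(\bar c_1,\ldots,\bar c_{i_0})$ and $(\underline c_1,\ldots,\underline c_{i_0})$ form a pair of coupled ordered upper and lower solutions of the competitive (quasimonotone non-increasing) system \eqref{4.14a}: being constants that satisfy the Neumann condition trivially, the required differential inequalities reduce to $m_i(x)\leq \bar c_i+\sum_{j\neq i,\,j\leq i_0}a_{ij}\underline c_j=m_i^+$ and $m_i(x)\geq \underline c_i+\sum_{j\neq i,\,j\leq i_0}a_{ij}\bar c_j=m_i^-$, both immediate from the definitions of $m_i^\pm$ once one undoes the multiplication by $(I_{i_0}-B_{i_0})^{-1}$ in \eqref{xx}. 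Pao's theorem \cite[Theorem~10.2]{pcv1992} then produces a positive solution $\mathbf{u}_{i_0}^{**}$ of \eqref{4.14a} satisfying \eqref{4.17}.

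The main obstacle is the identification in part~(i): one must check that expanding the Goh Lyapunov derivative in the original $(\bar u_i,\underline u_i)$-coordinates produces exactly the left-hand sides of \eqref{yy} as the coefficients of the extinct species, so that the full $Q\tilde A+\tilde A^T Q$ positive definiteness from the $k\times k$-block hypothesis already suffices and no further structural assumption on the $(i_0+1,\ldots,k)$-block of $B$ is needed. Once this bookkeeping is pinned down, the Lyapunov decrease follows cleanly and no circular reasoning arises, since convergence of the persistent variables and decay of the extinct ones are controlled simultaneously by the single functional $V$.
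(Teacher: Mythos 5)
Your proposal is correct. For parts (i) and (iii) it is essentially the paper's own argument: the same Goh-type functional with logarithmic entries for $1\le i\le i_0$ and linear entries for $i_0+1\le i\le k$, the same splitting of its derivative into a quadratic form controlled by the positive definiteness supplied by $\mathbf{(F_{2})}$ or \eqref{4.9a} (via Corollary \ref{coro4.3}) plus linear terms whose coefficients are precisely the quantities in \eqref{yy} --- the bookkeeping you were worried about works out exactly as in the paper's computation of $dF/dt$, and no structural assumption on the rows $i_0+1,\dots,k$ of $B$ is needed beyond \eqref{yy} --- and the same constant upper--lower solution pair fed into Pao's theorem for \eqref{4.14a}; the only small point to record there is the ordering $\underline c_i\le\bar c_i$ for $1\le i\le i_0$, which follows from the comparison structure of \eqref{4a2} as in Proposition \ref{lemma4.4} and is what makes the pair ordered. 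Part (ii) is where you take a genuinely different route: the paper integrates $\frac{d}{dt}\int_\Omega u_i\,\rd x=\int_\Omega u_i\big(m_i-\sum_j a_{ij}u_j\big)\rd x\le-\int_\Omega u_i^2\,\rd x$ for large $t$ (using \eqref{4.15}, \eqref{yy} and $a_{ii}=1$) and reads \eqref{4.16} off the resulting integral inequality, whereas you use the convergence $\underline u_j(t)\to\underline c_j$ for $1\le j\le i_0$ from part (i) to force $\bar u_i'\le-\eta\bar u_i$ for the extinct indices, hence exponential decay of $\bar u_i(t)$ and, through \eqref{4.6}, of $u_i(x,t)$ itself. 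Both arguments are sound and of comparable length; yours yields the stronger conclusion of uniform exponential decay (which trivially implies \eqref{4.16}), while the paper's version needs only the asymptotic bounds in \eqref{4.15} and no quantitative information on how fast $\underline u_j$ approaches its limit.
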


\begin{proof} (i) As $\mathbf{(G_1)}$ holds, similar to the proof of Proposition \ref{lemma4.4} we know that the unique positive solution ${\mathbf{c}}_{i_0}^*$ of \eqref{xx} is the unique positive equilibrium of \eqref{4a2} with $k=i_0$.
Again by similar discussion as in the proof of Proposition \ref{lemma4.4}, there exists  $Q=\text{diag}({\var_1,...,\var_k,\eta_1,...,\eta_k})$ with $\var_i,\eta_i>0$ such that
 \bes\label{4.24aa}
Q{\small\Big[\begin{array}{ll}
I_k & B\\
B & I_k
\end{array}\Big]}+{\small\Big[\begin{array}{ll}
I_k & B\\
B & I_k
\end{array}\Big]}^TQ\ \ {\rm is\ positive\ definite},
\ees
where $B$ is given by \eqref{4.3}.

Let $(\bar u_1,...,\bar u_k, \underline u_1,...,\underline u_k)$ be
the solution of \eqref{4a2}. Define a function $F(t)$ by
\bess
F(t)=\dd\sum_{ i=1}^{i_0} \lf(\var_i\int_{\bar c_i}^{\bar u_i(t)}\frac{s-\bar c_i}{s}\rd s+\eta_i\int_{\underline c_i}^{\underline u_i(t)}\frac{s-\underline c_i}{s} \rd s\rr)+\sum_{ i=i_0+1}^{k}[\var_i \bar u_i(t)+\eta_i \underline u_i(t)].
\eess
Notice $\bar c_i=\underline c_i=0$ for $i_0+1\leq i\leq k$ and $a_{ii}=1$, $1\leq i\leq k$. From \eqref{4a2}, a direct computation yields
 \bess
\frac{dF}{dt}&=&\sum_{ i=1}^{i_0} \lf(\var_i \frac{\bar u_i-\bar c_i}{\bar u_i}\bar u_i'(t)+\eta_i\frac{\underline u_i-\underline c_i}{\underline u_i} \underline u_i'(t)\rr)+\sum_{ i=i_0+1}^{k}[\var_i \bar u_i'(t)+\eta_i\underline u_i'(t)]\\
&=&-\dd\sum_{1\leq i\leq i_0}
[\var_i(\bar u_i-\bar c_i)^2+\eta_i(\underline u_i-\underline c_i)^2]\\[.1mm]
&&+\sum_{1\leq i\leq i_0}\sum_{j\neq i}\big[\var_i a_{ij}(\bar u_i-\bar c_i)(\underline u_j-\underline c_j)+\eta_ia_{ij}(\underline u_i-\underline c_i)(\bar u_j-\bar c_j)\big]\\
&&+\dd\sum_{i_0+1\leq i\leq k}\var_i \bar u_i\bigg(m^+_i-\sum_{1\leq j\leq i_0}a_{ij}\underline c_j-\dd\sum_{j\neq i_0}a_{ij}(\underline u_i-\underline c_i)-\bar u_i\bigg)\\[.1mm]
&&+\dd\sum_{i_0+1\leq i\leq k}\eta_i \underline u_i\bigg(m^-_i-\sum_{1\leq j\leq i_0}a_{ij}\bar c_j-\dd\sum_{j\neq i_0}a_{ij}(\bar u_i-\bar c_i)-\underline u_i\bigg)\\[.1mm]
&=&-\dd\sum_{j\neq i}\big[\var_ia_{ij}(\bar u_i-\bar c_i)(\underline u_j-\underline c_j)+\eta_ia_{ij}(\underline u_i-\underline c_i)(\bar u_j-\bar c_j)\big]\\
&&-\dd\sum_{1\leq i,j\leq k}\big[\var_i(\bar u_i-\bar c_i)^2+\eta_i(\underline u_i-\underline c_i)^2\big]+\sum_{i_0+1\leq i\leq k} \var_i\bar u_i\bigg( m^+_i-\sum_{1\leq j\leq i_0}a_{ij}\underline c_j\bigg)\\[.1mm]
&&+\dd\sum_{i_0+1\leq i\leq k}\eta_i \underline u_i\bigg( m^-_i-\sum_{1\leq j\leq i_0}a_{ij}\bar c_j\bigg).
 \eess
From \eqref{yy} and \eqref{4.24aa},  it follows that
\begin{equation*}
\begin{split}
\frac{dF}{dt}\leq &-\dd\sum_{j\neq i}\big[\var_i a_{ij}(\bar u_i-\bar c_i)(\underline u_j-\underline c_j)+\eta_ia_{ij}(\underline u_i-\underline c_i)(\bar u_j-\bar c_j)\big]\\
&-\dd\sum_{1\leq i,j\leq k}\big[\var_i(\bar u_i-\bar c_i)^2+\eta_i(\underline u_i-\underline c_i)^2\big]\\[1mm]
=&-\frac{1}{2}(\mathbf{U}-\mathbf{c}_{i_0}^{**})\bigg(Q\begin{bmatrix}
I_k & B\\
B & I_k
\end{bmatrix}+\begin{bmatrix}
I_k & B\\
B & I_k
\end{bmatrix}^TQ\bigg)(\mathbf{U}-\mathbf{c}_{i_0}^{**})^{T}\le 0,
\end{split}
\end{equation*}
and
\bess
 \frac{dF}{dt}<0 \ \ \ {\rm if} \, \ \mathbf{U}\not=\mathbf{c}_{i_0}^{**},
 \eess
  where $\mathbf{U}=(\ol u_1,...,\ol u_k,\underline u_1,...,\underline u_k)$ and $Q=\text{diag}({\var_1,...,\var_k,\eta_1,...,\eta_k})$.
By the Lyapunov-LaSalle invariance principle, $\mathbf{c}_{i_0}^{**}$ is globally asymptotically stable with respect to the problem \eqref{4a2}. This combined with  \eqref{4.6} allows us to derive \eqref{4.15}.

(ii) It follows from  \eqref{4.1},  \eqref{yy} and \eqref{4.15} that there exist two constants $T>0$ and $0<\varepsilon\ll 1$ such that  for $i_0+1\leq i\leq k$ and $t\geq T$,
\begin{equation*}
\begin{split}
\dd\int_{T}^\yy\int_{\Omega} \frac{\partial u_i(x,t)}{\partial t}{\rd}x{\rd}t=&\int_{T}^\yy\int_{\Omega} u_i\bigg(m_i(x)-\sum_{1\leq j\leq k} a_{ij}u_j\bigg){\rd}x{\rd}t\\[.1mm]
  \leq&\int_{T}^\yy\int_{\Omega} u_i\bigg( m^+_i+\varepsilon-\sum_{1\leq j\leq i_0} a_{ij}\underline c_j-\sum_{i_0+1\leq j\leq k} a_{ij}u_j\bigg) {\rd}x{\rd}t\\[.1mm]
  \leq& -\int_{T}^\yy\int_{\Omega}u_i^2 {\rd}x{\rd}t \ \ {\rm (as} \  a_{ii}=1{\rm)},
\end{split}
\end{equation*}
  which implies that
  \bess
  \int_{T}^\yy\int_{\Omega}u_i^2 {\rd}x{\rd}t\leq\int_{\Omega} u_i(x,T) {\rd}x<\yy, \ \ \forall\;i_0+1\leq i\leq k.
  \eess
  Therefore \eqref{4.16} holds.

  (iii) From (i), $(\bar c_1,...,\bar c_{i_0}, \underline c_1,...,\underline c_{i_0})$ with $0<\underline c_i\leq \bar c_i$ for $1\leq i\leq i_0$ is the unique positive equilibrium of the problem \eqref{4a2} with $k=i_0$. Hence, $(\bar c_1,...,\bar c_{i_0})$ and $(\underline c_1,...,\underline c_{i_0})$ are a pair of ordered upper and lower solutions of the problem \eqref{4.14a}. By \cite[Theorem 10.2, {Page 440}]{pcv1992}, the problem \eqref{4.14a} has a positive solution $(u_1^*,...,u_{i_0}^*)$ and \eqref{4.17} holds.
\end{proof}

Now we prove the global stability of the semitrivial equilibrium.

\begin{theorem}\label{th4.11}
Suppose that the assumptions $\mathbf{(G_1)}$, $\mathbf{(G_2)}$ and  one of $\mathbf{(F_{2})}$ or \eqref{4.9a} holds. Let $(u_1^*,...,u_{i_0}^*)$ be the positive solution of \eqref{4.14a} in Lemma {\rm\ref{lemma4.7}}. Then  any solution of \eqref{4.1} satisfies
\begin{equation}\label{4.26aa}
\begin{cases}
\dd\lim_{t\to\yy}u_i(x,t)=u_i^*(x) \ \ \ &{\rm in} \ \ C^2(\ol\Omega),\ \ 1\leq i\leq  i_0,\\
\dd\lim_{t\to\yy}u_i(x,t)=0 \ \ \ &{\rm in} \ \ C^2(\ol\Omega),\ \ i_0+1\leq i\leq k.
\end{cases}
\end{equation}
\end{theorem}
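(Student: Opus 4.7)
The plan is to build a weighted Lyapunov functional on the surviving $i_0$-species block, to control the extinction block $\{j:i_0+1\le j\le k\}$ by the space-time $L^2$ integrability \eqref{4.16} supplied by Lemma \ref{lemma4.7}(ii), and to close the argument using Lemma \ref{th2.2} together with the parabolic regularity Theorem \ref{th2.1}. Lemma \ref{lemma4.7}(iii) already supplies a positive solution $(u_1^*,\ldots,u_{i_0}^*)$ of \eqref{4.14a} with $\underline c_i\le u_i^*(x)\le\bar c_i$, which serves as the candidate limit; its uniqueness is forced a posteriori by the Lyapunov convergence, since any second positive solution of \eqref{4.14a}, taken as initial data for \eqref{4.1} with $u_j\equiv 0$ for $j>i_0$, would be stationary and simultaneously attracted to $(u_1^*,\ldots,u_{i_0}^*)$.

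Imitating the functional used in the proof of Theorem \ref{th4.6}, I would set
\bess
F(t)=\sum_{i=1}^{i_0}\varepsilon_i\int_{\Om}\int_{u_i^*(x)}^{u_i(x,t)}u_i^*(x)\,\frac{s-u_i^*(x)}{s}\,{\rd}s\,{\rd}x\ge 0,
\eess
with positive weights $\varepsilon_i$ chosen so that $Q:=\mathrm{diag}(\varepsilon_1\bar c_1,\ldots,\varepsilon_{i_0}\bar c_{i_0})$ equals the matrix $Q_6$ furnished by $\mathbf{(G_2)}$. Differentiating $F$, substituting $m_i(x)$ using the elliptic equation for $u_i^*$, and applying Lemma \ref{lemma2.3}(i) (with $\beta=1$, $c\equiv 1$, $g\equiv 0$) to dispose of the diffusion contribution exactly as in \eqref{2.8b1} and the computation of Theorem \ref{th4.6}, leads to
\bess
\frac{dF}{dt}\le -\sum_{1\le i,j\le i_0}\varepsilon_ia_{ij}\int_{\Om}u_i^*(u_i-u_i^*)(u_j-u_j^*)\,{\rd}x-\sum_{i=1}^{i_0}\sum_{j=i_0+1}^{k}\varepsilon_ia_{ij}\int_{\Om}u_i^*(u_i-u_i^*)u_j\,{\rd}x.
\eess

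For the first (\emph{self}) sum, the symmetrization argument used in the proof of Theorem \ref{th4.6} combined with $\underline c_i\le u_i^*\le\bar c_i$ yields the upper bound $-\tfrac12\mathbf{U}_1[Q(I_{i_0}-B_{i_0}-\mathbf{c}_2)+(I_{i_0}-B_{i_0}-\mathbf{c}_2)^TQ]\mathbf{U}_1^T$ with $\mathbf{U}_1=(|u_1-u_1^*|,\ldots,|u_{i_0}-u_{i_0}^*|)$, which by $\mathbf{(G_2)}$ and \eqref{4.9} is bounded above by $-\delta\sum_{i=1}^{i_0}(u_i-u_i^*)^2$ for some $\delta>0$. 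For the second (\emph{cross}) sum, I would use $u_i^*\le\bar c_i$ together with Young's inequality $|u_i^*(u_i-u_i^*)u_j|\le\tfrac{\eta}{2}(u_i-u_i^*)^2+\tfrac{\bar c_i^{\,2}}{2\eta}u_j^2$, with $\eta>0$ chosen small enough that the quadratic-in-$(u_i-u_i^*)$ part is absorbed into one half of the self-contribution. Assembling these estimates gives
\bess
\frac{dF}{dt}\le -\frac{\delta}{2}\psi(t)+Ch(t),\ \ \psi(t):=\int_{\Om}\sum_{i=1}^{i_0}(u_i-u_i^*)^2\,{\rd}x,\ \ h(t):=\int_{\Om}\sum_{j=i_0+1}^{k}u_j^2\,{\rd}x.
\eess

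By Lemma \ref{lemma4.7}(ii) one has $\int_0^{\yy}h(t)\,{\rd}t<\yy$, while Theorem \ref{th2.1} gives uniform $C^{2+\alpha}(\ol\Om)$ bounds on $u_i(\cdot,t)$ for $t\ge 1$, hence a uniform bound on $|\psi'(t)|$; Lemma \ref{th2.2} then forces $\psi(t)\to 0$, i.e., $L^2$-convergence of $u_i$ to $u_i^*$ for $1\le i\le i_0$. Together with the precompactness of $\{u_i(\cdot,t):t\ge 1\}$ in $C^2(\ol\Om)$ and uniqueness of the $L^2$-limit, this upgrades to the $C^2(\ol\Om)$-convergence claimed in \eqref{4.26aa}, exactly as in the closing step of Theorem \ref{coro2.4}; for $i_0+1\le i\le k$, Lemma \ref{lemma4.7}(i) already supplies uniform convergence to $0$ on $\ol\Om$, which the $C^{2+\alpha}$ bounds promote to convergence in $C^2(\ol\Om)$. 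The main technical hurdle I expect is this balance in the cross-term estimate: the Young constant $\eta$ must be small enough that the cross contribution does not swamp the negative self-contribution guaranteed by $\mathbf{(G_2)}$, yet the remaining forcing $h(t)$ must still be integrable in time -- a balance made possible precisely by the extinction-species bound \eqref{4.16}.
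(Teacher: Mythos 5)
Your proposal is correct and follows essentially the same route as the paper's own proof: the same weighted Lyapunov functional on the surviving block with weights matched to $Q_6$ from $\mathbf{(G_2)}$, the same Young-inequality absorption of the cross terms with the extinction species into the negative definite self-contribution, and the same closing step via \eqref{4.16}, Theorem \ref{th2.1} and Lemma \ref{th2.2} (with the extinction species handled by \eqref{4.15} plus parabolic regularity). The only cosmetic difference is the order in which you symmetrize the self sum and apply Young's inequality, and your added a posteriori uniqueness remark, neither of which changes the argument.
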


\begin{proof} Let $(u_1,...,u_k)$ be the solution of \eqref{4.1}. We have known $\dd\lim_{t\to\yy}u_i(x,t)=0$ uniformly on $\ol\Omega$ for $i_0+1\leq i\leq k$ by the second result of \eqref{4.15}. Thanks to Theorem \ref{th2.1}, $u_i(\cdot,t)$ is uniformly bounded in $C^{2+\alpha}(\ol\Omega)$ for all $t\geq 1$ and some $0<\alpha<1$, which leads to the convergence of $u_i$ $(i_0+1\leq i\leq k)$ in $C^2(\ol\Omega)$.  Hence, in the following, we just  prove the first equation of \eqref{4.26aa}.

Define a function $F: [0,\infty)\to \mathbb{R}$ by
$$F(t)=\dd\sum_{ i=1}^{i_0} \bigg({\varepsilon}_i \int_{\Om}\int_{u_i^*(x)}^{u_i(x,t)} u_i^*(x)\frac{s-u_i^*(x)}{s} {\rd}s{\rd}x\bigg),$$
where $\varepsilon_i>0$ will be determined latter. Then $F(t)\ge 0$. It follows from \eqref{2.6}, \eqref{2.8b1}, \eqref{4.1} and \eqref{4.17} that
\begin{equation*}
\begin{split}
\frac{dF}{dt}=&\dd\sum_{1\leq i\leq i_0}\bigg[\varepsilon_i\int_{\Om} u_i^* (u_i-u_i^*)\bigg(-\sum_{1\leq j\leq i_0}a_{ij}(u_j-u_j^*)-\sum_{i_0+1\leq j\leq k}a_{ij}u_j\bigg){\rd}x\bigg]\\[1mm]
&-\int_{\Om}\sum_{1\leq i\leq i_0}\varepsilon_id_iu_i^2\Big|\nabla\frac{u_i^*}{u_i}\Big|^2 {\rd}x\\[1mm]
\leq&\dd -\int_{\Om}\sum_{1\leq i,j\leq i_0}\varepsilon_iu_i^*a_{ij} (u_i-u_i^*)(u_j-u_j^*){\rd}x\\[1mm]
&+\dd \int_{\Om}\bigg(\sum_{1\leq i\leq i_0}\frac{(k-i_0)\delta}{2} (u_i-u_i^*)^2+ \sum_{1\leq i\leq i_0}\frac{(\var_i\bar c_i)^2}{2\delta}\sum_{i_0+1\leq j\leq k}(a_{ij}u_j)^2\bigg){\rd}x,
\end{split}
\end{equation*}
where $0<\delta\ll 1$. Remember that $a_{ii}=1$. By the similar discussion as in the proof of Theorem \ref{th4.6}, we can derive
\begin{equation*}
\begin{split}
\frac{dF}{dt}\leq& \dd -\frac{1}{2}\int_{\Om}{\mathbf{U}}_2 [Q(I_{i_0}-B_{i_0}-\mathbf{c}_2)+(I_{i_0}-B_{i_0}-\mathbf{c}_2)^TQ]{\mathbf{U}}_2^T{\rd}x\\[1mm]
&+\dd \int_{\Om}\bigg(\sum_{1\leq i\leq i_0}\frac{(k-i_0)\delta }{2} (u_i-u_i^*)^2+ \sum_{1\leq i\leq i_0}\frac{(\var_i\bar c_i)^2}{2\delta}\sum_{i_0+1\leq j\leq k}(a_{ij}u_j)^2\bigg){\rd}x,
\end{split}
\end{equation*}
where $\mathbf{c}_2$ is defined in \eqref{c2},
 \bess
 {\mathbf{U}}_2=(|u_1-u_1^*|,...,|u_{i_0}-u_{i_0}^*|),\;\;\;
 Q=\text{diag}(\var_1 \bar c_1,...,\var_{i_0} \bar c_{i_0}).
  \eess
Take $\varepsilon_i>0$ such that $Q=Q_6$ in $\mathbf{(G_2)}$. It then follows from the assumption $\mathbf{(G_2)}$ and \eqref{4.9} that there exists $\delta_1>2{(k-i_0)\delta}>0$ such that
\begin{align*}
\frac{dF}{dt}\leq&\dd \int_{\Om}\bigg(\sum_{1\leq i\leq i_0}\lf[-\frac{\delta_1}{2}+\frac{(k-i_0)\delta }{2}\rr] (u_i-u_i^*)^2+ \sum_{1\leq i\leq i_0}\frac{(\var_i\bar c_i)^2}{2\delta}\sum_{i_0+1\leq j\leq k}(a_{ij}u_j)^2\bigg){\rd}x\\
\leq &-\frac{\delta_1}{4}\sum_{1\leq i\leq i_0}\int_{\Om}(u_i-u_i^*)^2 {\rd}x+\dd\sum_{1\leq i\leq i_0}\frac{(\var_i\bar c_i)^2}{2\delta}\sum_{i_0+1\leq j\leq k}a^2_{ij}\int_{\Om}u_j^2{\rd}x=:\psi(t) + h(t).
\end{align*}
Making use of Theorem \ref{th2.1}, Lemma \ref{th2.2} and \eqref{4.16}, by the similar arguments  as  in the proof of Corollary  \ref{coro2.4} we can obtain \eqref{4.26aa}. The proof is finished.
\end{proof}

In the following, we concern with the case $i_0=2$ and investigate the gloabel stability of semi-trivial equilibrium solution  $\mathbf{u}_{2}^{*}:=(u_1^*,u_{2}^{*},0,...,0)$  under a weaker condition than $\mathbf{(G_2)}$  by using the results in \cite[Theorem 1.4]{hn2016}.

For the convenience of readers, we  briefly recall the global stability results in \cite[Theorem 1.4]{hn2016}.  As $m_i(x)>0$ on $\ol\Omega$, we can define
\bes\label{3.4}
\left\{\begin{array}{lll}
	L_1=L_{d_1,m_2,m_1}=\dd\inf_{d_1>0}\frac{\ol m_2}{\ol{\theta}_{d_1,m_1,1}},\ \ S_1=S_{d_1,m_2,m_1}=\dd\sup_{d_1>0}\sup_{\ol\Omega}
	\frac{m_2}{{\theta_{d_1,m_1,1}}},\\[4.5mm]
	L_2=L_{d_2,m_1,m_2}=\dd\inf_{d_2>0}\frac{\ol m_1}{\ol{\theta}_{d_2,m_2,1}},\ \ S_2=S_{d_2,m_1,m_2}=\dd\sup_{d_2>0}\sup_{\ol\Omega}\frac{m_1}{{\theta}_{d_2,m_2,1}},
\end{array}\rr.
\ees
where $\theta_{d_i,m_i,1}$ is the unique positive solution of \eqref{3a} with $d=d_i$,
$m=m_i$ and $\varphi=1$, and $\ol m_i$ and $\ol{\theta}_{d_i,m_i,1}$ are defined as following
\bess
\ol m_i=\int_{\Omega} m_i(x) {\rm d}x,\ \ol{\theta}_{d_i,m_i,1}=\int_{\Omega} \theta_{d_i,m_i,1}(x) {\rm d}x,\  \ \ i=1,2.
\eess
Obviously, $L_1, L_2, S_1, S_2\in[0,\yy]$.

\begin{theorem}\label{th3.1}{\rm (\hspace{-.1mm}\cite[Theorem 1.4]{hn2016})} Assume that $d_i, a_{ij}$, $i,j=1,2$, are all positive constants. If at least one of $m_1$ and $m_2$ is nonconstant,
	then we have
	\bes\label{3.4b}
	0\leq L_1L_2<1,\ L_1S_2>1,\ L_2S_1>1,
	\ees
	and the following conclusions  hold:

\begin{enumerate}
	
	\item[{\rm (i)}] If $a_{21}/a_{11}\geq S_1$ and ${a_{12}}/{a_{22}}\leq {1}/{S_1}$, then for all $d_1$, $d_2>0$, $(\theta_{d_1,m_1,a_{11}},0)$ is globally asymptotically stable.
	
	\item[{\rm (ii)}] If ${a_{12}}/{a_{22}}\geq {S_2}$ and $a_{21}/a_{11}\leq {1}/{S_2}$, then for all $d_1$,  $d_2>0$, $(0,\theta_{d_2,m_2,a_{22}})$ is globally asymptotically stable.
	
	\item[{\rm (iii)}] If $a_{21}/a_{11}<L_1$ and $a_{12}/a_{22}<L_2$, then for all $d_1$, $d_2>0$, the problem \eqref{4.1} has a unique positive equilibrium solution that is globally asymptotically stable.
\end{enumerate}
\end{theorem}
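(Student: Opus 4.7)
The plan is to combine three tools: principal eigenvalue analysis at each semi-trivial equilibrium, sharp estimates for the logistic steady state $\theta_{d,m,1}$, and the Hsu--Smith--Waltman trichotomy for competitive monotone semiflows (see \cite{hess1991,Smith}) applied to \eqref{4.1} with $k=2$. The global nature of the hypotheses (infima/suprema over $d_1,d_2$) forces every step to be carried out uniformly in the diffusion rates.

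First I would establish the three inequalities in \eqref{3.4b}. Testing the equation for $\theta:=\theta_{d,m,1}$ against the constant $1$ gives $\int_\Omega \theta m=\int_\Omega\theta^2$; testing it against $(1-\mu/\theta)$ with $\mu=\int_\Omega m/|\Omega|$ and integrating by parts yields $\int_\Omega(m-\theta)(\theta-\mu)=d\mu\int_\Omega|\nabla\theta|^2/\theta^2\ge 0$. Together these give the classical comparison $\int_\Omega\theta_{d,m,1}\ge\int_\Omega m$, strict whenever $m$ is nonconstant. The bound $L_1L_2<1$ follows directly. The sharper inequalities $L_1S_2>1$ and $L_2S_1>1$ come from combining the integral bound with the maximum-principle pointwise estimates $\min_\Omega m\le\theta_{d,m,1}\le\max_\Omega m$ (strict when $m$ is nonconstant), together with the observation that $d\mapsto\int_\Omega\theta_{d,m,1}$ returns to $\int_\Omega m$ as $d\to 0^+$ or $d\to\infty$, so its supremum is attained at some interior $d^*\in(0,\infty)$ and is strictly less than $|\Omega|\max_\Omega m$; a short computation then yields the desired strict inequalities.

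Next, for cases (i) and (ii) I would use two properties of the Neumann principal eigenvalue $\lambda_1(d,V)$ of $-d\Delta-V$: (a) $\lambda_1(d,V)\ge 0$ for every $d>0$ iff $\max_\Omega V\le 0$ (by the Rayleigh quotient and concentration of test functions); and (b) $\lambda_1(d,V)<0$ for every $d>0$ whenever $\int_\Omega V>0$, since $\lambda_1(d,V)$ is strictly increasing in $d$ with limit $-\int_\Omega V/|\Omega|$ as $d\to\infty$. Under (i), the hypothesis $a_{21}/a_{11}\ge S_1$ gives $m_2-(a_{21}/a_{11})\theta_{d_1,m_1,1}\le 0$ pointwise for every $d_1$, so (a) yields linear stability of $(\theta_{d_1,m_1,a_{11}},0)$ against invasion by species $2$ uniformly in $d_1,d_2$. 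Simultaneously, $a_{12}/a_{22}\le 1/S_1$ combined with $L_2S_1>1$ from \eqref{3.4b} gives $a_{12}/a_{22}<L_2$, so $\int_\Omega(m_1-(a_{12}/a_{22})\theta_{d_2,m_2,1})>0$ for every $d_2$, and (b) makes $(0,\theta_{d_2,m_2,a_{22}})$ linearly unstable. The competitive trichotomy then excludes both a positive equilibrium and bistability, so every nontrivial orbit converges to $(\theta_{d_1,m_1,a_{11}},0)$; case (ii) is symmetric.

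For case (iii), the assumptions $a_{21}/a_{11}<L_1$ and $a_{12}/a_{22}<L_2$ force $\int_\Omega V>0$ in both linearizations, so both semi-trivial equilibria are linearly unstable and the trichotomy produces at least one positive coexistence state $(u_1^*,u_2^*)$. The main obstacle is proving that this positive equilibrium is unique and globally attracting. Uniqueness I would obtain by a sub/supersolution sweeping argument in the competitive order, using the weak-competition inequality $a_{12}a_{21}<a_{11}a_{22}$ (which follows since $a_{12}a_{21}/(a_{11}a_{22})<L_1L_2<1$): two ordered positive equilibria would violate a sign identity derived by testing the difference of the equations. Global asymptotic stability of the unique coexistence state then follows from Hess's convergence theorem for strongly monotone, order-compact semiflows whose only positive equilibrium is locally asymptotically stable. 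The conceptual heart of the argument throughout is translating the $d$-uniform quantities $L_i,S_i$ into pointwise or integrated sign conditions on the linearized coefficients, which is exactly the role of the three inequalities collected in \eqref{3.4b}.
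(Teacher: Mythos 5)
This theorem is not proved in the paper at all: it is quoted verbatim from \cite[Theorem 1.4]{hn2016} and used purely as an imported tool in the final corollary of Section 4, so there is no in-paper proof to compare against. Judged on its own terms, your outline handles the soft ingredients correctly: the inequality $\int_\Omega\theta_{d,m,1}\,\rd x>\int_\Omega m\,\rd x$ for nonconstant $m$, the limiting behaviour of $\theta_{d,m,1}$ as $d\to0^+$ and $d\to\infty$, and the two monotonicity/limit facts about the Neumann principal eigenvalue are exactly what one needs for \eqref{3.4b} and for the $d$-uniform linear stability or instability of the semi-trivial equilibria.

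The gaps sit precisely where the real work of \cite{hn2016} lies. First, in cases (i) and (ii) the competitive trichotomy does \emph{not} ``exclude a positive equilibrium'': it only says that if no coexistence state exists and one semi-trivial equilibrium is unstable, then the other one attracts. Ruling out coexistence states requires a separate argument; in \cite{hn2016} it is a fixed-point-index count (the indices of $(0,0)$, $(\theta_{d_1,m_1,a_{11}},0)$, $(0,\theta_{d_2,m_2,a_{22}})$ and of all coexistence states must sum to $1$) combined with the hard lemma that \emph{every} coexistence state in the relevant parameter region is linearly stable and hence has index $1$. Second, and more seriously, your uniqueness argument in case (iii) invokes only the weak-competition inequality $a_{12}a_{21}<a_{11}a_{22}$. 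That cannot suffice: Lou \cite{lou2006} already showed that in the heterogeneous weak-competition regime the dynamics depend delicately on $d_1,d_2$ (a semi-trivial state can even become stable there), so the full strength of $a_{21}/a_{11}<L_1$ and $a_{12}/a_{22}<L_2$ must enter the uniqueness proof. Moreover the ``sign identity from testing the difference of the equations'' breaks down for nonconstant equilibria: dividing each equation by its solution, subtracting and integrating leaves the term $\int_\Omega\bigl(|\nabla u_1|^2/u_1^2-|\nabla u_2|^2/u_2^2\bigr)\rd x$, which has no sign even for an ordered pair. The actual route is again the linear stability of every coexistence state together with the monotone-dynamical-systems fact that a strongly order-preserving semiflow all of whose equilibria are stable has exactly one of them; that stability lemma is the technical heart of \cite{hn2016} and is absent from your proposal.
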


\begin{corollary} Let $i_0=2$,  $k\geq 3$,  and let $(u_1,...,u_k)$ be the solution of \eqref{4.1}.  Assume at least one of $m_1$ and $m_2$ is nonconstant in the problem \eqref{4.1},   $\mathbf{(G_1)}$ and one of $\mathbf{(F_{2})}$ or \eqref{4.9a} hold. Let $(u_1^*,u_2^*)$ be the positive solution of \eqref{4.14a} obtained by Lemma {\rm\ref{lemma4.7}}. If $a_{21}<L_1$, $a_{12}<L_2$, then
$\dd\lim_{t\to\yy}u_i(x,t)=u_i^*(x)$ for $i=1,2$ and $\dd\lim_{t\to\yy}u_j(x,t)=0$ for $3\leq j\leq k$ uniformly on $\ol\Omega$.
\end{corollary}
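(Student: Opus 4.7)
The strategy is to reduce the asymptotic dynamics to a two-species competition system via an $\omega$-limit set argument, and then apply Theorem~\ref{th3.1}(iii) on the reduced system. The hypotheses $(\mathbf{G_1})$ and either $(\mathbf{F_2})$ or \eqref{4.9a} (with $i_0=2$) place us squarely within the setting of Lemma~\ref{lemma4.7}.

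First I invoke Lemma~\ref{lemma4.7}(i). This gives $\underline c_i\leq \liminf_{t\to\infty}u_i(x,t)\leq \limsup_{t\to\infty}u_i(x,t)\leq \bar c_i$ for $i=1,2$ (with $\underline c_i>0$), together with $u_j(x,t)\to 0$ uniformly on $\ol\Omega$ for $3\leq j\leq k$. Theorem~\ref{th2.1} yields a uniform bound $\|u_i(\cdot,t)\|_{C^{2+\alpha}(\ol\Omega)}\leq C$ for all $t\geq 1$ and all $i$, so by an Arzel\`a--Ascoli argument (as in the proof of Corollary~\ref{coro2.4}) the convergence $u_j\to 0$ is in $C^2(\ol\Omega)$ for $3\leq j\leq k$. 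This settles the second half of the conclusion.

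Next consider the $\omega$-limit set $\omega$ of the trajectory $\mathbf{u}(\cdot,t)=(u_1(\cdot,t),\ldots,u_k(\cdot,t))$ in the $C^2(\ol\Omega)^k$ topology. By the uniform $C^{2+\alpha}$ estimates, $\omega$ is nonempty, compact, and invariant under the semiflow generated by \eqref{4.1}. Every $\mathbf{v}=(v_1,\ldots,v_k)\in\omega$ satisfies $v_j\equiv 0$ for $3\leq j\leq k$ and $\underline c_i\leq v_i\leq\bar c_i$ for $i=1,2$; in particular, $v_1,v_2>0$ on $\ol\Omega$. Pick any such $\mathbf{v}\in\omega$; by invariance, the solution of \eqref{4.1} starting at $\mathbf{v}$ lies in the invariant submanifold $\{u_j\equiv 0:3\leq j\leq k\}$, so its first two components solve the two-species system \eqref{3a1} with coefficients $d_1,d_2,m_1,m_2,a_{ij}$ ($i,j=1,2$) and strictly positive initial data. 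Since $a_{11}=a_{22}=1$ and the hypothesis $a_{21}<L_1$, $a_{12}<L_2$ is precisely the weak competition condition of Theorem~\ref{th3.1}(iii), the positive equilibrium $(u_1^*,u_2^*)$ is globally asymptotically stable for the two-species system. Hence this trajectory converges to $(u_1^*,u_2^*)$ in the first two components, which by invariance of $\omega$ forces $\omega=\{(u_1^*,u_2^*,0,\ldots,0)\}$. The compactness of $\omega$ in $C^2(\ol\Omega)^k$ then gives $u_i(\cdot,t)\to u_i^*$ uniformly on $\ol\Omega$ for $i=1,2$.

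The main obstacle is verifying the compactness and invariance of $\omega$ in $C^2(\ol\Omega)^k$; this is a standard consequence of the parabolic regularity in Theorem~\ref{th2.1} combined with continuous dependence of the semiflow on initial data, but must be checked carefully in the non-autonomous-looking system restricted to the submanifold $\{u_j=0:j\geq 3\}$. A secondary point is the strict positivity $\underline c_i>0$ for $i=1,2$: without the persistence statement of Lemma~\ref{lemma4.7}(i), the limiting two-species trajectory could in principle sit on a boundary equilibrium $(\theta_{d_1,m_1,1},0)$ or $(0,\theta_{d_2,m_2,1})$, and Theorem~\ref{th3.1}(iii) would not yield convergence to the interior equilibrium; the uniform lower bound $\underline c_i$ rules this out.
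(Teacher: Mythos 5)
Your reduction to the two\,-species flow on the invariant set $\{u_j\equiv 0,\ 3\leq j\leq k\}$ is a genuinely different route from the paper's, but the decisive step is not justified as written. From the invariance of the $\omega$-limit set and the fact that the forward orbit of each $\mathbf{v}\in\omega$ converges to $E:=(u_1^*,u_2^*,0,\ldots,0)$, you conclude ``by invariance of $\omega$'' that $\omega=\{E\}$. Invariance alone gives only $E\in\omega$; it does not exclude a larger compact invariant set all of whose points are forward\,-asymptotic to $E$ (the homoclinic\,-loop scenario is exactly of this type). What rules this out is the Lyapunov stability of $E$, and that requires an extra input you never supply: either (a) you work with the semiflow restricted to the invariant submanifold, note that Theorem \ref{th3.1}(iii) makes $(u_1^*,u_2^*)$ a locally asymptotically stable (indeed globally attracting) equilibrium there, observe that all of $\omega$ lies in its basin because of the lower bounds $\underline c_1,\underline c_2>0$ from \eqref{4.15}, and then invoke the standard theorem that a compact internally chain transitive set meeting the basin of a local attractor is contained in that attractor (Hirsch--Smith--Zhao; this is machinery the paper never uses and you do not cite); or (b) you prove stability of $E$ in the full $k$-species phase space, which requires a separate transversal argument for the directions $u_j$, $3\leq j\leq k$, based on \eqref{yy} ($m_j^+-a_{j1}\underline c_1-a_{j2}\underline c_2<0$), again absent from your proposal. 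The gap is fixable, but as stated the final identification $\omega=\{E\}$ does not follow. (The parts you flag as the ``main obstacle'' --- compactness and invariance of $\omega$ in $C^2$ --- are indeed routine from Theorem \ref{th2.1}, and your settling of the components $j\geq 3$ is fine.)

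For comparison, the paper avoids limit\,-set machinery altogether: after $u_j<\var$ for $t>T$ ($j\geq 3$), it squeezes $(u_1,u_2)$ between the solutions of the $\var$-perturbed two\,-species systems \eqref{4.21} via coupled upper--lower solutions, applies Theorem \ref{th3.1}(iii) to those perturbed systems (choosing $\var$ small so that \eqref{4.23} holds), obtains the sandwich \eqref{4.24} in terms of the perturbed coexistence states, and then lets $\var\to0^+$ using uniform Schauder bounds and the uniqueness assertion of Theorem \ref{th3.1}(iii) to identify both limits with $(u_1^*,u_2^*)$. That argument is longer but self\,-contained within comparison methods; your argument is conceptually shorter but stands or falls on the chain\,-transitivity/stability step above, so you should either import that theorem explicitly or prove transversal stability of $E$ from \eqref{yy}.
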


\begin{proof}
Thanks to the second result in \eqref{4.15}, we have that $\dd\lim_{t\to\yy}u_j(x,t)=0$ for $3\leq j\leq k$ uniformly on $\ol\Omega$. Then for any $0<\var \ll1$, there exists $T>0$ such that
\bes\label{4.19}
0\leq u_j(x,t)<\var, \ \ \forall\ x\in\ol\Omega,\; t>T,\; 3\leq j\leq k.
\ees
It follows from \eqref{4.1} and \eqref{4.19} that $u_1$ and $u_2$ satisfy
 \begin{equation}\label{4.20}
\begin{cases}
\dd \partial_t u_i\geq d_i\Delta u_i+u_i\bigg(m_i(x)-\sum_{3\leq j\leq k}a_{ij}\var-\sum_{1\leq j\leq 2} a_{ij}u_j\bigg), &x\in \Omega,\; t>T,\; i=1,2,\\[4mm]
\dd \partial_t u_i\leq d_i\Delta u_i+u_i\bigg(m_i(x)-\sum_{1\leq j\leq 2} a_{ij}u_j\bigg), \ \ &x\in \Omega,\; t>T,\;i=1,2.
\end{cases}
 \end{equation}
Denote by $(\bar u_1,\underline u_1,\bar u_2,\underline u_2)$ the solution of
 \begin{equation}\label{4.21}
\begin{cases}
\dd \partial_t \bar u_1=d_1\Delta \bar u_1+\bar u_1\big(m_1(x)-\bar u_1- a_{12}\underline u_2\big), & x\in \Omega,\; t>T,\\[4mm]
\dd \partial_t \underline u_2=d_2\Delta \underline u_2+\underline u_2\bigg(m_2(x)-\sum_{3\leq j\leq k}a_{2j}\var-\underline u_2- a_{21}\bar u_1\bigg), & x\in \Omega,\; t>T,\\[4mm]
\dd \partial_t \underline u_1=d_1\Delta \underline u_1+\underline u_1\bigg(m_1(x)-\sum_{3\leq j\leq k}a_{1j}\var-\underline u_1- a_{12}\bar u_2\bigg), & x\in \Omega,\; t>T,\\[4mm]
\dd \partial_t \bar u_2=d_2\Delta \bar u_2+\bar u_2\big(m_2(x)-\bar u_2- a_{21}\underline u_1\big), & x\in \Omega,\; t>T,\\[2mm]
\dd \partial_\nu\bar u_1=\partial_\nu\underline u_1=\dd \partial_\nu\bar u_2=\partial_\nu\underline u_2=0,& x\in \partial\Omega,\;
 t>T,\\
\bar u_1(x,0)=\underline u_1(x,0)=u_1(x,T),\ \bar u_2(x,0)=\underline u_2(x,0)=u_2(x,T),& x\in \Omega.
 \end{cases}
 \end{equation}
Remember $a_{ii}=1$. Owing to \eqref{4.1}, \eqref{4.20} and \eqref{4.21}, we see that $(\bar u_1,\bar u_2)$ and $(\underline u_1,\underline u_2)$ are { the coupled ordered} upper and lower solutions of the problem \eqref{4.1} with $k=2$ and
\begin{equation}\label{4.22}
\underline u_i(x,t)\leq u_i(x,t)\leq \bar u_i(x,t),\ \ \forall\;x\in\ol\Omega,\;t>0,\; i=1,2.
\end{equation}
Since $a_{21}<L_1$ and $a_{12}<L_2$, we can choose $\var>0$ small enough such that
\begin{equation}\label{4.23}
\begin{split}
a_{21}<L_{d_1,m_1,m_2-\var_{2}},\ a_{12}<L_{d_2,m_2-\var_{2},m_1},\  a_{21}<L_{d_1,m_1-\var_{1},m_2},\ a_{12}<L_{d_2,m_2,m_1-\var_{1}},
\end{split}
\end{equation}
where $\var_{1}=\sum_{3\leq j\leq k}a_{1j}\var$, $\var_{2}=\sum_{3\leq j\leq k}a_{2j}\var$, and
 $$L_{d_1,m_1,m_2-\var_{2}},\ \ L_{d_2,m_2-\var_{2},m_1},\ \  L_{d_1,m_1-\var_{1},m_2}, \ \ L_{d_2,m_2,m_1-\var_{1}}$$
are defined as in \eqref{3.4}.
Then making use of Theorem \ref{th3.1} and \eqref{4.23}, we have
\begin{equation}\label{4.23aa}
\begin{cases}
 \dd\lim_{t\to\yy}\bar u_1(x,t)=\phi_{m_1,m_2-\var_{2}},\ \ \ \lim_{t\to\yy}\underline u_2(x,t)=\psi_{m_1,m_2-\var_{2}},\\[1mm]
\dd\lim_{t\to\yy}\underline u_1(x,t)=\phi_{m_1-\var_{1},m_2},\ \ \
 \lim_{t\to\yy}\bar u_2(x,t)=\psi_{m_1-\var_{1},m_2}
\end{cases}
\end{equation}
uniformly on $\ol\Omega$, where $(\phi_{f,g},\psi_{f,g})=(\phi,\psi)$ is a positive solution of
\begin{equation}\label{4.25}
\begin{cases}
d_1\Delta\phi+\phi\big(f-\phi- a_{12}\psi\big)=0, & x\in \Omega,\\[.5mm]
\dd d_2\Delta\psi+\psi\big(g-\psi-a_{21}\phi\big)=0, & x\in \Omega,\\[1mm]
\dd \partial_\nu\phi=\partial_\nu\psi=0,& x\in\partial\Omega.
\end{cases}
 \end{equation}
Combining \eqref{4.22} with \eqref{4.23aa} we get
 \begin{equation}\label{4.24}
\begin{cases}
 \dd\phi_{m_1-\var_{1},m_2}\leq\liminf_{t\to\yy}u_1(x,t)\leq\limsup_{t\to\yy}u_1(x,t)\leq \phi_{m_1,m_2-\var_{2}},\\[1mm]
 \dd\psi_{m_1,m_2-\var_{2}}\leq\liminf_{t\to\yy}u_2(x,t)\leq\limsup_{t\to\yy}u_2(x,t)\leq \psi_{m_1-\var_{1},m_2},
\end{cases}
\end{equation}
where $\var_{1}=\sum_{3\leq j\leq k}a_{1j}\var$ and $\var_{2}=\sum_{3\leq j\leq k}a_{2j}\var$.  Using the Schauder theory for elliptic equations we can show that $\phi_{m_1-\var_{1},m_2}$, $\psi_{m_1-\var_{1},m_2}$, $\phi_{m_1,m_2-\var_{2}}$ and $\psi_{m_1,m_2-\var_{2}}$ are uniformly bounded for $0<\var\ll 1$ in $C^{2+\alpha}({\ol\Omega})$. Passing to a subsequence of $\var$ if necessary, we may assume
\begin{equation}\label{4.26}
\begin{cases}
 \dd\lim_{\var\to 0^+}\phi_{m_1-\var_{1},m_2}=\phi^{(1)}_{m_1,m_2},\ \ \ \lim_{\var\to 0^+}\psi_{m_1-\var_{1},m_2}=\psi^{(1)}_{m_1,m_2},\\[1mm]
 \dd\lim_{\var\to 0^+}\phi_{m_1,m_2-\var_{2}}=\phi^{(2)}_{m_1,m_2},\ \ \ \lim_{\var\to 0^+}\psi_{m_1,m_2-\var_{2}}=\psi^{(2)}_{m_1,m_2},
\end{cases}
\end{equation}
where $(\phi^{(1)}_{m_1,m_2},\psi^{(1)}_{m_1,m_2})$ and $(\phi^{(2)}_{m_1,m_2},\psi^{(2)}_{m_1,m_2})$ satisfy \eqref{4.25} with $f=m_1$ and $g=m_2$. Since $a_{21}<L_1$ and $a_{12}<L_2$, making use of Theorem \ref{th3.1}, we conclude that $\phi^{(1)}_{m_1,m_2}=\phi^{(2)}_{m_1,m_2}=u_1^*$ and $\psi^{(1)}_{m_1,m_2}=\psi^{(2)}_{m_1,m_2}=u_2^*$. Thus, by  \eqref{4.26},
\bess
\lim_{\var\to 0^+}\phi_{m_1-\var_{1},m_2}=\lim_{\var\to 0^+}\phi_{m_1,m_2-\var_{2}}=u_1^*,\ \ \ \lim_{\var\to 0^+}\psi_{m_1-\var_{1},m_2}=\lim_{\var\to 0^+}\psi_{m_1,m_2-\var_{2}}=u_2^*.
\eess
Combining this with \eqref{4.24},  then the arbitrariness of $\var>0$ yields $\dd\lim_{t\to\yy}u_i(x,t)=u_i^*(x)$ uniformly on $\ol\Omega$ for $i=1,2$. The proof is finished.
\end{proof}

\begin{footnotesize}
	\bibliographystyle{plain}
	\bibliography{NSW}
\end{footnotesize}
\end{document}